\newcommand\tmu{\tilde{\mu}}
\newcommand\Xip{\Xi_{\Phi}}
\newcommand\NN{\mathbb{N}}
\newcommand\RR{\mathbb{R}}
\newcommand\AL{\mathcal{A}}
\newcommand\MM{\mathcal{M}}
\newcommand\GG{\mathcal{G}}
\newcommand\II{\mathcal{I}}
\newcommand\LL{\mathcal{L}}
\newcommand\OO{\mathcal{O}}
\newcommand\UU{\mathcal{U}}
\newcommand\Zt{\tilde{Z}}
\newcommand{\nlt}{\not\kern-0.4em{\lhd_{\mathfrak{t}}}}
\newcommand{\diam}{\text{diam}}
\newcommand{\infom}[1][]{I_{\mu}}
\newcommand{\dom}{\text{dom}}
\newcommand*\om[0]{\omega}
\DeclareMathOperator*{\osc}{osc}
\DeclareMathOperator*{\HD}{HD}
\DeclareMathOperator*\sgn{sgn}
\definecolor{MyGreen}{RGB}{36, 120, 35}
\newtheorem{theorem}{Theorem}[section]
\numberwithin{theorem}{subsection}
\newtheorem{definition}[theorem]{Definition}
\newtheorem{corollary}[theorem]{Corollary}
\newtheorem{example}[theorem]{Example}
\newtheorem{lemma}[theorem]{Lemma}
\newtheorem{remark}[theorem]{Remark}
\newtheorem{proposition}[theorem]{Proposition}
\newenvironment{customprop}[1]
  {\innercustomprop}
  {\endinnercustomthm}
\newenvironment{customtheorem}[1]
  {\innercustomthm}
  {\endinnercustomthm}
\numberwithin{equation}{section}
\newcommand{\address}{}
\providecommand{\phantomsection}{}
\newcommand{\mylabel}[2]{\raisebox{.7\normalbaselineskip}{\phantomsection}(#1)
	\def\@currentlabel{#1}\textlabel{#2}}
\newcommand\xlabel[2][]{\phantomsection\def\@currentlabelname{#1}\label{#2}}
\newcommand\blfootnote[1]{%
  \begingroup
  \renewcommand\thefootnote{}\footnote{#1}%
  \addtocounter{footnote}{-1}%
  \endgroup
}
\renewcommand{\address}{Address: Department of Mathematics, University of North Texas, 1155 Union Circle \#311430, Denton, TX 76203-5017, USA; E-mail: NathanDalaklis@my.unt.edu}
\title{Multifractal Analysis of F-Exponents for Finitely Irreducible Conformal Graph Directed Markov Systems}
\author{Nathan Dalaklis \\University of North Texas \footnote{\address}}
\pgfplotsset{compat=1.18}
\begin{document}

\maketitle
\blfootnote{\textit{2020 Mathematics Subject Classification.} Primary: 37D35, 28A80, 28D20. Secondary: 37A50, 37A05}
\blfootnote{\textit{Keywords.} Conformal dynamics, Multifractal analysis, Birkoff average, Ergodic theory, Ergodic optimization}

\begin{abstract}
Consider a conformal graph directed Markov system (CGDMS) with a finitely irreducible symbolic representation over a countable alphabet and its corresponding limit set. Under a mild condition on the system, we give a multifractal analysis of level sets of Birkhoff averages with respect to Hausdorff dimension for a large family of functions. In the process of the development of this multifractal theory for finitely irreducible CGDMS, we repair, extend, and generalize previous analysis done for the Gauss Map.  We conclude with application of these results to a few examples in the case of both finite and countably infinite alphabets.

\end{abstract}

\tableofcontents

\section{Introduction}

The study of Birkhoff averages of a $\mu$-measure preserving dynamical system may be viewed as the study of the exceptional sets to G.D. Birkhoff's Ergodic Theorem from 1931 \cite{birkhoffProofErgodicTheorem1931} with respect to $\mu$. A short and elegant proof of Birkhoff's Ergodic Theorem is given by Katok and Hasselblatt \cite[Theorem 4.1.2]{katokIntroductionModernTheory1995}. Versions of the statement can also be found in \cite[Theorem 8.2.11, Theorem 8.2.12, and Corollary 8.2.14]{urbanskiVolumeErgodicTheory2021}. Given a dynamical system $T:X\to X$ with limit set $J$ and $g:X\to \RR$ a H\"older potential or a bounded observable, a multifractal consideration of these averages concerns itself with sets of the form 
\[
    J(\xi) = \left\{x\in J \;\bigg|\; \lim_{n\to\infty}\frac{1}{n}\sum_{k=0}^{n-1} g ((T^{n}(x))= \xi \right\}
\]
and
\[
    J' = \left\{x\in J\;\bigg|\; \lim_{n\to\infty}\frac{1}{n}\sum_{k=0}^{n-1} g (T^n(x)) \text{ does not exist}\right\}
\]
 where $\xi\in (\xi_{\min}(g),\xi_{\max}(g))$ with one or both of $\xi_{\min}(g)$ and $\xi_{\max}(g)$ are potentially infinite. When collected together these level sets for which $J(\xi)\neq\emptyset$ along with the exceptional set form a partition of the limit set $J$ called the multifractal decomposition of $J$ by Birkhoff averages with respect to $g$. The resulting spectrum function $t: (\xi_{\min}(g),\xi_{\max}(g))\to [0,HD(J)]$ defined by 
\[
    t(\xi) = HD(J(\xi)) 
\]
is then studied. Such multifractal analyses have been preformed for a variety of particular maps and general scenarios. See \cite{barreiraFrequencyDigitsLuroth2009, fanKhintchineExponentsLyapunov2009, iommiMultifractalAnalysisBirkhoff2015, johanssonMultifractalAnalysisNonuniformly2010, pesinMultifractalAnalysisBirkhoff2001, rushMultifractalAnalysisMarkov2023} and related multifractal analyses in \cite{fanFrequencyPartialQuotients2010, hanusThermodynamicFormalismMultifractal2002, kessebohmerMultifractalAnalysisSternBrocot2007}. 

These multifractal analyses describe the geometric structure of Birkhoff averages by way of Hausdorff dimension. Such problems go back to Besicovitch \cite{besicovitchSumDigitsReal1935} who looked at base $2$ expansions for which the corresponding dynamical system is the doubling map $T:[0,1]\to[0,1]$. In this case, one may remove the dyadic rationals from the analysis as they have Hausdorff Dimension $0$, and so, we may think of $J$ as the set of dyadic irrationals. A similar conversion to the dynamical systems setting where a negligible set may be removed from the analysis can be formulated for other number theoretic maps like other whole-number-base expansions, L\"uroth expansions, and complex continued fractions among others. 

The analysis here collects together many of these ideas by studying these Birkhoff decompositions in the general setting of finitely irreducible conformal graph directed Markov systems (CGDMS) under certain properties. Though initially defined in \cite{mauldinDimensionsMeasuresInfinite1996} and further developed in \cite{mauldinGraphDirectedMarkov2003}, an updated (and quite detailed) introduction to such systems can be found in \cite{urbanskiVolumeFinerThermodynamic2022}. 
\begin{figure}
    \centering
    \includegraphics{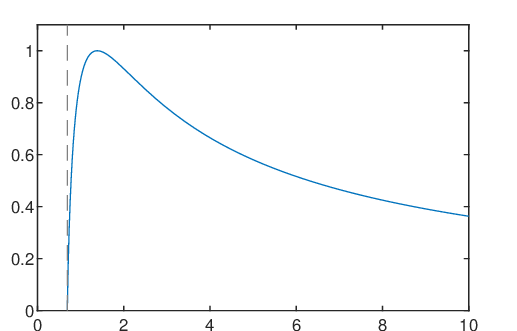}
    \caption{Graph of $\HD(J(\xi))$ for Example \ref{ex:Lu}.}
    \label{fig:Lu0To101}
    \end{figure}

\subsection{Main Results}
 
 The multifractal theory that follows generalizes, extends, and develops the method used by Fan et al. \cite{fanKhintchineExponentsLyapunov2009} which preformed the analysis for the family $-F = \{\log e\}_{e\in\mathbb{N}}$ and for which the corresponding exponent is the Khintchine exponent. The case of the Lyapunov exponent is addressed as well, but there are a few non-fatal issues with the argument which we repair in our general theory. Kesseb\"ohmer and Stratmann preformed such a multifractal analysis of the Gauss map prior to Fan et al. in \cite{kessebohmerMultifractalAnalysisSternBrocot2007}. The interested reader may also look to \cite{iommiMultifractalAnalysisBirkhoff2015} for further motivation for our approach. Our main theorem is as follows. (c.f \cite[Propositions 6.3 and 4.13, Theorems 1.2 and 1.3]{fanKhintchineExponentsLyapunov2009})

\begin{theorem}\label{thm:NDThm1}
Let $\Phi = \{\phi_e\}_{e\in\NN}$ be a cofinitely regular finitely irreducible CGDMS satisfying the SOSC. Let $F$ be a strictly positive H\"older family of potentials that are either comparable to $\log\Phi'$ (see  Lemma \ref{lem:Dopen}) or bounded. Let $D$ be the Manhattan region of the associated pressure function $P(t,q)$, and $(t,q)\in D$. If the amalgamated functions $\tilde{f}$ and $\Xip$ of $F$ and $\log\Phi'$ respectively are such that 
\begin{equation}
    \int_{E_A^{\infty}}(|\tilde{f}|+|\Xip|)d\tmu_{t,q}<\infty,
\end{equation}
where $\tmu_{t,q}$ is the unique shift invariant Borel probability measure equivalent to the unique $t\Xip+q\tilde{f}$-conformal probability measure $\tilde{m}_{t,q}$ on $E_A^{\infty}$ such that $\LL^{*}_{f_{t,q}}\tilde{m}_{t,q} = e^{P(t,q)}\tilde{m}_{t,q}$ (defined in \cite[Chapter 17.6]{urbanskiVolumeFinerThermodynamic2022}), then:
\begin{enumerate}
    \item The system of equations 
\begin{equation}
    \begin{cases}P(t,q) = q\xi \\ \frac{\partial P}{\partial q}(t,q) = \xi \end{cases} 
\end{equation}
has a unique solution $(t(\xi),q(\xi))\in D_0 = \{(t,q)\in D\;|\; 0\leq t\leq h\}$ for $\xi\in (\xi_{min},\infty)$ and $h$ the Bowen parameter of $\Phi$.
    \item $t(\xi)$ and $q(\xi)$ are real analytic.
    \item $t(\xi) = \HD(J(\xi))$. 
    \item $t(\xi) = \HD (J_1(\xi))$.
\end{enumerate}
\end{theorem}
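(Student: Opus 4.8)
The plan is to obtain items (1) and (2) from the analytic implicit function theorem applied to the pressure $P(t,q)$, and items (3) and (4) from a matching pair of dimension estimates built on the equilibrium and conformal measures $\tmu_{t,q}$ and $\tilde{m}_{t,q}$. For (1) and (2), recall that on the interior of the Manhattan region $D$ the function $P(t,q)$ is real-analytic, strictly decreasing in $t$ (since $\Xip<0$ gives $\partial_t P=\int\Xip\,d\tmu_{t,q}<0$), and strictly convex in $q$ whenever $\tilde{f}$ is not cohomologous to a constant, so that $\partial_q^2 P>0$ and $\partial_q P(t,q)=\int\tilde{f}\,d\tmu_{t,q}$ is strictly increasing in $q$. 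Substituting the tangency equation $\partial_q P=\xi$ into the value equation reduces the system to the single equation $G(t,q):=P(t,q)-q\,\partial_q P(t,q)=0$. I would then apply the implicit function theorem to the map $(t,q)\mapsto(G(t,q),\ \partial_q P(t,q))$; a direct computation shows its Jacobian determinant equals $\partial_t P\cdot\partial_q^2 P$, which is nonzero (indeed negative) on the interior of $D$. This yields a local real-analytic solution $\xi\mapsto(t(\xi),q(\xi))$, giving local uniqueness and item (2) at once. Promoting this to a global statement over $\xi\in(\xi_{\min},\infty)$ and confining the solution to $D_0=\{0\le t\le h\}$ is then a matter of continuation together with the boundary and asymptotic behaviour of $P$ along $\partial D$ and at $t=0,h$, while monotonicity of $\partial_q P$ in $q$ supplies global uniqueness.

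For the lower bound in (3), the natural candidate is the shift-invariant equilibrium measure $\tmu:=\tmu_{t(\xi),q(\xi)}$. The integrability hypothesis guarantees $\tmu$ is well-defined with finite Lyapunov exponent $\lambda_{\tmu}=-\int\Xip\,d\tmu$ and finite $\int\tilde{f}\,d\tmu$. The tangency equation gives $\int\tilde{f}\,d\tmu=\partial_q P=\xi$, so by the Birkhoff ergodic theorem $\tmu$-almost every point has $\tilde{f}$-average equal to $\xi$; after transporting by the coding map $\pi$, the measure $\tmu$ is carried by $J(\xi)$. Feeding the two equations into the variational principle $P(t,q)=h_{\tmu}+t\int\Xip\,d\tmu+q\int\tilde{f}\,d\tmu$ collapses it to $h_{\tmu}=t\,\lambda_{\tmu}$, so the volume lemma gives $\HD(\tmu)=h_{\tmu}/\lambda_{\tmu}=t(\xi)$, whence $\HD(J(\xi))\ge t(\xi)$.

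The upper bound in (3) is where I expect the main difficulty, precisely because the alphabet is countable and $E_A^{\infty}$ is noncompact. The idea is to cover the part of $J(\xi)$ on which $S_n\tilde{f}\approx n\xi$ (with $S_n$ the Birkhoff sum) by cylinders and to estimate the $s$-dimensional Hausdorff sum $\sum_\omega\diam(\phi_\omega)^s$ for $s>t(\xi)$, showing it tends to $0$. Using the Gibbs property of $\tilde{m}_{t,q}$ and conformality, a rank-$n$ cylinder satisfies $\tilde{m}_{t,q}([\omega])\asymp e^{S_n(t\Xip+q\tilde{f})(\omega)-nP(t,q)}$ and $\diam(\phi_\omega)\asymp e^{S_n\Xip(\omega)}$; on the relevant cylinders the cancellation $S_n\tilde{f}\approx n\xi$ and $P=q\xi$ removes the $q$-contribution, leaving $\tilde{m}_{t,q}([\omega])\asymp\diam(\phi_\omega)^{t(\xi)}$, so that $\tilde{m}_{t,q}$ behaves like a $t(\xi)$-dimensional measure along $J(\xi)$. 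Converting this into $\HD(J(\xi))\le t(\xi)$ requires controlling the escape of mass to infinity: I would exhaust the system by (co)finite subsystems, use cofinite regularity and the integrability hypothesis to bound the contribution of large-index cylinders, and pass to the limit, as in the approach of Fan et al.\ that the present framework repairs. This tail control is the technical crux.

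Finally, for (4) I would exploit the relationship between $J_1(\xi)$ and $J(\xi)$ fixed by their definitions. The lower bound is immediate from (3) via the inclusion relating the two sets, since $\tmu$ already witnesses dimension $t(\xi)$. For the matching upper bound I would re-run the covering estimate of the previous paragraph with the Birkhoff control appropriate to $J_1(\xi)$; because the cancellation producing $\tilde{m}_{t,q}([\omega])\asymp\diam(\phi_\omega)^{t(\xi)}$ uses only the asymptotic behaviour of the averages, the same tail estimate delivers $\HD(J_1(\xi))\le t(\xi)$, completing the identification.
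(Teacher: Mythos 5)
Your reduction of items (1)--(2) to the implicit function theorem reproduces the paper's analyticity step (the paper applies the IFT to the map $(P(t,q)-q\xi,\ \tfrac{\partial P}{\partial q}(t,q)-\xi)$ and computes the same Jacobian determinant $\tfrac{\partial P}{\partial t}\cdot\tfrac{\partial^2 P}{\partial q^2}<0$), but your treatment of \emph{existence over the whole range} $\xi\in(\xi_{\min},\infty)$ and of confinement to $D_0$ --- ``a matter of continuation together with the boundary and asymptotic behaviour of $P$'' --- skips what is actually the hard content of the theorem. To know that for every fixed $t$ the tangency equation $\tfrac{\partial P}{\partial q}(t,q)=\xi$ is solvable for every $\xi$ in $(\xi_{\min},\infty)$, one needs two nontrivial facts: that $\tfrac{\partial P}{\partial q}(t,q)\to\infty$ as $(t,q)$ approaches $\partial D$ (Lemma \ref{lem:PDBoundaryLimits}), and that $\lim_{q\to-\infty}\tfrac{\partial P}{\partial q}(t,q)=\xi_{\min}$ \emph{independently of} $t$. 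The latter is precisely the zero-temperature-limit analysis of Propositions \ref{MainResult2} and \ref{MainResult3} (tightness of the Gibbs states $\tmu_{t,q}$, Prohorov's theorem, and the identification of the accumulation point as an $\tilde{f}$-minimizing measure); nothing in your proposal supplies it or even flags it. Likewise, pinning the solution in $t\in(0,h]$ is done in the paper by an intermediate value argument on $W(t,\xi)=P(t,q(t,\xi))-\xi q(t,\xi)$, which requires proving $W(0,\xi)>0$ (via summability, which forces $q(0,\xi)<0$) and $W(h,\xi)\le 0$ (via convexity and the definition of the Bowen parameter); these endpoint evaluations do not follow from local continuation.

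For item (3) your lower bound is exactly the paper's (variational principle collapses to $h_{\tmu}=t(\xi)\chi(\tmu)$, then the volume lemma), but your upper bound misidentifies the crux and proposes a strategy that would not work. The paper needs no exhaustion by finite or cofinite subsystems and no separate tail control: it fixes $t>t(\xi)$, chooses $\varepsilon_0>0$ so that $P(t,q(\xi))-q(\xi)(\xi\mp\varepsilon_0)<0$, covers $J(\xi)$ by the level-$n$ cylinders on which the Birkhoff average is $\varepsilon_0$-close to $\xi$, and bounds
\begin{equation*}
\sum_{n\ge 1}\sum_{K\in\II(n,\xi,\varepsilon_0)}\diam(\pi(K))^{t}
\;\le\; C\sum_{n\ge 1}e^{n(P(t,q(\xi))-q(\xi)(\xi\mp\varepsilon_0))}\sum_{K}\tmu_{t,q(\xi)}(K)\;<\;\infty,
\end{equation*}
where the inner sum of cylinder measures is at most $1$; the Gibbs property alone absorbs the infinite alphabet, and the geometric series does the rest. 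By contrast, approximating by subsystems cannot give an upper bound, since $J(\xi)$ is not contained in (nor exhausted by) the limit sets of finite subsystems --- its points generically use infinitely many letters. Finally, in item (4) you have the two bounds backwards: the inclusion $J_1(\xi)\subseteq J(\xi)$ gives the \emph{upper} bound for free, while the \emph{lower} bound is what needs an argument, namely that $\mu_{t(\xi),q(\xi)}(J_{>1})=0$, which the paper obtains from the SOSC (incomparable cylinders have null overlap), so that $\mu_{t(\xi),q(\xi)}(J_1(\xi))=1$ and the measure-theoretic lower bound transfers to $J_1(\xi)$. Your plan to ``re-run the covering estimate'' for an upper bound on $J_1(\xi)$ is superfluous, and your claimed lower bound ``via the inclusion'' is invalid as stated, since a subset may well have strictly smaller dimension.
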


In order to address such a general theorem, we will first concern ourselves with the properties of limiting equilibrium states analogous to the investigation of zero-temperature limits in \cite{jenkinsonZeroTemperatureLimits2005}. The following propositions generalize \cite{jenkinsonZeroTemperatureLimits2005} to the setting of multivariate families of equilibrium states and repairs the analysis of Fan et al. in \cite{fanKhintchineExponentsLyapunov2009}. They are also applicable to our setting. 

\begin{proposition}[c.f. \cite{jenkinsonZeroTemperatureLimits2005} Lemma 2] \label{MainResult2} Fix $0\leq t\leq h$ and let $q\in (-\infty,q_1)\subsetneq(-\infty,-1)$ such that $(0,q_1)\in D$. Then the families of Gibbs states $\{\{\tmu_{t,q}\}_{q< q_1}\}_{0\leq t\leq h}$ are tight, that is for all $\varepsilon>0$ there exists $K\subseteq E_A^{\infty}$ compact such that $\tmu_{t,q}(K)>1-\varepsilon$ for all $q<q_1$.
\end{proposition}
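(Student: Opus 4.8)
The plan is to characterize the compact subsets of $E_A^\infty$ and reduce tightness to a single uniform moment bound. Recall that a subset of $E_A^\infty\subseteq\NN^{\NN}$ is precompact exactly when, for each coordinate, only finitely many letters occur; by Tychonoff a closed set contained in a product of finite letter-sets is compact. I would first dispose of the trivial case: if the alphabet is finite then $E_A^\infty$ is itself compact and there is nothing to prove. So assume the alphabet is $\NN$. In that case the hypothesis $(0,q_1)\in D$ rules out the bounded alternative for $F$: for a finitely irreducible system $P(0,q_1)<\infty$ forces $\sum_e\exp(\sup_{[e]}q_1\tilde f)<\infty$, which is impossible when $\tilde f$ is bounded and positive over an infinite alphabet. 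Hence $\tilde f$ is comparable to $\log\Phi'$, and in particular $\tilde f$ is \emph{coercive}: $\inf_{[e]}\tilde f\to\infty$ as $e\to\infty$, so for every $R>0$ the set $F_R=\{e\in\NN:\inf_{[e]}\tilde f\le R\}$ is finite.

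Next I would build the candidate compact set from sublevel sets of $\tilde f$ along orbits. Fix $\varepsilon>0$, pick a sequence $R_j\to\infty$ to be chosen, and set
\[
K=\left\{\omega\in E_A^\infty:\ \tilde f(\sigma^j\omega)\le R_j\ \text{ for all } j\ge 0\right\}.
\]
Since $\tilde f(\sigma^j\omega)$ controls the letter $\omega_{j+1}$ through coercivity, $\omega\in K$ forces $\omega_{j+1}\in F_{R_j}$; thus $K$ is a closed subset of $\prod_{j\ge0}F_{R_j}$ and hence compact. Using shift-invariance of $\tmu_{t,q}$ followed by Markov's inequality (valid as $\tilde f>0$),
\[
\tmu_{t,q}(E_A^\infty\setminus K)\le\sum_{j\ge0}\tmu_{t,q}\big(\tilde f\circ\sigma^j>R_j\big)=\sum_{j\ge0}\tmu_{t,q}\big(\tilde f>R_j\big)\le\sum_{j\ge0}\frac{1}{R_j}\int_{E_A^\infty}\tilde f\,d\tmu_{t,q}.
\]
If I can produce a constant $M<\infty$ with $\int\tilde f\,d\tmu_{t,q}\le M$ for \emph{all} $0\le t\le h$ and $q<q_1$, then choosing $R_j=2^{j+1}M/\varepsilon$ gives $\tmu_{t,q}(E_A^\infty\setminus K)\le\varepsilon$ with $K$ independent of $(t,q)$, which is exactly tightness.

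The remaining and central task is therefore the uniform bound on $\int\tilde f\,d\tmu_{t,q}$ over the unbounded parameter range, and this is where the thermodynamic formalism enters. I would invoke the standard derivative formula $\partial_q P(t,q)=\int\tilde f\,d\tmu_{t,q}$ on the interior of $D$, together with convexity of $q\mapsto P(t,q)$, so that $\partial_q P(t,\cdot)$ is nondecreasing; hence for every $q<q_1$,
\[
\int_{E_A^\infty}\tilde f\,d\tmu_{t,q}=\partial_q P(t,q)\le\partial_q P(t,q_1)=\int_{E_A^\infty}\tilde f\,d\tmu_{t,q_1}.
\]
The right-hand side is finite by the integrability hypothesis at the reference point $(t,q_1)$ and, as a function of $t$, is continuous on the compact interval $[0,h]$ (using real analyticity of $P$ on $D$), so $M:=\sup_{0\le t\le h}\int\tilde f\,d\tmu_{t,q_1}<\infty$. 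I expect the main obstacle to be precisely this step: controlling the measures uniformly as $q\to-\infty$, where $P(t,q)\to-\infty$ and naive Gibbs-constant estimates degenerate. The convexity/monotonicity argument above is what tames it, converting the limiting behavior into the harmless one-sided comparison with the fixed slice $q=q_1$, with positivity of $\tilde f$ (hence $\partial_q P>0$) pinning the integrals into $[0,M]$. Care must also be taken that the derivative formula and differentiability of $P$ are legitimately available at and near $(t,q_1)$, which follows from finite irreducibility and $(0,q_1)\in D$.
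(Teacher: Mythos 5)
Your compact-set construction and the union bound via shift invariance are sound (and essentially the same skeleton as the paper's proof, which uses the coordinate-wise sets $K=\{x: 1\le x_k\le a_k\}$), but the whole argument hinges on the uniform moment bound $M:=\sup\{\int\tilde f\,d\tmu_{t,q}\;:\;0\le t\le h,\ q<q_1\}<\infty$, and this step is a genuine gap: such an $M$ need not exist under the hypotheses of Proposition \ref{MainResult2}. Convexity only identifies the supremum over $q<q_1$ with the \emph{left} derivative of $q\mapsto P(t,q)$ at the endpoint $q_1$, and a convex function which is finite on $(-\infty,q_1]$ but on no right-neighborhood of $q_1$ can have left derivative $+\infty$ there. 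This actually happens here. Take the full shift over $\NN$ (e.g.\ the linearized Gauss system) and the locally constant, strictly positive H\"older family $f_e\equiv a_e:=|q_1|^{-1}\left(\log e+2\log\log(e+2)\right)+1$. Then $\sum_e e^{q_1a_e}<\infty$, so $(0,q_1)\in D$, while $\sum_e a_ee^{q_1a_e}=\infty$. For $t=0$ the Gibbs state $\tmu_{0,q}$ is the Bernoulli measure with weights $e^{qa_e}/\sum_{e'}e^{qa_{e'}}$, hence by monotone convergence
\[
\int_{E_A^{\infty}}\tilde f\,d\tmu_{0,q}=\frac{\sum_e a_ee^{qa_e}}{\sum_e e^{qa_e}}\longrightarrow\infty \qquad \text{as } q\uparrow q_1,
\]
so no finite $M$ exists, even though this family \emph{is} tight (the paper's proof covers exactly such examples). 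Uniform first moments are strictly stronger than tightness, so your reduction replaces the proposition by a false statement. Two auxiliary claims fail for the same reason: the ``integrability hypothesis at the reference point $(t,q_1)$'' is not among the hypotheses of this proposition (only $(0,q_1)\in D$ is assumed, and $(t,q_1)\in D$ does not imply $\int\tilde f\,d\tmu_{t,q_1}<\infty$ --- that is precisely what the example refutes), and the identity $\frac{\partial P}{\partial q}(t,q)=\int\tilde f\,d\tmu_{t,q}$ needs either $(t,q)$ interior to $D$ or the integrability assumption of Lemma \ref{lem:DerivativesOfPressure}, neither of which comes for free on the slice $t=0$.

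The repair is to bound individual cylinder masses rather than first moments, which is what the paper does: combine the Gibbs inequality $\tmu_{t,q}([a])\le e^{4V(f_{t,q})}\exp\left(\sup\{f_{t,q}\vert_{[a]}\}-P(t,q)\right)$ with the variational-principle estimate $P(t,q)\ge\frac{q}{q_1}\int f_{t,q_1}\,dm$ (for one fixed measure $m$ supported on a periodic orbit) and the scaling identity $kf_{t,q}=f_{kt,kq}$ with $k=q/q_1\ge 1$, so that everything is expressed through the single summable potential $f_{0,q_1}$; this yields $\tmu_{t,q}([a])\le C\exp\left(\sup\{f_{0,q_1}\vert_{[a]}\}\right)$ for all $a\ge N$, uniformly in $q<q_1$ and $t\in[0,h]$, and summability of $f_{0,q_1}$ (i.e.\ $(0,q_1)\in D$) then closes exactly the union bound you set up. Such cylinder bounds survive as $q\uparrow q_1$ even when the moments blow up. A last, repairable point: you derive coercivity of $\tilde f$ from the ``comparable or bounded'' dichotomy, which is a hypothesis of Theorem \ref{thm:NDThm1}, not of this proposition; it is cleaner (and sufficient) to note that $(0,q_1)\in D$ means $\sum_e e^{q_1\sup(\tilde f\vert_{[e]})}<\infty$ with $q_1<0$, which forces $\sup(\tilde f\vert_{[e]})\to\infty$, and the uniform bound on oscillations of a H\"older family over cylinders then gives $\inf(\tilde f\vert_{[e]})\to\infty$ as well.
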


\begin{proposition}\label{MainResult3}
    Given the comparability of $F$ and $\log\Phi'$ from Lemma \ref{lem:Dopen}. For any $0\leq t\leq h$ fixed and any strictly decreasing sequence $\{q_n\}_{n\in\NN}$ for which $q_n\to -\infty$ as $n\to\infty$, there is a weak* accumulation point $\mu_{\infty}$ of the sequence of Gibbs states $\{\tmu_{t,q_n}\}_{n\in\NN}$, and this measure is $\tilde{f}$-minimizing, i.e. $\int_{E_A^{\infty}}\tilde{f}d\mu_{\infty} = \xi_{\min}$.
\end{proposition}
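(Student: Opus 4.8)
The plan is to separate the statement into an existence half (producing a weak* accumulation point) and an optimization half (identifying every such accumulation point as $\tilde f$-minimizing), letting the tightness furnished by Proposition \ref{MainResult2} carry the existence half. First I would note that $\{q_n\}$ is eventually below $q_1$, so by Proposition \ref{MainResult2} the family $\{\tmu_{t,q_n}\}_{n\in\NN}$ is tight; Prokhorov's theorem then makes it relatively compact in the weak* topology on Borel probability measures over the Polish space $E_A^\infty$, and some subsequence converges weak* to a Borel measure $\mu_\infty$. Tightness prevents escape of mass, so $\mu_\infty(E_A^\infty)=1$, and because the shift is continuous and weak* limits of invariant measures are invariant, $\mu_\infty$ is an invariant probability measure. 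This settles existence and simultaneously yields the easy inequality $\int\tilde f\,d\mu_\infty\geq\xi_{\min}$, since $\mu_\infty$ is one of the invariant probability measures over which $\xi_{\min}$ is the infimum of $\omega\mapsto\int\tilde f\,d\omega$.

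For the reverse inequality I would first control the averages $a_n:=\int_{E_A^\infty}\tilde f\,d\tmu_{t,q_n}$. Writing $h_n$ for entropy and $b_n:=\int\Xip\,d\tmu_{t,q_n}$, the fact that $\tmu_{t,q_n}$ is the equilibrium state for $t\Xip+q_n\tilde f$ gives the identity $P(t,q_n)=h_n+tb_n+q_n a_n$ with all terms finite. Fix $\varepsilon>0$ and choose an invariant $\mu^*$ with $\int\tilde f\,d\mu^*\leq\xi_{\min}+\varepsilon$; the variational principle then gives
\[
 h_n+tb_n+q_n a_n = P(t,q_n)\ \geq\ h(\mu^*)+t\!\int_{E_A^\infty}\!\Xip\,d\mu^*+q_n\!\int_{E_A^\infty}\!\tilde f\,d\mu^*.
\]
The decisive regularizing observation is that $h_n+tb_n\leq\sup_{\omega}\bigl(h(\omega)+t\int\Xip\,d\omega\bigr)=P(t,0)<\infty$, finiteness coming from cofinite regularity together with $0\leq t\leq h$; this is what replaces the (possibly infinite) topological entropy bound available only in the compact case. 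Comparability of $F$ with $\log\Phi'$ from Lemma \ref{lem:Dopen} makes the right-hand constant finite as well, since it forces $\int|\Xip|\,d\mu^*\asymp\int\tilde f\,d\mu^*<\infty$ and hence $h(\mu^*)\leq P(t,0)-t\int\Xip\,d\mu^*<\infty$. Rearranging the display and dividing by $q_n<0$ yields $\int\tilde f\,d\mu^*-a_n\geq(P(t,0)-h(\mu^*)-t\int\Xip\,d\mu^*)/q_n$, whose right side tends to $0$; letting $n\to\infty$ and then $\varepsilon\to 0$ gives $\limsup_n a_n\leq\xi_{\min}$, and with $a_n\geq\xi_{\min}$ we conclude $a_n\to\xi_{\min}$.

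Finally I would transfer convergence of the averages to the integral against $\mu_\infty$. Since $\tilde f$ is the amalgamation of a strictly positive H\"older family it is continuous and nonnegative, hence lower semicontinuous and bounded below, so truncating at height $M$ and applying the portmanteau inequality for weak* convergence followed by monotone convergence gives $\int\tilde f\,d\mu_\infty\leq\liminf_k a_{n_k}=\xi_{\min}$. Combined with the easy inequality this forces $\int\tilde f\,d\mu_\infty=\xi_{\min}$, so $\mu_\infty$ is $\tilde f$-minimizing (and indeed the same argument shows every weak* accumulation point is minimizing). I expect the main obstacle to be exactly the tension between the noncompactness of $E_A^\infty$ and the unboundedness of $\tilde f$: weak* convergence alone does not transfer integrals of $\tilde f$, so the proof must be arranged to use only the one-sided lower-semicontinuity estimate against the variational $\limsup$ bound, while tightness from Proposition \ref{MainResult2} is precisely what rules out the loss of mass that would otherwise invalidate $\int\tilde f\,d\mu_\infty\geq\xi_{\min}$.
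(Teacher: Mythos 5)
Your overall architecture (tightness from Proposition \ref{MainResult2} plus Prokhorov for existence, truncation and lower semicontinuity to transfer integrals to $\mu_\infty$, and the sandwich $\xi_{\min}\le\int\tilde f\,d\mu_\infty\le\liminf_n a_n\le\xi_{\min}+\varepsilon$) is sound, but the step you yourself call the ``decisive regularizing observation'' is false, and it is precisely the infinite-alphabet trap this section of the paper was written to avoid. You claim $h_n+tb_n\le\sup_{\omega}\bigl(h(\omega)+t\int\Xip\,d\omega\bigr)=P(t,0)<\infty$ for all $0\le t\le h$, with ``finiteness coming from cofinite regularity.'' Cofinite regularity asserts the opposite: $P(\theta)=\infty$, and $P(t,0)=P(t)=\infty$ for every $t\le\theta$ by the very definition of the finiteness parameter. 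Since the alphabet is countably infinite, $\theta\ge 0$ (already $Z_1(0,0)=\#E=\infty$, i.e.\ the entropy supremum you invoke is infinite), so your bound is vacuous on the whole interval $0\le t\le\theta$ --- in particular at $t=0$, the case closest to Jenkinson's lemma and the one the paper uses later in Remark \ref{rmk:0qSequenceIsfMinimizing}, and on $[0,\tfrac12]$ for the Gauss-type systems motivating the paper ($\theta=\tfrac12$, $h=1$). The same unfounded finiteness claim is invoked a second time to guarantee $h(\mu^*)\le P(t,0)-t\int\Xip\,d\mu^*<\infty$. Without a bound of the form $h_n+tb_n=o(|q_n|)$, dividing by $q_n$ yields nothing, so the key inequality $\limsup_n a_n\le\xi_{\min}$ remains unproven on $[0,\theta]$.

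The gap is repairable, and the repair is exactly where the comparability hypothesis of Lemma \ref{lem:Dopen} enters the paper's proof. The paper first discards the nonpositive term $t\int\Xip\,d\tmu_{t,q}$ so that only the entropy needs control, and then bounds it via comparability: $0\le h(\tmu_{t,q})\le k(q):=P(t,q)+\bigl(\tfrac{t}{\alpha}-q\bigr)\tfrac{\partial P}{\partial q}(t,q)-\tfrac{t\gamma}{\alpha}$ as in (\ref{ineq:VPDefofk}); since $k'(q)=\bigl(\tfrac{t}{\alpha}-q\bigr)\tfrac{\partial^2 P}{\partial q^2}(t,q)\ge 0$ for $q<t/\alpha$ (see (\ref{deri:kofq})), $k$ decreases as $q\to-\infty$ and is bounded below by $0$, so $h(\tmu_{t,q_n})$ stays bounded. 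An even quicker fix for your version: apply the variational principle at a point $(t,q_1)\in D$ rather than at $(t,0)$ (which lies outside $D$ when $t\le\theta$), giving $h_n+tb_n\le P(t,q_1)-q_1a_n$, bounded because $K\le a_n\le a_1$ by monotonicity of $\tfrac{\partial P}{\partial q}(t,\cdot)$; finiteness of $h(\mu^*)$ should likewise be extracted at a point of $D$. With that substitution your proof goes through, and its closing step is then genuinely different from the paper's: you identify $\lim_n a_n$ with $\inf_{\nu\in\MM}\int\tilde f\,d\nu$ directly, whereas the paper first proves $\lim_n a_n=\int\tilde f\,d\mu_\infty$ and then excludes a better $\nu$ by a separate contradiction with an affine support line. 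One bookkeeping point: the paper defines $\xi_{\min}(t)=\inf_q\tfrac{\partial P}{\partial q}(t,q)$ (infimum over the Gibbs family), while your ``easy inequality'' tacitly takes $\xi_{\min}=\inf_{\nu\in\MM}\int\tilde f\,d\nu$; your sandwich shows the two coincide, but under the paper's definition that inequality is not automatic and should be stated as part of the conclusion rather than assumed at the outset.
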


\subsection{Preliminaries for CGDMSs}
In most cases we will proceed in line with the definitions set in Urba\'nski, Roy, and Munday's  dynamics volumes, \cite{urbanskiVolumeErgodicTheory2021} and \cite{urbanskiVolumeFinerThermodynamic2022} with slight variation. We first collect together the lengthy definition of a CGDMS. 
\begin{definition}
    A \textbf{Conformal Graph Directed Markov System (CGDMS)} is a triple $(\Phi,\GG,A)$ where $\Phi =\{\phi_e: X_{t(e)}\to X_{i(e)}\}_{e\in E}$ is a collection of maps between a finite family of compact metric spaces $\{X_v\}_{v\in V}$, $V$ is the vertex set of $\GG = (V,E,i,t)$ a directed multi-graph, $A:E\times E\to\{0,1\}$ is a matrix with at least one $1$ in each row defining an allowable composition rule respecting the graph $\GG$ called $A$-admissibility. That is, $A_{ab} = 1$ if and only if the composition $\phi_{a}\circ\phi_{b}: X_{t(b)}\to X_{i(a)}$ is allowable and defined. Further the underlying Graph Directed System (GDS) defined by setting $A$ to be the adjacency matrix of the line graph of $\GG$ satisfies the following properties:
    \begin{enumerate}
        \item For every vertex $v\in V$, $X_v$ there is a $d(v)\geq 1$ such that $X_v$ is a compact subset of $\RR^{d(v)}$ which is regular closed.
        \item $\Phi$ satisfies the open set condition, that is for all $e_1,e_2\in E$, $e_1\neq e_2$ 
        \[
            \phi_{e_1}(\text{Int}(X_{t(e_1)}))\cap \phi_{e_2}(\text{Int}(X_{t(e_2)})) = \emptyset.
        \]
        \item For every $v\in V$ there is an open, connected and bounded set $W_v$ such that $X_v\subseteq W_v\subseteq \mathbb{R}^{d}$ and such that for every $e\in E$ with $t(e)=v$, the map $\phi_e$ extends to a contracting conformal diffeomorphism of $W_v$ into $\RR^{d}$, and all of these extensions have a common contraction ratio $s<1$. 
        \item There exists constants $L\geq 1$ and $\kappa>0$ such that 
        \[
            \left||\phi'_e(y)|-|\phi'_e(x)|\right|\leq L|\phi_e'(x)|\cdot \|y-x\|^{\kappa}
        \]
        for every $e\in E$ and every pair of points $x,y\in W_{t(e)}$.
    \end{enumerate}
\end{definition}
In other words $\Phi$ defines the inverted edges of $\GG$ and $A$ defines the traversable inverted paths through $\GG$. One should note that CGDMSs are often constructed by taking the inverse branches of a distance expanding system. Since $\Phi$ contains the information of the underlying GDS, in an abuse of notation we often refer to $\Phi$ alone as a CGDMS. In many contexts, it is helpful to also attach the symbolic representation $(E_A^{\infty},\sigma)$ to $\Phi$ where allowable concatenations of symbols are given by $A$ and $\sigma:E_A^{\infty}\to E_A^{\infty}$ is the one-sided shift map. For a finite word $w = w_1\dots w_n$ we define the length of the word $|w|$ to be the number of concatenated symbols that form $w$. The language of the symbolic representation $E_A^{*}$ is the set of all finite $A$-admissible words, 
\[
        E_A^{*} = \{\varepsilon\}\cup\left(\bigcup_{n=1}^{\infty} \{w\in E^n\;|\; A_{w_iw_{i+1}} = 1 \text{ for all } 1\leq i \leq n-1\}\right).
\]
One core property of systems we are interested in is defined by this $A$-admissibility rule.
\begin{definition}
    We say that a CGDMS $\Phi$ or its symbolic representation $E_A^{\infty}$ are \textbf{finitely irreducible} if the $A$-admissibility rule is such that there exists $I\subseteq E_A^{*}$ where $|I|<\infty$ and for every $e,f\in E$ there exists $\tau(e,f)\in I$ such that $e\tau(e,f)f\in E_A^{*}$. 
\end{definition}
 It is useful to note that if a CGDMS is finitely irreducible, then there exists $\tau\in I$ such that there are infinitely many letters $e\in E$ for which one may choose $\tau(e,e) = \tau$. This follows immediately from the pigeonhole principal. To see how the symbolic representation encodes a CGDMS, $\Phi$, we note that $w\in E_A^*$ represents a composition of maps in $\Phi$ with the exception that $\varepsilon\in E_A^0$, the empty word, is the unique word for which $w\varepsilon = \varepsilon w = w$ and for which $\phi_{\varepsilon} = \text{Id}_X: \bigsqcup_{v\in V} X_v \to \bigsqcup_{v\in V} X_v $ is the identity map on the disjoint union of the $X_v$'s. For all other $w$'s we have that
\[
    w\mapsto \phi_{w} := \phi_{w_1}\circ \phi_{w_2}\circ\cdots\circ\phi_{w_n}:X_{t(w_n)}=:X_{t(w)}\to X_{i(w)}:= X_{i(w_1)}.
\]
To limit this composition process, for each $n\in\NN$ we define the map $\cdot\vert_n:E_A^{\infty}\to E_A^n$ by $\om\vert_n := \om_1\dots \om_n$ for all $\om\in E_A^{\infty}$. We note that $\{\phi_{\om|_{n}}(X_{t(\om|_n}))\}_{n=0}^{\infty}$ is a descending sequence of compact sets, and the coding map $\pi:E_A^{\infty}\to X$ is therefore defined by 
\[
    \{\pi(\om)\} = \bigcap_{n=1}^{\infty} \phi_{\om\vert_n}(X_{t(\om\vert_n)}).
\]
The limit set of a CGDMS is given by 
\[
    J:= \pi(E_A^{\infty}) = \bigcup_{\om\in E_A^{\infty}}\{\pi(\om)\}.
\]
$J$ need not be a compact set when $E$ is infinite (if not compact, it is analytic in the set theoretic sense of the term). One should also note that $E_A^{\infty,\om}$ is the set of all points $\rho\in E_A^{\infty}$ such that $A_{\om_{|\om|},\rho_1} = 1$. Alternatively, $E_A^{\infty,\om} = \sigma^{|\om|}([\om]_A)$. It is a good time to recall the definition of the strong open set condition which we will require of our systems.
\begin{definition}
    A CGDMS $\Phi$ satisfies the \textbf{strong open set condition (SOSC)} if $J\cap\text{Int}(X)\neq \emptyset$, that is, if 
    \[
        \bigcup_{v\in V} (J\cap \text{Int}(X_v)) \neq \emptyset.
    \]
\end{definition}

\noindent The coding map need not be injective either. For this reason, and for the sake of well-defined partitions later on when discussing the multifractal decomposition, we take
\[
    J_{>1} := \{x\in J\;|\; |\pi^{-1}(x)|>1\}.
\]
$J_{>1}$ is the set of points in the limit set with non-unique codings. The restriction $\displaystyle\pi\vert_{E_A^{\infty}\setminus \pi^{-1}(J_{>1})}$ is then bijective by construction.

There are also several types of potentials on the symbolic representation that we should take a moment to recall before developing our results.
\begin{definition}\label{def:summable}
    A potential  $\varphi: E_A^{\infty}\to \RR$ is \textbf{summable} if 
    \[
        \sum_{e\in E} \exp(\sup(\varphi\vert_{[e]}))<\infty.
    \]
    We say that for a CGDMS, $\Phi$, a family $F=\{f_e: X_{t(e)}\to \mathbb{R}\}_{e\in E}$ is summable if 
    \[
        \sum_{e\in E} \|\exp f_e\|_{X_{t(e)}}<\infty.
    \]
\end{definition}
\begin{definition}
    A potential $\varphi: E_A^{\infty}\to \RR$ is \textbf{acceptable} provided that it is uniformly continuous and 
    \[
        \osc(\varphi):= \osc(\varphi,\UU_E) = \sup_{e\in E}[\sup(\varphi\vert_{[e]})-\inf(\varphi\vert_{[e]})]<\infty.
    \]
\end{definition}

\noindent An important subclass of acceptable potentials are those which are H\"older continuous on cylinders.

\begin{definition}
    A potential $\varphi: E_A^{\infty}\to\RR$ is \textbf{H\"older continuous on cylinders} if there exists constants $\beta\geq 0$ and $c\geq 0$ for which 
    \[
        |\om \wedge \tau|\geq 1\implies |f(\om)-f(\tau)|\leq c[d(\om,\tau)]^{\beta},
    \]
    and we define $v_{\beta}(f)$ to be the least such $c$ with this property. We also say that for a CGDMS, $\Phi$, a family of potentials $F=\{f_e: X_{t(e)}\to \mathbb{R}\}_{e\in E}$ is \textbf{H\"older with exponent $\beta>0$} if 
    \[
        \nu_{\beta}(F):= \sup_{n\in\NN}\sup_{w\in E_A^n}\sup_{x,y\in X_{t(w)}}|f_{w_1}(\phi_{\sigma(w)}(x))-f_{w_1}(\phi_{\sigma(w)}(y))|e^{\beta n}<\infty.
    \]
\end{definition}
The finiteness of the inner two suprema implies that the diameters of $f_{w_1}\circ\phi_{\sigma(w)}(X_{t(w)})$ are finite for every finite word of length $n$ while the exterior supremum requires that these diameters go to $0$ exponentially fast as the length of the word goes to infinity. 
\begin{definition}\label{def:Variation}
    Let $f:E_A^{\infty}\to \RR$ be a real-valued potential. Define 
    \[
        \textit{var}_n(f) := \sup_{|\om\wedge\tau| = n}|f(\om)-f(\tau)|.
    \]
    We say $f$ is of \textbf{summable variations} if
    \[
        V(f):=\sum_{n=1}^{\infty}\textit{var}_n(f)<\infty. 
    \]
\end{definition}
\noindent There is a relationship between H\"older continuity on cylinders and summable variations. To see this, we recall a portion of the Bounded Variation Principle for Ergodic Sums that we will use later on.
\begin{lemma}[See \cite{urbanskiVolumeFinerThermodynamic2022} Lemma 17.2.3]\label{lem:BVErgodicSums} If a potential $f:E_A^{\infty}\to\RR$ is H\"older continuous on cylinders with exponent $\beta$, then for all $n\in \NN$, all $w\in E_A^{n}$ and all $\rho,\gamma\in E_A^{\infty,w}$, we have
\[
    |S_nf(w\rho)-S_nf(w\gamma)|\leq \frac{v_{\beta}(f)}{e^{\beta}-1}e^{-\beta|\rho\wedge\gamma|}.
\]
\end{lemma}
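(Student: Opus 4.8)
The plan is to unfold the ergodic sum term by term, apply the cylinder-Hölder estimate to each summand, and then sum a single geometric series. First I would write $S_nf = \sum_{k=0}^{n-1} f\circ\sigma^k$, so that by the triangle inequality
\[
    |S_nf(w\rho)-S_nf(w\gamma)|\leq \sum_{k=0}^{n-1}\left|f(\sigma^k(w\rho))-f(\sigma^k(w\gamma))\right|.
\]
The key observation is purely combinatorial: for $w\in E_A^{n}$ and $\rho,\gamma\in E_A^{\infty,w}$, the images $\sigma^k(w\rho)$ and $\sigma^k(w\gamma)$ both begin with the block $w_{k+1}\cdots w_n$ and then continue with $\rho$ and $\gamma$ respectively. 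Hence their longest common prefix has length exactly $(n-k)+|\rho\wedge\gamma|$ for each $0\leq k\leq n-1$.

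Next I would invoke the hypothesis. Using the standard metric $d(\om,\tau)=e^{-|\om\wedge\tau|}$ on $E_A^{\infty}$, so that $[d(\om,\tau)]^{\beta}=e^{-\beta|\om\wedge\tau|}$, the definition of Hölder continuity on cylinders applied to the pair $\sigma^k(w\rho),\sigma^k(w\gamma)$ gives, for each $k$,
\[
    \left|f(\sigma^k(w\rho))-f(\sigma^k(w\gamma))\right|\leq v_{\beta}(f)\,e^{-\beta((n-k)+|\rho\wedge\gamma|)}=v_{\beta}(f)\,e^{-\beta|\rho\wedge\gamma|}\,e^{-\beta(n-k)}.
\]
Here one should check that the hypothesis $|\om\wedge\tau|\geq 1$ is legitimately available for every summand: since $k\leq n-1$ forces $n-k\geq 1$, the common prefix length $(n-k)+|\rho\wedge\gamma|$ is at least $1$ regardless of whether $\rho_1=\gamma_1$, so each application is valid. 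Pulling the factor $v_{\beta}(f)\,e^{-\beta|\rho\wedge\gamma|}$ out of the sum and reindexing with $j=n-k$ leaves the geometric sum $\sum_{j=1}^{n}e^{-\beta j}\leq \sum_{j=1}^{\infty}e^{-\beta j}=\frac{e^{-\beta}}{1-e^{-\beta}}=\frac{1}{e^{\beta}-1}$, which yields exactly the claimed bound $\tfrac{v_{\beta}(f)}{e^{\beta}-1}e^{-\beta|\rho\wedge\gamma|}$.

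There is no substantial obstacle in this argument; it is essentially a telescoping/geometric-decay bookkeeping exercise. The only point requiring care is correctly tracking how the common-prefix length shrinks by one with each application of $\sigma$, and recognizing that the dependence on $\rho,\gamma$ enters only through the constant factor $e^{-\beta|\rho\wedge\gamma|}$ while the surviving sum over the ``$w$-portion'' of the prefix is a decaying geometric series that is summed once and bounded uniformly in $n$. This uniform bound is precisely what makes the estimate independent of $n$ and is the feature exploited later in the thermodynamic formalism.
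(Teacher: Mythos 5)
Your proof is correct and is essentially the standard argument for this lemma, which the paper does not reprove but cites from Urba\'nski et al.\ (Lemma 17.2.3 there): unfold $S_nf$, note that $\sigma^k(w\rho)$ and $\sigma^k(w\gamma)$ share a prefix of length $(n-k)+|\rho\wedge\gamma|\geq 1$, apply the cylinder-H\"older bound termwise, and sum the geometric series $\sum_{j\geq 1}e^{-\beta j}=\tfrac{1}{e^{\beta}-1}$. Your explicit check that each summand satisfies the hypothesis $|\om\wedge\tau|\geq 1$ is exactly the right point of care, and the bookkeeping matches the cited proof.
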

\noindent In particular, if a potential $f$ is H\"older continuous on cylinders with exponent $\beta$ taking $w = i\in \NN = E$ then each term in the sequence of differences $|f(w\rho_n)-f(w\gamma_n)|$ where $|\rho_n\wedge\gamma_n|=n\geq 0$ is bounded by a term in a geometric sequence independent of $i$. Taking supremums over $i$ and $|\rho_n\wedge\gamma_n|=n$ and summing then yields summable variations.

We also recall the construction of the amalgamated function of a family $F$. 

\begin{definition}
    For a family of potentials $F = \{f_e: X_{t(e)}\to \mathbb{R}\}_{e\in E}$ the \textbf{amalgamated function}  $\tilde{f}: E_A^{\infty}\to \mathbb{R}$ is given by 
    \[
        \tilde{f}(\om):=f_{\om_1}(\pi(\sigma(\om))).
    \]
\end{definition}

We note that the Lyapunov family $\log \Phi ':= \{\log|\phi_{e}'|\}_{e\in\NN}$ has amalgamated function $\Xip(\om) := \log|\phi_{\om_1}'(\pi(\sigma(\om)))|$, which is H\"older continuous on cylinders, and hence acceptable. For a CGDMS, if a family is H\"older with exponent $\beta$, then the amalgamated function is H\"older continuous on cylinders with exponent $\beta$, and a family is summable if and only if the amalgamated function is summable (See \cite[Lemma 19.8.4 and Lemma 19.8.6]{urbanskiVolumeFinerThermodynamic2022}). We note that for a CGDMS we have that 
\begin{equation}\label{eqn:EndBehaviorAlongAlphabet}
    \lim_{i\to\infty} \||\phi_i'|\|_{X_{t(i)}} = 0.
\end{equation}
For each family we also have the associated exponents defined by Birkhoff averages.
\begin{definition}
    Let $F$ be a family of potentials with amalgamated function $\tilde{f}$. For $\om\in E_A^{\infty}$ the \textbf{$F$-exponent} at $\om$, $\xi(\om)$ if it exists, is given by the Birkhoff average 
    \[
        \xi(\om) := \lim_{n\to\infty}\frac{1}{n}\sum_{j=0}^{n-1}\tilde{f}(\sigma^j(\om)).
    \]
    Further, for any shift invariant Borel probability measure $\mu$ we call the quantity 
    \[
        F(\mu):=\int_{E_A^{\infty}} \tilde{f}d\mu
    \]
    the characteristic $F$-exponent with respect to $\mu$. 
\end{definition}

A family of functions $F = \{f_e\}_{e\in E}$ is uniformly bounded below by $K\in\RR$ if and only if  $f_e\geq K$ for all $e\in E$. We say a family of functions is strictly positive if $F$ is uniformly bounded below and $K>0$. For a family $F$ the translation of the family by $a\in \RR$ is defined by $F-a := \{f_e-a:X\to \RR\}_{e\in E}$. Many of our results are stated for families of functions that are strictly positive. However, the following lemma allows us to preform the computations for the cases in which $F$ is uniformly bounded below by applying our theory to a translated family and then translating back to reflect the original family.

\begin{lemma}\label{lem:SpectrumIsTranslationOfFamilyInvariant}
    Let $K\in \RR$. If $F$ is a family of functions such that $F>K$, then we may replace $F$ with $F'=F-K+\varepsilon$ for some $\varepsilon>0$. That is, a point $\om$ has $F$-exponent $\xi_F$ if and only if $\om$ has $F'$ exponent $\xi_{F}-K+\varepsilon$
\end{lemma}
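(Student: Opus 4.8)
The plan is to prove Lemma~\ref{lem:SpectrumIsTranslationOfFamilyInvariant} by a direct computation tracking what the translation $F \mapsto F' = F - K + \varepsilon$ does to the Birkhoff average defining the $F$-exponent. First I would unwind the definitions: the family $F = \{f_e\}_{e\in E}$ has amalgamated function $\tilde{f}(\om) = f_{\om_1}(\pi(\sigma(\om)))$, and the translated family $F' = \{f_e - K + \varepsilon\}_{e\in E}$ has amalgamated function $\tilde{f}'(\om) = (f_{\om_1} - K + \varepsilon)(\pi(\sigma(\om))) = \tilde{f}(\om) - (K-\varepsilon)$, since $K$ and $\varepsilon$ are constants and the amalgamation is linear in the family. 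The key observation is thus that the amalgamated function of the translated family differs from the original by the constant $K - \varepsilon$.

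Next I would substitute this into the Birkhoff average. For any $\om \in E_A^{\infty}$,
\[
    \frac{1}{n}\sum_{j=0}^{n-1}\tilde{f}'(\sigma^j(\om)) = \frac{1}{n}\sum_{j=0}^{n-1}\bigl(\tilde{f}(\sigma^j(\om)) - (K-\varepsilon)\bigr) = \left(\frac{1}{n}\sum_{j=0}^{n-1}\tilde{f}(\sigma^j(\om))\right) - (K-\varepsilon),
\]
because the average of a constant is that constant. Taking the limit as $n \to \infty$, the left-hand side converges if and only if the right-hand side does (the two sequences differ by a fixed constant for every $n$), and when they converge the $F'$-exponent equals $\xi_F - K + \varepsilon$. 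This gives the stated biconditional: $\om$ has $F$-exponent $\xi_F$ if and only if $\om$ has $F'$-exponent $\xi_F - K + \varepsilon$. Since $F > K$ guarantees $F' = F - K + \varepsilon > \varepsilon > 0$, the translated family is strictly positive, which is the point of allowing the replacement.

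This lemma is essentially a bookkeeping statement, so I do not anticipate a genuine obstacle; the only thing to be careful about is the direction of the existence claim, namely that convergence of the translated average is equivalent to (not merely implied by) convergence of the original. That equivalence is immediate because adding a constant to every term of a sequence preserves convergence in both directions, so the existence of one limit forces the existence of the other. I would close by remarking that the same constant shift applies verbatim to the characteristic $F$-exponent $F(\mu) = \int \tilde{f}\,d\mu$ for any shift-invariant probability measure $\mu$, since $\int \tilde{f}'\,d\mu = \int \tilde{f}\,d\mu - (K-\varepsilon)$ by linearity of the integral and $\mu(E_A^{\infty}) = 1$; this confirms that the whole multifractal spectrum is merely rigidly translated, justifying the practice of reducing the uniformly-bounded-below case to the strictly-positive case.
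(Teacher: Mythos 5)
Your proposal is correct and follows essentially the same route as the paper's proof: a direct computation showing the Birkhoff averages of $\tilde{f}$ and $\tilde{f}'$ differ by the constant $K-\varepsilon$ for every $n$, hence the limits exist together and differ by that constant. If anything, you are slightly more careful than the paper, which only writes out the forward direction (``if $\xi_F(\om)$ exists'') and leaves the symmetric converse implicit, whereas you address the biconditional explicitly; the additional remark on the characteristic exponent $F(\mu)$ is a harmless bonus.
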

\begin{proof}
    The argument is quick. Let $\xi_F$ and $\xi_{F'}$ denote the $F$ and $F'$ exponent functions respectively. Note that if $\xi_F(\om)$ exists, then  
    \[
        \xi_F(\om) = \lim_{n\to\infty}\frac{1}{n}\sum_{j=0}^{n-1}\tilde{f}(\sigma^j(\om)) = \lim_{n\to\infty}\frac{1}{n}\sum_{j=0}^{n-1}[(\tilde{f}(\sigma^j(\om))+\varepsilon-K)-\varepsilon+K] = \xi_{F'}(\om)-\varepsilon+K.
    \]
\end{proof}
If $\mu$ is ergodic, then $\mu$-a.e. point has the same $F$-exponent. We also remark that the definition is different for Lyapunov exponents (with family $\log\Phi'$) but just up to a sign. In our framework however, the Lyapunov exponent may be studied by applying our results to the positive Lyapunov family $F= -\log\Phi'$
\begin{definition}\label{def:Lyapunov}
    Let $\Phi$ be a CGDMS, then the \textbf{characteristic Lyapunov exponent with respect to $\mu$} is given by 
    \[
        \chi(\mu) = -\int_{E_A^{\infty}}\Xip d\mu>0.
    \]
\end{definition}

\section{Topological Pressure}
Given CGDMS $\Phi = \{\phi_e\}_{e\in E}$, the corresponding pressure function of the family $t\log\Phi'+q F$ is given by the asymptotic growth rate of partition functions $Z_n(t,q)$:
\begin{equation}
    P(t,q) := P(t\log\Phi'+q F):= \lim_{n\to\infty}\frac{1}{n}\log\sum_{w\in E_A^n}||\exp(S_w(t\log\Phi'+q F))||_{X_{t(w)}} :=  \lim_{n\to\infty}\frac{1}{n}\log Z_n(t,q) \label{eqn:Pressure}
\end{equation}
where
\begin{equation}
    S_w(F) := \sum_{j=1}^{|w|} f_{w_j}\circ\phi_{\sigma^j(w)}
\end{equation}
and 
\begin{equation}
    \|\cdot \|_{X_t(\om)} := \| \cdot \vert_{X_t(\om)} \|_{\infty}.
\end{equation}
Note that the limit exists whenever $(t,q)$ produces a summable family as the sequence of sums is submultiplicative and so the convergence follows from Fekete's lemma \cite{feketeUeberVerteilungWurzeln1923}. This pressure agrees with the pressure of the amalgamated function $f_{t,q} =  t\Xip+q\tilde{f}$ of $t\log\Phi'+q F$ whenever $(t,q)$ produces a summable H\"older family of functions as per \cite[Proposition 19.8.9] {urbanskiVolumeFinerThermodynamic2022}. In this case, the amalgamated function is acceptable and, as we are concerned with finitely irreducible CGDMSs, by \cite[Proposition 17.2.8]{urbanskiVolumeFinerThermodynamic2022} we may write
\[
    P(t,q) = P(f_{t,q}) = \lim_{n\to\infty}\frac{1}{n}\log Z_n(f_{t,q}) = \lim_{n\to\infty}\frac{1}{n}\log\sum_{w\in E_A^n}\exp(\overline{S}_nf_{t,q}([w])).
\]

\noindent We concern ourselves with the following partition-like functions 
\begin{equation}
    \sum_{w\in E_A^n}\left\| \prod_{j=1}^{|w|}|\phi_{w_j}'|^t\circ \phi_{\sigma^j(w)}\right\|_{X_{t(w)}}\cdot \left\|\prod_{j=1}^{|w|} \exp(f_{w_j})^q\circ \phi_{\sigma^j(w)}\right\|_{X_{t(w)}} =: \Zt_n(t,q).
\end{equation}
\begin{proposition}\label{prop:JustifyManhattanRegion}
 $P(t,q)<\infty \iff \Zt_1(t,q)<\infty$. 
\end{proposition}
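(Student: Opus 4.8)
The plan is to interpolate between $P(t,q)$ and $\Zt_1(t,q)$ through the genuine first partition function $Z_1(t,q)=\sum_{e\in E}\big\|\,|\phi_e'|^t\exp(qf_e)\big\|_{X_{t(e)}}$, proving the two equivalences $\Zt_1(t,q)<\infty\iff Z_1(t,q)<\infty$ and $Z_1(t,q)<\infty\iff P(t,q)<\infty$ separately. For the first, the inequality $Z_1\le\Zt_1$ is immediate from $\|AB\|\le\|A\|\,\|B\|$, while the reverse $\Zt_1\le K^{|t|}Z_1$ follows term by term from bounded distortion: if $y^{\ast}$ realizes (nearly) the supremum of $\exp(qf_e)$ on $X_{t(e)}$, then $\big\|\,|\phi_e'|^t\big\|\cdot\big\|\exp(qf_e)\big\|\le K^{|t|}|\phi_e'(y^{\ast})|^t\exp(qf_e(y^{\ast}))\le K^{|t|}\big\|\,|\phi_e'|^t\exp(qf_e)\big\|$, where $K$ is the uniform distortion constant furnished by axiom (4). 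Hence it suffices to relate $Z_1$ and $P$.

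For the direction $Z_1<\infty\Rightarrow P<\infty$ I would establish submultiplicativity of $(Z_n)_n$. Writing an admissible word of length $m+n$ as $w'w''$ with $|w'|=m$ and $|w''|=n$, the cocycle identity $S_{w'w''}(\psi)=S_{w'}(\psi)\circ\phi_{w''}+S_{w''}(\psi)$ for $\psi=t\log\Phi'+qF$, together with subadditivity of the supremum norm and the observation that $\phi_{w''}(X_{t(w'')})\subseteq X_{t(w')}$ (so dropping the admissibility constraint between $w'$ and $w''$ only enlarges the sum), yields $Z_{m+n}(t,q)\le Z_m(t,q)Z_n(t,q)$. Fekete's lemma then gives $P(t,q)=\inf_n\frac1n\log Z_n(t,q)\le\log Z_1(t,q)$, so finiteness of $Z_1$ forces finiteness of $P$.

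The substantive direction is the contrapositive $Z_1=\infty\Rightarrow P=\infty$, and this is where finite irreducibility does the work. Fix a letter $f_0\in E$; for each $e$ choose $\tau(e,f_0)\in I$ with $e\,\tau(e,f_0)\,f_0\in E_A^{*}$. Since $I$ is finite and $\sum_{e\in E}\big\|\,|\phi_e'|^t\exp(qf_e)\big\|=Z_1=\infty$, the pigeonhole principle produces a single $\tau^{\ast}\in I$ and an infinite set $S\subseteq E$ with $e\,\tau^{\ast}f_0\in E_A^{*}$ for all $e\in S$ and $\sum_{e\in S}\big\|\,|\phi_e'|^t\exp(qf_e)\big\|=\infty$. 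For each $n\ge m_0:=|\tau^{\ast}|+2$ I would fix an admissible word $\rho$ of length $n-m_0$ that may follow $f_0$ — one exists because every row of $A$ contains a $1$, so $f_0$ admits admissible continuations of every length — and set $u=\tau^{\ast}f_0\rho$, a fixed admissible word of length $n-1$ with $A_{e\,u_1}=1$ for every $e\in S$. Applying the cocycle identity to $eu$, the estimate $\sup(AB)\ge\sup(A)\inf(B)$ for nonnegative functions, and bounded distortion together with the bounded oscillation of the acceptable potential $qF$ to pass from the supremum over $\phi_u(X_{t(u)})$ to the supremum over all of $X_{t(e)}$, produces constants $c=c(t,q)>0$ and $c_u=\exp(\inf S_u(t\log\Phi'+qF))>0$ with $\big\|\exp S_{eu}(t\log\Phi'+qF)\big\|\ge c\,c_u\,\big\|\,|\phi_e'|^t\exp(qf_e)\big\|$. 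Summing over $e\in S$ shows $Z_n(t,q)=\infty$ for every $n\ge m_0$, whence $P(t,q)=\lim_n\frac1n\log Z_n(t,q)=\infty$.

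I expect the routing step of the third paragraph to be the main obstacle. The naive attempt of appending a single fixed word to length-one terms fails, because the letters carrying the divergent mass of $Z_1$ need not share a common admissible successor; it is precisely finite irreducibility — through the finiteness of $I$ and the pigeonhole argument — that funnels an infinite-mass subfamily through one connecting word $\tau^{\ast}$ into admissible words of every sufficiently large length. The rest is bookkeeping: verifying that the distortion and oscillation constants $K$ and $\osc(qF)$ are finite for the fixed pair $(t,q)$ (guaranteed by axiom (4) and by $F$ being H\"older, hence acceptable), and that the per-term comparison constants are uniform in $e$ so that they factor cleanly out of the sums.
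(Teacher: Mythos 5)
Your proposal is correct, and it takes a genuinely more self-contained route than the paper's proof. Both arguments interpolate through the genuine partition function $Z_1(t,q)=\sum_{e\in E}\|\exp(S_e(t\log\Phi'+qF))\|_{X_{t(e)}}$, and both compare $\Zt_1$ with $Z_1$ via bounded distortion; your version of that comparison is in fact leaner, since evaluating $|\phi_e'|^t$ at a near-maximizer of $\exp(qf_e)$ lets you get by with the distortion constant of $\Phi$ alone, whereas the paper invokes distortion constants for both $\Phi$ and $F$ (and compares $\Zt_n$ with $Z_n$ for all $n$, though only $n=1$ is needed for the statement). The real divergence is in the equivalence $Z_1(t,q)<\infty \iff P(t,q)<\infty$: the paper outsources this entirely to \cite[Theorem 17.2.8]{urbanskiVolumeFinerThermodynamic2022} and \cite[Proposition 19.8.9]{urbanskiVolumeFinerThermodynamic2022}, which package the submultiplicativity and the role of finite irreducibility, while you re-prove it from scratch: the easy direction by the cocycle identity, submultiplicativity across the concatenation, and Fekete's lemma giving $P\leq\log Z_1$; and the substantive direction ($Z_1=\infty\Rightarrow Z_n=\infty$ for all $n\geq m_0$, hence $P=\infty$) by the pigeonhole/routing argument through a single connecting word $\tau^{*}\in I$. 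That routing argument is sound as written: the oscillation of each $f_e$ on $X_{t(e)}$ is bounded uniformly in $e$ (take $n=1$ in the definition of $\nu_\beta(F)$), bounded distortion controls the $t\log|\phi_e'|$ part, and so your constants $c$ and $c_u$ are uniform in $e\in S$, which is exactly what lets the divergent mass of the sub-sum survive into $Z_n$. What the paper's citation buys is brevity; what your argument buys is transparency about precisely where finite irreducibility enters — your closing remark that naively appending one fixed word fails because the letters carrying the divergent mass need not share an admissible successor is exactly the content hidden inside the cited theorem.
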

\begin{proof}
    According to \cite[Theorem 17.2.8]{urbanskiVolumeFinerThermodynamic2022} and \cite[Proposition 19.8.9]{urbanskiVolumeFinerThermodynamic2022} it is enough to show that $\Zt_n(t,q) \asymp Z_n(t,q)$. Let $K$ be the bounded distortion constant for $\Phi$ and let $\text{Dist}(F)$ be the distortion constant for $F$ given by exponentiation of the inequality in Lemma \ref{lem:BVErgodicSums} for $n=1$. Then, by the Cauchy-Schwarz inequality and the distortion properties for $F$ and $\Phi$, we have that
    \begin{equation}
        \max\{K^{-t},\text{Dist}(F)^{-q}\} \Zt_n(t,q) \leq Z_n(t,q) \leq \Zt_n(t,q). 
    \end{equation}
    Hence, $\Zt_1(t,q)< \infty \iff Z_1(t,q)< \infty \iff P(t,q)<\infty$.
\end{proof}

The geometric family of potentials, that is, the parameterized Lyapunov family $t\log\Phi'$, is of particular importance in the study of the geometry of $J$ for a given $\Phi$. For the choice of $F = -\log\Phi'$ we will denote the Lyapunov Pressure by $P_L(t,q) =: P(t-q,0)$, and sometimes we just write $P(t)=P_L(t,0)$ for this pressure function when $q=0$. The finiteness parameter $\theta$ and the Bowen parameter $h$ derived from the geometric family are of particular importance to us. They are defined as
\[
    \theta := \inf\{t \in\RR \;|\; P(t)<\infty\} \hspace{2cm} h:= \inf\{t\geq0\:|\; P(t)\leq 0\}.
\]
As noted in \cite[Chapter 19]{urbanskiVolumeFinerThermodynamic2022} $\theta\leq h$, $P(h)\leq 0$, and we have the following theorem known as Bowen's Formula.
\begin{theorem}[See \cite{urbanskiVolumeFinerThermodynamic2022} Theorem 19.6.4]
If $\Phi$ is a finitely irreducible CGDMS, then its Bowen parameter $h$ is such that 
\[
    h:=\inf\{t\geq 0\;|\; P(t)\leq 0\} = HD(J) = sup\{HD(J_F)\;|\; F\subseteq E, |F|<\infty\}\geq \theta.
\]
Moreover, if $P(t)=0$ then $t$ is the only zero of the pressure function and $t=h=HD(J)$.
\end{theorem}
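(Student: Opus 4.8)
The plan is to prove the two inequalities $\HD(J)\le h$ and $\HD(J)\ge h$ separately, to read off the finite-subsystem characterization and the bound $h\ge\theta$ in the process, and finally to deduce the uniqueness clause from strict monotonicity of the pressure. Throughout I would use the standing observation that, on the region where it is finite, $P(t)$ is non-increasing and in fact \emph{strictly} decreasing: its derivative is the characteristic Lyapunov exponent $\int\Xip\,d\tmu_t<0$, since $\Xip=\log|\phi'|<0$ for contractions. In particular $\{t\ge 0: P(t)\le 0\}=[h,\infty)$, so $P(t)<0$ for every $t>h$ and $P(t)>0$ for every $0\le t<h$.

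For the upper bound I would argue directly by covering. For each $n$ the cylinder images $\{\phi_w(X_{t(w)}):w\in E_A^n\}$ cover $J$, their mesh tends to $0$ by the common contraction ratio $s<1$, and by conformality together with the bounded distortion constant $K$ one has $\diam\big(\phi_w(X_{t(w)})\big)\asymp\|\phi_w'\|$. Hence, for a fixed $t>h$, the $t$-dimensional sum over this cover is comparable to $Z_n(t)\asymp e^{nP(t)}$, which tends to $0$ because $P(t)<0$. Therefore the $t$-dimensional Hausdorff measure of $J$ vanishes, so $\HD(J)\le t$; letting $t\downarrow h$ gives $\HD(J)\le h$.

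The lower bound is where the real work lies, and I expect it to be the main obstacle: since $E$ may be infinite, $J$ need not be compact and there is in general no finite conformal measure supported on all of $J$ at the critical exponent, so a direct Frostman/mass-distribution argument on $J$ is unavailable. I would instead pass to finite subsystems. For finite $F\subseteq E$ the subsystem $\Phi_F$ is a finite conformal iterated function system carrying a $t_F$-conformal probability measure $m_F$ on $J_F$, where $t_F$ is the unique zero of the finite pressure $P_F$; combining the mass distribution principle (via $m_F$ and bounded distortion) with the covering bound above applied to $\Phi_F$ yields the finite Bowen formula $\HD(J_F)=t_F$. It then remains to show $\sup_F t_F=h$. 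Since every finite subsystem is dominated by $\Phi$ we have $P_F\le P$ pointwise, whence $t_F\le h$. For the reverse, choose an exhausting sequence $F_k\nearrow E$; the partition functions satisfy $Z_n^{F_k}(t)\nearrow Z_n(t)$, and the standard approximation property of the pressure gives $P_{F_k}(t)\nearrow P(t)$ for $t>\theta$. Given $\eps>0$ with $0\le h-\eps<h$ we have $P(h-\eps)>0$, hence $P_{F_k}(h-\eps)>0$ for all large $k$; since $P_{F_k}(t)\to-\infty$ as $t\to\infty$, its zero $t_{F_k}$ exceeds $h-\eps$. Thus $\sup_F t_F\ge h-\eps$ for every $\eps$, so $\sup_F t_F=h$.

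Assembling the pieces, $h=\sup_F\HD(J_F)\le\HD(J)\le h$ forces equality throughout, which simultaneously proves $\HD(J)=h$ and the finite-subsystem identity; the inequality $h\ge\theta$ is then immediate from the definitions, since $P(t)=+\infty>0$ for $t<\theta$. Finally, for the ``moreover'' clause, the strict monotonicity of $P$ recorded at the outset shows that $P$ can vanish at most once on $(\theta,\infty)$; if $P(t)=0$ then $t$ is that unique zero, and the equalities just established identify it with $h=\HD(J)$.
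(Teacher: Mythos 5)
The paper itself offers no proof of this statement---it is quoted directly from the cited reference (Theorem 19.6.4 of Urba\'nski--Roy--Munday, i.e.\ the Mauldin--Urba\'nski Bowen formula)---so the only meaningful comparison is with the standard proof in that source. Your architecture reproduces it: the upper bound $\HD(J)\le h$ by covering $J$ with $n$-cylinders and using $Z_n(t)\to 0$ when $P(t)<0$; the lower bound via finite subsystems, Bowen's formula for finite systems, and the approximation $\sup_F P_F = P$; the bound $h\ge\theta$ from $P\equiv\infty$ below $\theta$; and uniqueness from strict monotonicity. So the route is the intended one, but one of your steps is a genuine gap as written.

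You assert that for an \emph{arbitrary} finite $F\subseteq E$ the subsystem $\Phi_F$ ``is a finite conformal iterated function system carrying a $t_F$-conformal probability measure $m_F$, where $t_F$ is the unique zero of $P_F$.'' This is false in general: finite irreducibility of $\Phi$ does not pass to subsystems, and a finite $F$ may fail to be irreducible or may even satisfy $E_F^{\infty}=\emptyset$ (e.g.\ $F=\{e\}$ with $A_{ee}=0$), in which case $Z_n^F(t)=0$ for large $n$, $P_F\equiv-\infty$ has no zero, and no conformal measure exists. The standard repair---and the place where the hypothesis of finite irreducibility actually enters your lower bound---is to restrict to finite alphabets containing every letter occurring in the witness set $I$: each such subsystem is a finite \emph{irreducible} Markov system to which the finite Bowen formula and Perron--Frobenius theory apply, and since $J_F\subseteq J_{F'}$ whenever $F\subseteq F'$, the supremum over these augmented subsystems equals the supremum over all finite ones. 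Relatedly, the ``standard approximation property'' $P_{F_k}(t)\nearrow P(t)$ does not follow from monotone convergence of the partition functions alone (one must interchange $\sup_F$ with $\lim_n$); it is itself a theorem whose proof requires inserting connecting words from $I$, so you should flag it as the crux rather than a triviality. Finally, a cosmetic point: strict monotonicity of $P$ need not go through differentiability and Gibbs states (which carry summability and integrability caveats); since $\|\phi_w'\|_{X_{t(w)}}\le s^{|w|}$ one has $Z_n(t+\delta)\le s^{n\delta}Z_n(t)$, hence $P(t+\delta)\le P(t)+\delta\log s<P(t)$ wherever $P(t)<\infty$, which is all that the identity $\{t\ge 0: P(t)\le 0\}=[h,\infty)$ and the uniqueness clause require.
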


\noindent The parameters allows us to recall the last set of important descriptors of CGDMS at play in our main theorem
\begin{definition}
       A CGDMS $\Phi$ is called \textbf{cofinitely regular} if $P(\theta) = \infty$.
\end{definition}
\begin{definition}
    A CGDMS $\Phi$ is called \textbf{regular} if there is some $t\geq 0$ such that $P(t)=0$. Equivalently, $P(h) = 0$. If a CGDMS is not regular it is said to be \textbf{irregular}. 
\end{definition}
Note that a cofinitely regular CGDMS is regular. This follows by \cite[Proposition 19.4.6 (c)]{urbanskiVolumeFinerThermodynamic2022} as the Lyapunov Pressure $P(t)$ is strictly decreasing to $-\infty$, continuous on $(\theta,\infty)$ and right-continuous at $\theta$. We will engage with cofinite regularity shortly in the discussion of the finiteness region of pressure in the next subsection. Before we do, we should also recall the variational principle for acceptable potentials and some special types of measures.
\begin{theorem} \label{thm:VP} [See \cite{urbanskiVolumeFinerThermodynamic2022} Theorem 17.3.4.] 
    If $\varphi:E_A^{\infty}\to \RR$ is an acceptable potential and $A$ is a finitely irreducible matrix, then 
    \begin{equation}\label{eq:VarPrinc}
        P(\varphi) = \sup_{\mu\in \MM}\left\{h_{\mu}(\sigma)+\int_{E_A^{\infty}}\varphi\; d\mu\right\},
    \end{equation}
    where the supremum is taken over $\MM$, the set of all $\sigma$-invariant probability measures $\mu$ on $E_A^{\infty}$ such that $\displaystyle\int_{E_A^{\infty}}\varphi\; d\mu>-\infty$. In fact, the supremum can be restricted to the subset of those measures that are ergodic.
\end{theorem}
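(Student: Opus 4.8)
The plan is to establish the two inequalities $\sup_{\mu\in\MM}\{h_\mu(\sigma) + \int \varphi\, d\mu\} \le P(\varphi)$ and $P(\varphi) \le \sup_{\mu\in\MM}\{h_\mu(\sigma)+\int\varphi\, d\mu\}$ separately: the first by a direct entropy estimate against the partition functions $Z_n(\varphi)$ via Gibbs' inequality, and the second by exhausting the countable alphabet with finite irreducible subsystems and invoking the classical compact variational principle on each.

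For the first (and easier) inequality, fix $\mu\in\MM$ and let $\alpha = \{[e]\}_{e\in E}$ be the partition of $E_A^\infty$ into one-cylinders, with $\alpha^n = \bigvee_{i=0}^{n-1}\sigma^{-i}\alpha$ the partition into $n$-cylinders. First I would write
\[
H_\mu(\alpha^n) + \int_{E_A^\infty} S_n\varphi\, d\mu = \sum_{w\in E_A^n}\mu([w])\Bigl(-\log\mu([w]) + \tfrac{1}{\mu([w])}\int_{[w]} S_n\varphi\, d\mu\Bigr),
\]
bound $\int_{[w]} S_n\varphi\, d\mu \le \mu([w])\sup_{[w]} S_n\varphi$, and apply the elementary inequality $\sum_w p_w(-\log p_w + a_w) \le \log\sum_w e^{a_w}$ (sharp at $p_w \propto e^{a_w}$) with $a_w = \sup_{[w]} S_n\varphi$ to obtain $H_\mu(\alpha^n) + \int S_n\varphi\, d\mu \le \log Z_n(\varphi)$. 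Since $\mu$ is $\sigma$-invariant, $\int S_n\varphi\, d\mu = n\int\varphi\, d\mu$; dividing by $n$ and letting $n\to\infty$ gives $h_\mu(\sigma,\alpha) + \int\varphi\, d\mu \le P(\varphi)$. Because $\alpha$ is a one-sided generator for $\sigma$, I would invoke the Kolmogorov--Sinai theorem to replace $h_\mu(\sigma,\alpha)$ by $h_\mu(\sigma)$, and take the supremum over $\mu\in\MM$. Acceptability of $\varphi$ (that is, $\osc(\varphi)<\infty$) is precisely what guarantees that $\sup_{[w]}S_n\varphi$ stays comparable to the Birkhoff sums along $[w]$, so that $\log Z_n(\varphi)$ genuinely computes $P(\varphi)$.

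For the reverse inequality I would exhaust $E$ by an increasing sequence of finite subalphabets $F$, chosen (using finite irreducibility, so that a fixed finite set $I$ of connecting words may be adjoined) so that each restricted subsystem $E_{F,A}^\infty$ is itself irreducible and its pressure $P_F(\varphi)$ increases to $P(\varphi)$. On each finite irreducible subshift the system is compact and the classical variational principle furnishes an ergodic equilibrium state $\mu_F$ with $P_F(\varphi) = h_{\mu_F}(\sigma) + \int\varphi\, d\mu_F$. Regarding each $\mu_F$ as a $\sigma$-invariant measure on the full space $E_A^\infty$ (so $\mu_F\in\MM$), I would conclude
\[
P(\varphi) = \lim_F P_F(\varphi) = \lim_F\Bigl(h_{\mu_F}(\sigma) + \int\varphi\, d\mu_F\Bigr) \le \sup_{\mu\in\MM}\Bigl\{h_\mu(\sigma)+\int\varphi\, d\mu\Bigr\}.
\]
Since the $\mu_F$ are ergodic, the same chain already exhibits the supremum as a limit over ergodic measures; alternatively, the reduction to ergodic measures follows from the ergodic decomposition together with the affineness of $\mu\mapsto h_\mu(\sigma)+\int\varphi\, d\mu$.

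The main obstacle is twofold and specific to the countable-alphabet setting. First, the entropy identity $h_\mu(\sigma)=h_\mu(\sigma,\alpha)$ requires care when $H_\mu(\alpha)=\infty$; I would handle this by approximating $\alpha$ from within by finite subpartitions and using that $\alpha$ generates the Borel $\sigma$-algebra under $\sigma$. Second, and more seriously, the approximation $P_F(\varphi)\nearrow P(\varphi)$ is not automatic: it relies essentially on finite irreducibility to keep the truncated subsystems irreducible while still capturing the full asymptotic growth of the partition functions. Controlling the connecting words drawn from the finite set $I$ and showing they contribute negligibly to the exponential growth rate is where the real work lies.
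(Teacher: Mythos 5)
The paper does not actually prove this theorem: it is imported verbatim from \cite{urbanskiVolumeFinerThermodynamic2022} (Theorem 17.3.4), so there is no internal argument to compare against. Your outline is correct and reproduces the standard proof of that cited result: the inequality $h_{\mu}(\sigma)+\int\varphi\,d\mu\le P(\varphi)$ via the Jensen/Gibbs estimate on the countable cylinder partition (with exactly the finite-entropy and generator caveats you flag, which are resolved because $P(\varphi)<\infty$ forces $Z_n(\varphi)<\infty$ under finite irreducibility, hence $H_{\mu}(\alpha^n)<\infty$), and the reverse inequality via exhaustion by finite subalphabets enlarged by the letters occurring in the connecting set $I$, the approximation lemma $\sup_{F}P_F(\varphi)=P(\varphi)$, and the classical compact variational principle on each finite irreducible subshift, whose ergodic equilibrium states lie in $\MM$ since $\varphi$ is bounded on each finite-alphabet subsystem. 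The one step you leave unexecuted, the approximation lemma, is precisely the key lemma of the reference's proof and is where finite irreducibility does its work, so your proposal follows essentially the same route as the source the paper cites.
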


\noindent Gibbs and equilibrium states are of note for the analysis completed in future sections, they are closely connected to the variational principal. 
\begin{definition}
    Let $\varphi:E_A^{\infty}\to\RR$ be an acceptable potential on a finitely irreducible shift space. An \textit{equilibrium state} $\mu_{\varphi}\in \mathcal{M}$ achieves the supremum in the variational principal \eqref{eq:VarPrinc} and  $\displaystyle\int_{E_A^{\infty}}\varphi d\mu_{\varphi}>-\infty$.
\end{definition}
\begin{definition}
    Let $\varphi:E_A^{\infty}\to\RR$ be an acceptable potential on a finitely irreducible shift space. A Borel probability measure $m_{\varphi}$ on $E_A^{\infty}$ is called a \textit{Gibbs state} for $\varphi$ if there exists a number $P\in \RR$ and a constant $C\geq 1$ such that for every $w\in E_A^{*}$ and every $\rho\in[w]$ the following string of inequalities holds:
    \begin{equation}\label{ineq:GSDef}
        C^{-1}\leq \frac{m_{\varphi}([w])}{\exp(S_{|w|}\varphi(\rho)-P|w|)}\leq C
    \end{equation}
\end{definition}
Potentials that are summable and H\"older continuous on cylinders have Gibbs states, and, under a mild integrability condition, Gibbs and equilibrium states coincide for finitely irreducible shifts.

\begin{theorem}\label{thm:Gibbs&EquilibriumStates} [See \cite{urbanskiVolumeFinerThermodynamic2022} Corollary 17.7.5.(c)] 
    Suppose that a potential $\varphi:E_A^{\infty}\to \RR$ is summable and H\"older continuous on cylinders and that the incidence matrix $A$ is finitely irreducible. Then the potential $\varphi$ has a unique $\sigma$-invariant Gibbs state $\mu_{\varphi}$, and this state is ergodic. If $\displaystyle \int_{E_A^{\infty}}\varphi\; d\mu_{\varphi}>-\infty$, then $\mu_{\varphi}$ is also the unique equilibrium state for $\varphi$.
\end{theorem}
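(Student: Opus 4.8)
The plan is to establish the three claims---existence and uniqueness of the Gibbs state, its ergodicity, and its identification with the unique equilibrium state---through the Ruelle--Perron--Frobenius (transfer operator) machinery adapted to the noncompact, countable-alphabet setting. First I would introduce the transfer operator $\LL_{\varphi}$ acting on bounded continuous functions by
\[
    \LL_{\varphi}g(\om) = \sum_{e\in E:\, A_{e\om_1}=1} e^{\varphi(e\om)} g(e\om),
\]
and observe that summability of $\varphi$ (Definition \ref{def:summable}) is exactly what guarantees $\LL_{\varphi}$ is well defined and bounded, while H\"older continuity on cylinders supplies, via Lemma \ref{lem:BVErgodicSums}, the uniform bounded-distortion estimates that later yield the Gibbs constant $C$. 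The first concrete goal is to produce an eigenmeasure for the dual operator, i.e.\ a Borel probability measure $m_{\varphi}$ with $\LL_{\varphi}^{*}m_{\varphi} = e^{P(\varphi)}m_{\varphi}$, together with a strictly positive bounded eigenfunction $\psi$ satisfying $\LL_{\varphi}\psi = e^{P(\varphi)}\psi$ and $\int \psi\, dm_{\varphi} = 1$. Because $E_A^{\infty}$ is not compact when $E$ is infinite, the eigenmeasure cannot be extracted from a naive weak-$*$ compactness argument; instead one runs a Schauder--Tychonoff fixed-point scheme on the normalized dual action, with summability controlling the tails. Setting $d\mu_{\varphi} = \psi\, dm_{\varphi}$ then produces a $\sigma$-invariant measure, and the two-sided inequality \eqref{ineq:GSDef} with $P = P(\varphi)$ follows by combining the conformality of $m_{\varphi}$ with the distortion bounds.

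For uniqueness of the Gibbs state I would argue that any two $\sigma$-invariant Gibbs states $\mu_1,\mu_2$ for $\varphi$ satisfy \eqref{ineq:GSDef} with a common $P$, so on every cylinder their ratios are squeezed between $C^{-2}$ and $C^{2}$; hence $\mu_1$ and $\mu_2$ are mutually absolutely continuous with Radon--Nikodym derivative bounded above and below. Finite irreducibility of $A$ provides the topological mixing needed to run the standard argument that the shift is ergodic with respect to $\mu_{\varphi}$: one approximates an arbitrary invariant set by cylinders and uses the Gibbs inequality together with the irreducibility-induced mixing to force its measure to $0$ or $1$. Ergodicity then forces the bounded Radon--Nikodym derivative between any two invariant Gibbs states to be constant $\mu$-a.e., so $\mu_1 = \mu_2$, giving simultaneously the uniqueness and the ergodicity claims.

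Finally, to identify $\mu_{\varphi}$ as the unique equilibrium state under the hypothesis $\int \varphi\, d\mu_{\varphi} > -\infty$, I would invoke the variational principle (Theorem \ref{thm:VP}) and show that the supremum is attained at $\mu_{\varphi}$. The computation rests on the Shannon--McMillan--Breiman theorem: for $\mu_{\varphi}$-a.e.\ $\om$ one has $\tfrac{1}{n}\log\mu_{\varphi}([\om|_n]) \to -h_{\mu_{\varphi}}(\sigma)$, while the Gibbs property rewrites $\tfrac{1}{n}\log\mu_{\varphi}([\om|_n])$ as $\tfrac{1}{n}S_n\varphi(\om) - P(\varphi)$ up to a vanishing error, and Birkhoff's theorem sends the first term to $\int\varphi\, d\mu_{\varphi}$. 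Equating the two limits yields $h_{\mu_{\varphi}}(\sigma) + \int\varphi\, d\mu_{\varphi} = P(\varphi)$, so $\mu_{\varphi}$ is an equilibrium state; the integrability hypothesis is precisely what keeps both the entropy and the integral finite so that this manipulation is legitimate. Uniqueness of the equilibrium state then follows by showing that any equilibrium state must itself satisfy the Gibbs inequality and appealing to the already-established uniqueness of Gibbs states.

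I expect the main obstacle to be the noncompactness of $E_A^{\infty}$ in the infinite-alphabet regime: the existence of the eigenmeasure--eigenfunction pair and the verification that the Gibbs constant $C$ can be taken uniform across all of the (infinitely many) cylinders are the steps where summability and the finite-irreducibility tail control must be deployed most carefully, in sharp contrast to the classical finite-state subshift, where compactness renders these points routine.
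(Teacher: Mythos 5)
This theorem is not proved in the paper at all: it is imported verbatim from Urba\'nski--Roy--Munday \cite{urbanskiVolumeFinerThermodynamic2022} (Corollary 17.7.5(c)), so there is no in-paper argument to compare yours against. Your sketch follows the same Ruelle--Perron--Frobenius strategy that the cited source develops --- eigenmeasure of the dual operator obtained with tail control from summability, invariant density $\psi$, Gibbs property from the bounded-distortion estimates of Lemma \ref{lem:BVErgodicSums}, uniqueness and ergodicity via the mutual boundedness of Gibbs states plus the invariance of the Radon--Nikodym derivative, and identification of the equilibrium state via Shannon--McMillan--Breiman and Birkhoff under the integrability hypothesis --- so in outline it is the ``same'' proof. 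The one step you should not present as routine is the last one: your claim that uniqueness of the equilibrium state ``follows by showing that any equilibrium state must itself satisfy the Gibbs inequality'' compresses what is itself a substantial theorem in the countable-alphabet setting (already nontrivial for finite alphabets, and harder here because the cylinder partition can have infinite entropy and $E_A^{\infty}$ is noncompact); that implication needs its own argument and cannot be read off from the existence and uniqueness of Gibbs states established earlier in your sketch.
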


\subsection{The Manhattan Region}
By analogy to hyperbolic surfaces (see \cite{pollicottWeilPeterssonMetrics2016}), we define the Manhattan region of pressure and its boundary, the Manhattan curve. The primary object of interest for us will be the Manhattan region and a particular subset of it which we call the multifractal region.
\begin{definition} For the family $t\log\Phi'+q F$ the set $D\subseteq \mathbb{R}^2$ given by 
\[
    D := \left\{(t,q)\in \RR^2\;\bigg|\; \sum_{i\in \NN} \|\exp(f_i)\|^q_{X_{t(i)}}\cdot \||\phi_i'|\|^t_{X_{t(i)}}<\infty\right\}
\]
is called \textbf{the Manhattan region of $P(t,q)$}. Its boundary, $\partial D$, is called \textbf{the Manhattan Curve of $P(t,q)$} Further, for a finitely irreducible CGDMS $\Phi$ with Bowen parameter $h$, we define 
\[
    D_0 := \{(t,q)\in D\; |\; 0\leq t\leq h\}
\]
as  \textbf{the multifractal region of $\Phi$ with respect to $F$}. 
\end{definition} 

\noindent  When $(t,q)\in D$ by Proposition \ref{prop:JustifyManhattanRegion} we have that $P(t,q)<\infty$. Note as well that on $\text{int}(D)$ the pressure $P(t,q)$ is real analytic (see \cite[Theorems 20.1.11, 20.2.3]{urbanskiVolumeFinerThermodynamic2022}). Under a cofinite regularity assumption on $\Phi$ and a comparability assumption between the families $F$ and $\log\Phi'$ we can show that $D$ is open.
\begin{lemma}\label{lem:Dopen}
If there exists $\alpha>0$ and $\beta,\gamma\in \RR$ such that 
\[
    -\alpha\log|\phi_i'|+\gamma \leq f_i \leq -\alpha\log|\phi_i'| +\beta
\]
and $\Phi$ is cofinitely regular, then $D\subset\mathbb{R}^2_{\text{std}}$ is open with respect to the standard topology.
\end{lemma}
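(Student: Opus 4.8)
The plan is to show that the series defining $D$ converges precisely when a single affine combination of $t$ and $q$ exceeds the finiteness parameter $\theta$, so that $D$ is literally an open half-plane. Write $a_i := \||\phi_i'|\|_{X_{t(i)}}$, so that $a_i \le s_0 < 1$ for the common contraction ratio $s_0$ and $a_i \to 0$ by \eqref{eqn:EndBehaviorAlongAlphabet}. The $i$-th term of the defining series is $\|\exp(f_i)\|^q_{X_{t(i)}}\cdot a_i^{t}$, and the whole point is to replace the first factor by a power of $a_i$.

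First I would exponentiate the comparability hypothesis to get, pointwise on $X_{t(i)}$,
\[
    e^{\gamma}\,|\phi_i'|^{-\alpha} \le \exp(f_i) \le e^{\beta}\,|\phi_i'|^{-\alpha}.
\]
Taking sup norms and invoking the bounded distortion property (property (4) of the CGDMS definition yields a uniform constant $K \ge 1$ with $\sup_{X_{t(i)}}|\phi_i'| \le K\inf_{X_{t(i)}}|\phi_i'|$ for every $i$) converts $\sup_{X_{t(i)}}|\phi_i'|^{-\alpha} = (\inf_{X_{t(i)}}|\phi_i'|)^{-\alpha}$ into a two-sided comparison with $a_i^{-\alpha}$; explicitly $e^{\gamma}a_i^{-\alpha} \le \|\exp(f_i)\|_{X_{t(i)}} \le e^{\beta}K^{\alpha}a_i^{-\alpha}$. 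Raising to the power $q$ and splitting into the cases $q \ge 0$ and $q < 0$ (which merely swaps the two constants) gives, for each fixed $(t,q)$, positive constants $c(q) \le C(q)$ with
\[
    c(q)\,a_i^{t-\alpha q} \le \|\exp(f_i)\|^q_{X_{t(i)}}\,a_i^{t} \le C(q)\,a_i^{t-\alpha q} \quad\text{for all } i.
\]
Summing over $i$, the series defining $D$ converges if and only if $\sum_{i} a_i^{t-\alpha q} < \infty$.

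It then remains to identify this condition. Since $a_i \to 0$, the series $\sum_i a_i^{u}$ is a Dirichlet-type series in $u$, monotonically decreasing in $u$, whose abscissa of convergence I claim is exactly $\theta$: by Proposition \ref{prop:JustifyManhattanRegion} with $q=0$ together with bounded distortion one has $P(u) < \infty \iff \Zt_1(u,0) < \infty \iff \sum_i a_i^{u} < \infty$, so $\{u : \sum_i a_i^{u} < \infty\}$ is upward closed with infimum $\theta$. Here cofinite regularity does the essential work: $P(\theta) = \infty$ forces $\sum_i a_i^{\theta} = \infty$, so the convergence set is the \emph{open} half-line $(\theta,\infty)$ rather than $[\theta,\infty)$. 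Consequently
\[
    D = \{(t,q)\in\RR^2 : t - \alpha q > \theta\},
\]
an open half-plane, hence open in the standard topology.

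The main obstacle is not any single estimate but keeping the two-sided comparisons honest through the sign of $q$ and the distortion step, and, more conceptually, recognizing that cofinite regularity is exactly what excludes the boundary line $t-\alpha q = \theta$ from $D$; without it the identical computation would yield only the closed half-plane $\{t-\alpha q \ge \theta\}$, which is not open.
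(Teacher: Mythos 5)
Your proposal is correct and follows essentially the same route as the paper's proof: both use the comparability hypothesis to show $\Zt_1(t,q)\asymp \Zt_1(t-\alpha q,0)=\sum_i \||\phi_i'|\|^{t-\alpha q}_{X_{t(i)}}$, and then use the definition of $\theta$ together with cofinite regularity (which forces divergence at $u=\theta$) to identify $D$ as the open half-plane $\{(t,q): t-\alpha q>\theta\}$. If anything, your write-up is more careful than the paper's on two points it glosses over — the $\sup$/$\inf$ discrepancy in passing from pointwise bounds to sup norms (which you handle with bounded distortion) and the case split on the sign of $q$ — and your direct identification of $D$ avoids the paper's somewhat convoluted contradiction framing.
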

\begin{proof}
    Suppose not, then there exsits $(t,q)\in \partial D\cap D$. Since $(t,q)\in D$ we have that 
    \[
       \Zt_1(t,q) = \sum_{i\in \NN} \|\exp(f_i)\|^q_{X_{t(i)}}\cdot \||\phi_i'|\|^t_{X_{t(i)}}<\infty.
    \]
    Note that for a fixed $q\in \mathbb{R}$ the function $\Zt_1(\cdot,q)$ is non-increasing, thus, since $(t,q)\in \partial D$, for every $\varepsilon>0$ given $(t-\varepsilon, q)\not\in D$. By the comparability assumption we have that
    \[
        \sum_{i=1}^{\infty}e^{\gamma q}\|\vert\phi_i'\vert|\|_{X_{t(i)}}^{-\alpha q+s} \leq \Zt_1(s,q) \leq \sum_{i=1}^{\infty}e^{\beta q}\|\vert\phi_i'\vert|\|_{X_{t(i)}}^{-\alpha q+s}.
    \]
    And so, $\Zt_1(s,q)\asymp \Zt_1(-\alpha q+s,0)$. Hence by definition of $\theta$ and from Proposition \ref{prop:JustifyManhattanRegion}, $(t,q)\in D$ if and only if $-\alpha q +t>\theta$. So there exists an $\varepsilon>0$ such that $-\alpha q +t -\varepsilon >\theta$. Let $a = \frac{\varepsilon}{\alpha v_2-v_1}$. Then $a>0$ and $\varepsilon = -av_1+a\alpha v_2$ and so, for this $a$,  $(t-av_1, q+av_2) = (t,q)+a\vec{v}\in D$, a contradiction. \hfill $\lightning$\newline
    Hence it must be the case that $D\cap \partial D = \emptyset$, and so $D\subset \mathbb{R}^2$ does not contain any of its boundary points. Thus, $D$ is open.
\end{proof}

\begin{remark}\label{rmk:LyaDistortion}
    The comparability condition can be framed in terms of families of functions as well. In this case, $F = -\alpha\log\Phi'+H$ where $H$ is a bounded H\"older family of functions. As evidenced by the Khintchine Decomposition given in \cite{fanKhintchineExponentsLyapunov2009}, these distortions of the Lyapunov family can yield different multifractal decompositions. 
\end{remark}

\begin{lemma}\label{lem:FBoundedDopen}
    If $F$ is bounded and $\Phi$ is cofinitely regular, then $D\subset \RR^2$ is open. In fact, it is the open half plane $\textup{Int}_{\RR^2}(H^+_{\theta}) = \{(t,q)\in\RR^2\:|\; t> \theta\}$.
\end{lemma}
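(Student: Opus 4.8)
The plan is to show that under the boundedness hypothesis the parameter $q$ is irrelevant to membership in $D$, so that $D$ collapses to the finiteness half-plane of the pure geometric sum $\Zt_1(\cdot,0)$, whose left endpoint is exactly $\theta$ and is excluded precisely by cofinite regularity.

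First I would exploit the boundedness of $F$. Since $F$ is bounded there is some $M>0$ with $\|\exp(f_i)\|_{X_{t(i)}}\in[e^{-M},e^{M}]$ for every $i\in\NN$. Consequently, for a fixed $q\in\RR$ the factor $\|\exp(f_i)\|^q_{X_{t(i)}}$ lies between the two positive constants $e^{-|q|M}$ and $e^{|q|M}$, uniformly in $i$. Pulling these constants out of the sum defining $\Zt_1$ yields the comparability
\[
e^{-|q|M}\,\Zt_1(t,0)\le \Zt_1(t,q)\le e^{|q|M}\,\Zt_1(t,0),
\]
so that $\Zt_1(t,q)\asymp \Zt_1(t,0)$ with constants depending only on $q$. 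In particular $\Zt_1(t,q)<\infty$ if and only if $\Zt_1(t,0)=\sum_{i\in\NN}\||\phi_i'|\|^t_{X_{t(i)}}<\infty$, and this equivalence does not depend on $q$.

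Next I would analyze the one-variable sum $\Zt_1(\cdot,0)$. Because every generator satisfies $\||\phi_i'|\|_{X_{t(i)}}\le s<1$, the map $t\mapsto \Zt_1(t,0)$ is non-increasing, so its finiteness set is an interval unbounded to the right. By Proposition \ref{prop:JustifyManhattanRegion} we have $\Zt_1(t,0)<\infty \iff P(t,0)=P(t)<\infty$, whence this finiteness set is either $(\theta,\infty)$ or $[\theta,\infty)$, where $\theta=\inf\{t : P(t)<\infty\}$. Cofinite regularity asserts exactly that $P(\theta)=\infty$, which rules out the endpoint: $\Zt_1(\theta,0)=\infty$. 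Hence $\{t : \Zt_1(t,0)<\infty\}=(\theta,\infty)$ precisely.

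Combining the two reductions gives $(t,q)\in D \iff \Zt_1(t,q)<\infty \iff t>\theta$, with no constraint on $q$. Therefore $D=\{(t,q)\in\RR^2 : t>\theta\}=\textup{Int}_{\RR^2}(H^+_{\theta})$, which is an open half-plane and in particular open in the standard topology. There is no serious obstacle here beyond bookkeeping; the only points requiring care are that the comparability constants, though depending on $q$, remain uniform in the summation index $i$ (so the two-sided bound survives passage to the infinite sum), and that cofinite regularity is invoked precisely to exclude the boundary line $t=\theta$ from $D$, which is exactly what forces the half-plane to be the open one rather than the closed one.
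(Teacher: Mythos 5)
Your proof is correct and follows essentially the same route as the paper's: boundedness of $F$ makes the factor $\|\exp(f_i)\|^q$ uniformly comparable to constants depending only on $q$, so membership in $D$ reduces to finiteness of the geometric sum $\Zt_1(t,0)$, and cofinite regularity is invoked exactly to exclude the line $t=\theta$. If anything, your two-sided bound with exponent $|q|$ is tidier than the paper's case analysis, which is careless about the reversal of inequalities when $q<0$ (it uses the lower bound $K$ for all $q$) and contains a garbled final chain of inequalities comparing against $\Zt_1(\theta,0)=\infty$.
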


\begin{proof}
     Let $M>F>K$ for some $M,K\in \RR$. Note that since $\Phi$ is cofinitely regular, we know that $(\theta,0)\notin D$ and $(t,0)\not\in D$ for all $t<\theta$ by definition of the finiteness parameter $\theta$. Then consider for any such $t<\theta$, we have that 
    \begin{equation}
        \Zt_1(t,q) > \exp(K)^q \Zt_1(t,0) = \infty. \; \forall q\in\RR
    \end{equation}
    Hence $(t,q)\not\in D$. Now consider the line $t = \theta$. Then 
    \begin{equation}
        \Zt_1(\theta,q) \geq \exp(K)^q \Zt_1(\theta,0) = \infty. \; \forall q\in\RR
    \end{equation}
    Thus, the line $t=\theta$ is not a subset of $D$, in fact, as $q$ was arbitrary, the line $t=\theta$ is the Manhattan curve of $P(t,q)$. Now, for all other $(t,q)$ where $t>\theta$,
    \begin{equation}
        \Zt_1(t,q) < \exp(M)^q \Zt_1(t,0) <  \exp(M)^q \Zt_1(\theta,0) < \infty.
    \end{equation}
    Hence, $D$ is the open half plane $\text{Int}_{\RR^2}(H^+_{\theta}) = \{(t,q)\in\RR^2\:|\; t> \theta\})$.
\end{proof}

When we drop cofinite regularity from the previous proposition, $D = H^+_{\theta}$. We will not analyze this case in this paper, but the previous proof requires only a few small changes, so it is a nice fact to include here. 
\begin{corollary}
    If $F$ is bounded and $\Phi$ is not cofinitely regular, then $D\subset \RR^2$ is the closed half plane $H^+_{\theta} = \{(t,q)\in\RR^2\:|\; t\geq \theta\}$.
\end{corollary}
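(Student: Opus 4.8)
The plan is to mirror the proof of Lemma \ref{lem:FBoundedDopen} with only minor modifications, tracking where cofinite regularity was used and replacing the strict exclusion of the line $t=\theta$ with its inclusion. First I would fix bounds $M > F > K$ for the bounded family $F$, exactly as before. The key structural fact from the previous proof is the comparison $\Zt_1(t,q) \asymp \Zt_1(t,0)$ uniformly over $q$ on compact $q$-ranges, coming from $\exp(K)^q \Zt_1(t,0) \leq \Zt_1(t,q) \leq \exp(M)^q \Zt_1(t,0)$ (with the appropriate orientation depending on the sign of $q$). This reduces the question of $(t,q)\in D$ entirely to the finiteness of $\Zt_1(t,0)$, i.e.\ to the location of $t$ relative to the finiteness parameter $\theta$, by Proposition \ref{prop:JustifyManhattanRegion} and the definition of $\theta$.

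For the region $t<\theta$ the argument is verbatim the same: $\Zt_1(t,0)=\infty$ forces $\Zt_1(t,q)=\infty$ for every $q$, so no such point lies in $D$. For $t>\theta$ the upper bound $\Zt_1(t,q) < \exp(M)^q \Zt_1(t,0) < \infty$ again shows these points all lie in $D$. The only point where cofinite regularity entered the previous proof is the treatment of the line $t=\theta$: there we used that $\Phi$ cofinitely regular means $(\theta,0)\notin D$, equivalently $\Zt_1(\theta,0)=\infty$ via $P(\theta)=\infty$, which pushed the whole line outside $D$. Here I would instead invoke that when $\Phi$ is \emph{not} cofinitely regular, $P(\theta)<\infty$, hence $\Zt_1(\theta,0)<\infty$ by Proposition \ref{prop:JustifyManhattanRegion}; then the same comparison $\Zt_1(\theta,q) < \exp(M)^q \Zt_1(\theta,0) < \infty$ shows the entire line $t=\theta$ now lies \emph{inside} $D$.

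Combining the three cases, $D = \{(t,q)\in\RR^2 \mid t\geq\theta\} = H^+_\theta$, the closed half-plane, which is the claim. The main obstacle — really the only subtle point — is justifying that non-cofinite-regularity gives $\Zt_1(\theta,0)<\infty$ rather than merely $P(\theta)\leq\infty$; this is exactly the content of the definition of cofinite regularity ($P(\theta)=\infty$) together with its negation, so $P(\theta)<\infty$ is immediate, and Proposition \ref{prop:JustifyManhattanRegion} converts this into the finiteness of the single partition function $\Zt_1(\theta,0)$. One should double-check the orientation of the inequalities for negative $q$, but since $F$ is two-sided bounded both an upper and a lower exponential comparison are available for each sign of $q$, so the finiteness conclusions go through for all $q\in\RR$ as needed.
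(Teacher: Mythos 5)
Your proposal is correct and follows essentially the same route as the paper: fix $M>F>K$, reduce membership in $D$ to the finiteness of $\Zt_1(t,0)$ via the two-sided exponential comparison, and use that non-cofinite-regularity means $P(\theta)<\infty$, hence $(\theta,0)\in D$ by Proposition \ref{prop:JustifyManhattanRegion}, so the line $t=\theta$ is absorbed into $D$. Your remark about swapping the roles of $K$ and $M$ for negative $q$ is in fact slightly more careful than the paper's write-up, which states the comparisons with a fixed orientation for all $q$.
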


\begin{proof}
    Let $M>F>K$ for some $M,K\in \RR$. Note that since $\Phi$ is not cofinitely regular, we know that $(\theta,0)\in D$ and $(t,0)\not\in D$ for all $t<\theta$ by definition of the finiteness parameter $\theta$. Then consider for any such $t<\theta$, we have that
    \begin{equation}
        \Zt_1(t,q) > \exp(K)^q \Zt_1(t,0) = \infty.\; \forall q\in\RR
    \end{equation}
    Hence $(t,q)\not\in D$. Now consider the line $t = \theta$. Then 
    \begin{equation}
        \Zt_1(\theta,q) < \exp(M)^q \Zt_1(\theta,0) < \infty. \; \forall q\in\RR
    \end{equation}
    Thus, the line $t=\theta$ is a subset of $D$ the rest of the proof is identical to the previous proposition.
\end{proof}

By taking the contra-positive of the previous lemmas, and noting that $D$ is always a proper subset of $\RR^2$ in the infinite alphabet case and $D = \RR^2$ in the finite alphabet case, we yield the following two corollaries.

\begin{corollary} 
If $D$ is open, then $\Phi$ is cofinitely regular or $F$ is not bounded.
\end{corollary}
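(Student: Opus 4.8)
The plan is to argue by contrapositive: assuming the negation of the conclusion, namely that $\Phi$ is not cofinitely regular \emph{and} $F$ is bounded, I would show that $D$ fails to be open. The first point to record is that the finite alphabet case cannot occur under these hypotheses. Indeed, when $E$ is finite the partition function $\Zt_1(t,0)$ is a finite sum of terms $\||\phi_i'|\|^t$, hence finite for every $t\in\RR$, so $\theta=-\infty$; and since each contraction ratio satisfies $\||\phi_i'|\|<1$, we have $\Zt_1(t,0)\to\infty$ as $t\to-\infty$, whence $P(t)\to\infty$ and the system is cofinitely regular. Thus the assumption that $\Phi$ is not cofinitely regular forces $E$ to be infinite.

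In the infinite alphabet setting I would next confirm that $\theta$ is a finite real number. By the end behavior \eqref{eqn:EndBehaviorAlongAlphabet} we have $\||\phi_i'|\|\to 0$, while each term $\||\phi_i'|\|^t\geq 1$ for $t\leq 0$ because $\||\phi_i'|\|<1$; hence $\Zt_1(t,0)$ diverges for all $t\leq 0$ and $\theta\geq 0$. In particular $\theta\in\RR$, so the closed half plane $H^+_{\theta}=\{(t,q)\in\RR^2\;|\;t\geq\theta\}$ is a proper subset of $\RR^2$ whose boundary is the line $t=\theta$.

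With $F$ bounded and $\Phi$ not cofinitely regular, I would now invoke the preceding corollary, which identifies $D=H^+_{\theta}$. This set is not open: any boundary point $(\theta,q)$ belongs to $D$, yet every open ball about it contains points $(t,q)$ with $t<\theta$, and by the definition of $\theta$ together with Proposition \ref{prop:JustifyManhattanRegion} these satisfy $\Zt_1(t,q)=\infty$ and therefore lie outside $D$. Hence $D$ is not open, establishing the contrapositive. I expect the only genuine subtlety to be this case bookkeeping: the finite alphabet situation, where $D=\RR^2$ is open and cofinite regularity is automatic, must be ruled out first so that the appeal to the preceding corollary really does produce a closed half plane with a finite bounding line, at which openness visibly fails.
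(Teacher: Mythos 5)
Your proposal is correct, and it rests on the same key lemma as the paper: both arguments negate the conclusion and invoke the preceding corollary to identify $D$ as the closed half plane $H^+_{\theta}$. The difference is in the finishing move. The paper observes that $D$ would then be a proper \emph{clopen} subset of $\RR^2$, which is impossible by connectedness, and folds the finite-alphabet case into that dichotomy ($D=\RR^2$ forces $\theta=-\infty$ and cofinite regularity, contradicting the hypothesis). You instead dispose of the finite-alphabet case up front (proving, rather than merely asserting, that a finite alphabet forces cofinite regularity via $P(t)\to\infty$ as $t\to-\infty$), check that $\theta\geq 0$ is finite in the infinite-alphabet case so the half plane genuinely has a boundary line, and then show non-openness directly: the boundary points $(\theta,q)$ lie in $D$ but every neighborhood of them contains points $(t,q)$ with $t<\theta$, which lie outside $D$. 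Your route is marginally more elementary (no appeal to connectedness) and more self-contained, since it makes explicit two facts the paper leaves implicit; the paper's clopen argument is slicker and shorter. One small looseness on your side: the implication $\Zt_1(t,0)\to\infty \Rightarrow P(t)\to\infty$ as $t\to-\infty$ is not immediate from submultiplicativity alone (the supermultiplicativity constant depends on the potential); it is cleaner to get this from the variational principle, since $P(t)\geq t\int_{E_A^{\infty}}\Xip\,d\mu\to\infty$ for any fixed invariant $\mu$ with $\int_{E_A^{\infty}}\Xip\,d\mu<0$. This does not affect the validity of the overall argument.
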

\begin{proof}
    If not, then $D$ is open and $F$ is bounded and $\Phi$ is not cofinitely regular, so $D = H_{\theta}^+$ and thus is clopen. However $\RR^2$ is connected so $D$ cannot be a proper clopen subset of $\RR^2$. So either $D = \RR^2$ which is open and $\Phi$ would be cofinitely regular with $\theta = -\infty$ (the case of finite alphabet systems) or, we have a contradiction and $D\subseteq \RR^2$ is open and $F$ is not bounded or $\Phi$ is cofintely regular as desired. 
\end{proof}

\begin{corollary}
If $D$ is neither open nor closed, then $F$ is not bounded.
\end{corollary}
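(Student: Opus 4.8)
The plan is to prove the contrapositive: assuming that $F$ is bounded, I would show that $D$ is necessarily either open or closed, which is precisely the negation of the hypothesis ``$D$ is neither open nor closed.'' The whole argument combines Lemma \ref{lem:FBoundedDopen} with the Corollary immediately preceding this statement, exploiting the fact that cofinite regularity is a genuine dichotomy: every CGDMS $\Phi$ either satisfies $P(\theta) = \infty$ or it does not, with no third option.

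Concretely, I would fix the assumption that $F$ is bounded and split into the two cases according to whether $\Phi$ is cofinitely regular. If $\Phi$ is cofinitely regular, then Lemma \ref{lem:FBoundedDopen} identifies $D = \textup{Int}_{\RR^2}(H^+_{\theta}) = \{(t,q)\in\RR^2 \mid t > \theta\}$ as the open half-plane, so $D$ is open. In the complementary case where $\Phi$ is not cofinitely regular, the preceding Corollary identifies $D = H^+_{\theta} = \{(t,q)\in\RR^2 \mid t \geq \theta\}$ as the closed half-plane, so $D$ is closed. The finite-alphabet situation is already subsumed, since there $D = \RR^2$ is clopen and in particular open. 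Thus in every configuration the boundedness of $F$ forces $D$ to be open or closed.

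To finish, I would take the contrapositive: if $D$ is neither open nor closed, then the hypothesis that $F$ is bounded cannot hold, i.e. $F$ is not bounded, as desired. I do not anticipate any genuine obstacle here, since the result is a purely logical recombination of the two prior statements; the only point requiring care is confirming that Lemma \ref{lem:FBoundedDopen} and the preceding Corollary together exhaust all the boundedness-plus-regularity configurations, so that when $F$ is bounded no third possibility for the topological type of $D$ can occur.
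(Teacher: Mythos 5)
Your proposal is correct and takes essentially the same route as the paper, which obtains this corollary precisely by taking the contrapositive of Lemma \ref{lem:FBoundedDopen} (bounded $F$ plus cofinite regularity gives $D$ open) together with the preceding corollary (bounded $F$ without cofinite regularity gives $D$ closed), the cofinite-regularity dichotomy being exhaustive. Your handling of the finite-alphabet case ($D=\RR^2$, hence open) matches the paper's remark as well, so there are no gaps.
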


\subsection{Derivatives of Pressure}

\noindent Under a mild integrability assumption, we can describe the first derivatives of $P(t,q)$ as well. 

\begin{lemma}[ND]\label{lem:DerivativesOfPressure}
    Suppose $(t,q)\in D$ and that $\displaystyle \int_{E_A^{\infty}}(|\tilde{f}|+|\Xip|)\;d\tmu_{t,q}<\infty$ where $\tmu_{t,q}:= \mu_{f_{t,q}}$ from Theorem \ref{thm:Gibbs&EquilibriumStates}, then for $F$ strictly positive,
    \begin{equation}
        \frac{\partial P}{\partial t} = \int_{E_A^{\infty}}\Xip\;d\tmu_{t,q}<0 \qquad 
        \text{ and } \qquad \frac{\partial P}{\partial q} = \int_{E_A^{\infty}}\tilde{f}\;d\tmu_{t,q}>0.
    \end{equation}
\end{lemma}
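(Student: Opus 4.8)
The plan is to read off the two partial derivatives from the variational principle together with the convexity of pressure in each coordinate, and then invoke the known real-analyticity of $P$ to upgrade a one-sided convex subgradient bound into an exact derivative.

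First I would check that the hypotheses put us in a position to apply Theorem \ref{thm:Gibbs&EquilibriumStates}. For $(t,q)\in D$ the amalgamated function $f_{t,q}=t\Xip+q\tilde{f}$ is summable and H\"older continuous on cylinders, hence it carries a unique $\sigma$-invariant Gibbs state $\tmu_{t,q}$. The integrability assumption $\int_{E_A^{\infty}}(|\tilde{f}|+|\Xip|)\,d\tmu_{t,q}<\infty$ forces $\int_{E_A^{\infty}}f_{t,q}\,d\tmu_{t,q}>-\infty$, so by that theorem $\tmu_{t,q}$ is also the unique equilibrium state; in particular it attains the supremum in the variational principle \eqref{eq:VarPrinc}, giving
\[
    P(t,q)=h_{\tmu_{t,q}}(\sigma)+t\int_{E_A^{\infty}}\Xip\,d\tmu_{t,q}+q\int_{E_A^{\infty}}\tilde{f}\,d\tmu_{t,q}.
\]
Under the standing hypotheses $D$ is open (Lemma \ref{lem:Dopen} or Lemma \ref{lem:FBoundedDopen}), so a full neighbourhood of $(t,q)$ lies in $D$ and the pressure is real analytic there (Theorems 20.1.11 and 20.2.3 of \cite{urbanskiVolumeFinerThermodynamic2022}); in particular $P$ is differentiable at $(t,q)$.

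Next I would fix $q$ and set $g(t)=P(t,q)$. Writing $g(t)=\sup_{\mu\in\MM}\{h_\mu(\sigma)+q\int\tilde{f}\,d\mu+t\int\Xip\,d\mu\}$ exhibits $g$ as a supremum of functions affine in $t$, hence convex. Evaluating the bracket at the single measure $\mu_0:=\tmu_{t,q}$, for every $t'$ with $(t',q)\in D$ we obtain
\[
    g(t')\;\geq\; h_{\mu_0}(\sigma)+q\int_{E_A^{\infty}}\tilde{f}\,d\mu_0+t'\int_{E_A^{\infty}}\Xip\,d\mu_0,
\]
and the right-hand side equals $g(t)+(t'-t)\int_{E_A^{\infty}}\Xip\,d\mu_0$ since $\mu_0$ attains the supremum at the base point. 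Thus $\int\Xip\,d\mu_0$ is a subgradient of the convex function $g$ at $t$; taking $t'>t$ and $t'<t$ shows it is squeezed between the left- and right-hand derivatives of $g$, which coincide because $g$ is differentiable. Hence $\partial P/\partial t=\int_{E_A^{\infty}}\Xip\,d\tmu_{t,q}$, and the identical argument with the coordinates exchanged gives $\partial P/\partial q=\int_{E_A^{\infty}}\tilde{f}\,d\tmu_{t,q}$.

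The signs are then pointwise. Every generator contracts with ratio $s<1$, so $\Xip(\om)=\log|\phi_{\om_1}'(\pi(\sigma\om))|\leq\log s<0$ for all $\om$, whence $\int\Xip\,d\tmu_{t,q}\leq\log s<0$; and since $F$ is strictly positive, $\tilde{f}\geq K>0$, so $\int\tilde{f}\,d\tmu_{t,q}\geq K>0$. The main obstacle is the second step: one must guarantee that the Gibbs state is genuinely the equilibrium state attaining the supremum (which is precisely what the integrability hypothesis buys through Theorem \ref{thm:Gibbs&EquilibriumStates}) and that differentiability holds at the point, so that the one-sided convex bound can be promoted to the stated equality rather than a mere inequality.
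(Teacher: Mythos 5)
Your proposal is correct and follows essentially the same route as the paper's proof: apply the variational principle with the equilibrium state $\tmu_{t,q}$ to obtain the subgradient inequality $P(t',q)\geq P(t,q)+(t'-t)\int_{E_A^{\infty}}\Xip\,d\tmu_{t,q}$, then squeeze the one-sided difference quotients from $t'>t$ and $t'<t$ to identify the derivative, with the identical argument in $q$ and the signs coming from $\Xip<0$ and $\tilde{f}\geq K>0$. You are somewhat more explicit than the paper about why $\tmu_{t,q}$ attains the supremum and why differentiability holds (via openness of $D$ and analyticity of $P$), but these are details the paper leaves implicit rather than a different method.
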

\begin{proof}
    The argument presented is similar to \cite[Theorem 16.4.10]{urbanskiVolumeFinerThermodynamic2022}. Since $(t,q)\in D$, the family $t\log\Phi'+q F$ is summable and H\"older, so, the amalgamated function $f_{t,q}$ is summable and holder continuous on cylinders. Hence, $\tmu_{t,q}$ exists. Then, for $t'> t$, by the variational principle (Theorem \ref{thm:VP}),
    \begin{align*}
        P(t',q)&\geq h(\tmu_{t,q})+\int_{E_A^{\infty}}(t'\Xip+q\tilde{f})\;d\tmu_{t,q}\\
        &= h(\tmu_{t,q})+\int_{E_A^{\infty}}f_{t,q}\;d\tmu+(t'-t)\int_{E_A^{\infty}}\Xip\;d\tmu_{t,q}\\
        &=P(t,q)+(t'-t)\int_{E_A^{\infty}}\Xip\;d\tmu_{t,q}.
    \end{align*}
    And so,
    \begin{equation}
        \frac{\partial P}{\partial t}(t,q) = \lim_{t'\to t^{+}}\frac{P(t',q)-P(t,q)}{t'-t}\geq \int_{E_A^{\infty}}\Xip\;d\tmu_{t,q}.
    \end{equation}
    Similarly, when $t'<t$ we have that 
    \begin{equation}
        \frac{\partial P}{\partial t}(t,q) = \lim_{t'\to t^{-}}\frac{P(t',q)-P(t,q)}{t'-t}\leq \int_{E_A^{\infty}}\Xip\;d\tmu_{t,q}<0.
    \end{equation}
    The argument for $\frac{\partial P}{\partial q}$ is identical. However, the resulting $q$-derivative is positive since $\tilde{f}\geq K>0$. Thus, the derivatives are as stated.
\end{proof}
We will also want to avoid the case in which our amalgamated functions are cohomologous to a constant. This, as we will see, will allow us to retrieve some information about the second derivatives of $P(t,q)$. Luckily, we have the following:

\begin{lemma}[ND]\label{lem:ND-Cohom} Let $\Phi$ be a cofinitely regular countably infinite CGDMS that is finitely irreducible and cofinitely regular and $F$ be bounded below. Then, for $(t,q)\in D$ the amalgamated function $f_{t,q} = t\Xip+q\tilde{f}$ is not cohomologous to a constant.
\end{lemma}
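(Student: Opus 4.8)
The plan is to argue by contradiction using periodic orbits, relying only on the elementary necessary direction of the Liv\v{s}ic theorem: if $f_{t,q}=c+u\circ\sigma-u$ for some $c\in\RR$ and some function $u\colon E_A^{\infty}\to\RR$, then for every $\sigma$-periodic point $p$ of period $n$ the coboundary telescopes and $S_nf_{t,q}(p)=nc$. Consequently it suffices to exhibit two periodic points of the \emph{same} period whose Birkhoff sums of $f_{t,q}$ differ; this rules out cohomology to a constant even in the weakest sense, with no regularity required of $u$.

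First I would produce an infinite family of short periodic orbits. By finite irreducibility together with the pigeonhole observation recorded just after the definition of finite irreducibility, there is a single connecting word $\tau\in E_A^{*}$ and an infinite set of letters $\mathcal{E}\subseteq E$ such that $x_e:=\overline{e\tau}=(e\tau)^{\infty}$ is a periodic point of the common period $N:=1+|\tau|$ for every $e\in\mathcal{E}$. Splitting the sum along the orbit, I would write $S_Nf_{t,q}(x_e)=f_{t,q}(x_e)+R_\tau(e)$, where $R_\tau(e)=\sum_{j=1}^{N-1}f_{t,q}(\sigma^j x_e)$ collects the contribution over the fixed block $\tau$. Because $\tau$ is fixed, $R_\tau(e)$ involves only the finitely many fixed conformal maps $\phi_{\tau_1},\dots,\phi_{\tau_{|\tau|}}$ and the fixed potentials $f_{\tau_1},\dots,f_{\tau_{|\tau|}}$ evaluated on compact sets; using bounded distortion (Lemma \ref{lem:BVErgodicSums}) together with the acceptability of $\Xip$ and $\tilde{f}$, these terms are bounded uniformly in $e$, so $|R_\tau(e)|\le C$ for a constant $C$ independent of $e\in\mathcal{E}$.

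The decisive step is to feed in membership in the Manhattan region. Since $x_e\in[e]$ we have $f_{t,q}(x_e)\le \sup_{[e]}f_{t,q}$, and hence $S_Nf_{t,q}(x_e)\le \sup_{[e]}f_{t,q}+C$. On the other hand $(t,q)\in D$ gives $\Zt_1(t,q)<\infty$, and by Proposition \ref{prop:JustifyManhattanRegion} (more precisely by the comparability $\Zt_1\asymp Z_1$ established in its proof) the cylinder partition function satisfies $\sum_{e\in E}\exp\!\big(\sup_{[e]}f_{t,q}\big)<\infty$. The terms of a convergent series tend to $0$, so $\sup_{[e]}f_{t,q}\to-\infty$ as $e\to\infty$, in particular along the infinite set $\mathcal{E}$. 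Combining, $S_Nf_{t,q}(x_e)\to-\infty$ along $\mathcal{E}$; this sequence is certainly non-constant, so there exist $e,e'\in\mathcal{E}$ with $S_Nf_{t,q}(x_e)\ne S_Nf_{t,q}(x_{e'})$. These are two periodic points of the same period $N$ with distinct Birkhoff sums, contradicting the necessary condition from the first paragraph; hence $f_{t,q}$ is not cohomologous to a constant.

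I expect the only genuine bookkeeping to be the uniform bound on $R_\tau(e)$, which is exactly the standard bounded-distortion estimate and is where acceptability of the amalgamated functions enters; the mechanism driving the contradiction is that the countably infinite alphabet forces $\sup_{[e]}f_{t,q}\to-\infty$ along an infinite family of equal-period orbits, so the infinite-alphabet hypothesis is essential (for a finite alphabet $\mathcal{E}$ would be finite and this route closes off). Cofinite regularity and the lower bound on $F$ serve mainly to keep the ambient setting non-degenerate, guaranteeing that $D$ is an open region on which the Gibbs states $\tmu_{t,q}$ and the acceptable amalgamated functions used above are available, rather than entering the combinatorial heart of the argument.
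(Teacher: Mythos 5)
Your proof is correct, and it takes a genuinely different route from the paper's. Both arguments start from the same combinatorial seed: finite irreducibility plus the pigeonhole remark gives a fixed connecting word $\tau$ and an infinite set of letters $e$ with $(e\tau)^{\infty}$ admissible, and both bound the $\tau$-block contribution to the Birkhoff sum uniformly in $e$ using acceptability of $\Xip$ and $\tilde f$. The difference is in how membership in $D$ is turned into a contradiction. The paper keeps the two potentials separate and argues by sign cases: for $t>0$ it uses $\||\phi_k'|\|\to 0$ to force $t\Xip((k\tau)^{\infty})\to-\infty$, hence $q\tilde f((k\tau)^{\infty})\to+\infty$, extracts the ratio limit $-\Xip/\tilde f\to q/t$, and then runs a two-sided sub-case analysis (approach from above or below, with auxiliary parameters $s(k)$, $q'$, $t'$) to exhibit terms of $\Zt_1(t,q)$ bounded away from zero; the cases $t<0$ and $t=0$ require separate treatment, the latter by a different two-periodic-point argument. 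You instead treat $f_{t,q}$ as a single summable potential: from $Z_1(f_{t,q})\leq Z_1(t,q)\leq \Zt_1(t,q)<\infty$ (the first inequality is immediate since $\sup_{[e]}f_{t,q}$ is a supremum over a subset of $X_{t(e)}$, the second is Proposition \ref{prop:JustifyManhattanRegion}), the terms of a convergent series tend to zero, so $\sup_{[e]}f_{t,q}\to-\infty$, and the equal-period sums $S_Nf_{t,q}((e\tau)^{\infty})$ tend to $-\infty$ along the infinite family, contradicting the telescoped identity $S_Nf_{t,q}=Nc$ forced by cohomology. This buys three things: no case analysis on the signs of $t$ and $q$ (in particular $t=0$ is absorbed automatically, where the paper needs a distinct argument); no essential use of cofinite regularity or the lower bound on $F$, which you correctly identify as ambient hypotheses rather than working parts; and a slightly stronger conclusion, since telescoping over periodic points excludes coboundaries $u-u\circ\sigma$ for an arbitrary real-valued $u$, whereas the paper's Birkhoff-average argument needs $u$ bounded. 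What the paper's longer route extracts that yours does not --- the forced ratio $q/t$ of the two amalgamated functions along the periodic family --- is not needed for the statement.
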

\begin{proof}
    Suppose first that $t\neq 0$ and $f_{t,q}$ is cohomologous to a constant. Then there is a bounded function $u$ for which there exists $C\in \RR$ such that
    \begin{equation}
        t\Xip+q\tilde{f} = u-u\circ\sigma+C.
    \end{equation}
    However, taking a Birkhoff average one obtains
    \begin{equation}
        \lim_{n\to\infty}\frac{1}{n}\sum_{j=0}^{n-1}(t\Xip+q\tilde{f})(\sigma^j(\om)) = C \qquad \forall \om\in E_A^{\infty}.
    \end{equation}
    
    Now suppose $t>0$. since $\Phi$ is finitely irreducible, $A$ is a finitely irreducible matrix, so there exists a finite set of finite words $I\subset E_A^*$ which witnesses the finite irreducibility of $E_A^{*}$. That is, for each $e_1,e_2\in E$ there exists a $\tau(e_1,e_2)\in I$ such that $e_1\tau(e_1,e_2)e_2$ is $A$-admissible. Set
    \begin{equation}
    E_{w} := \{k\in E\;|\; (kw)^{\infty}\in E_A^{*}\}\;\; \forall w\in I
    \end{equation} 
    By the pigeonhole principle, we may fix a $\tau\in I$ such that $|E_{\tau}|=\infty$. For this $\tau$ we have that 
    \begin{equation}
        \lim_{n\to\infty}\frac{1}{n}\sum_{j=0}^{n-1}(t\Xip+q\tilde{f})(\sigma^j((k\tau)^{\infty})) = C_k = C \qquad \forall k\in E_{\tau}.
    \end{equation}
    Since the point is periodic, the Birkhoff average is just the arithmetic mean of the potential evaluated at elements of the finite orbit so the equation simplifies to
    \begin{align}
        t\Xip(&(k\tau)^{\infty})+t\Xip((\tau k)^{\infty})+\dots +t\Xip((\tau_{|\tau|}k\tau_1\dots \tau_{|\tau|-1})^{\infty})\notag\\&+ q\tilde{f}((k\tau)^{\infty})+q\tilde{f}((\tau k)^{\infty})+\dots +q\tilde{f}((\tau_{|\tau|}k\tau_1\dots \tau_{|\tau|-1})^{\infty}) = (|\tau|+1)C_k,\; \forall k\in E_{\tau}.
    \end{align}
    Note that all terms except for $t\Xip((k\tau)^{\infty})$ and $ q\tilde{f}((k\tau)^{\infty})$ may be bounded since the amalgamated functions $\Xip$ and $\tilde{f}$ are H\"older continuous on cylinders and thus acceptable. In fact, the bound on these terms may be chosen such that it is only dependent on $\tau$. Now by (\ref{eqn:EndBehaviorAlongAlphabet}) and since $t>0$ we know that $\displaystyle\lim_{k\to\infty} t\Xip((k\tau)^{\infty}) = -\infty$, so  we must have that $\displaystyle\lim_{k\to\infty} q\tilde{f}((k\tau)^{\infty}) = \infty$. 
    And so,  
    \begin{equation}
        \lim_{k\to\infty} -\frac{\Xip((k\tau)^{\infty})}{\tilde{f}((k\tau)^{\infty})} = \frac{q}{t}.
    \end{equation}
    By passing to subsequences if necessary, we may suppose that the limit approaches its value from above or below. In the case that the limit approaches from below, we consider $q'>q$, then for all $k$ large enough, there exists an $s(k)\leq \frac{q}{q'}<1$ such that
    \begin{align}
        &-\frac{t\log|\phi_k'(\pi((\tau k)^{\infty}))|}{q' \tilde{f}((k\tau)^{\infty})}\leq s(k)<1\notag\\
        &\implies -t\log|\phi_k'(\pi((\tau k)^{\infty}))|-s(k)q'f((k\tau)^{\infty})\leq0\notag\\
        &\implies t\log |\phi_k'(\pi((\tau k)^{\infty}))|+s(k)q'f((k\tau)^{\infty})\geq0\notag\\
        &\implies \| |\phi_k'| \|^t_{X_{t(k)}}\cdot \|\exp(f_k)\|^{s(k)q'}_{X_{t(k)}} > 1
    \end{align}
    Now noting that $s(k)q'\leq q$ and that $\|\exp(f_k)\|_{X_{t(k)}} > 1$ for $k$ large enough, we obtain
    \begin{align}
         \| |\phi_k'| \|^t_{X_{t(k)}}\cdot \|\exp(f_k)\|^{q}_{X_{t(k)}}\geq1 \implies (t,q)\not\in D. \qquad \lightning
    \end{align}
    Hence it must be the case that the limit approaches its value from above. However, here we consider $t'<t$. Then for $k$ large enough there exists an $s(k)\geq \frac{t}{t'}>1$ such that
    \begin{align}
        &-\frac{t'\log|\phi_k'(\pi((\tau k)^{\infty}))|}{q \tilde{f}((k\tau)^{\infty})}\leq \frac{1}{s(k)}<1\notag\\
        &\implies -t'\log|\phi_k'(\pi((\tau k)^{\infty}))|-\frac{1}{s(k)}qf((k\tau)^{\infty})\leq0\notag\\
        &\implies -t's(k)\log|\phi_k'(\pi((\tau k)^{\infty}))|-qf((k\tau)^{\infty})\leq0\notag\\
        &\implies t's(k)\log |\phi_k'(\pi((\tau k)^{\infty}))|+qf((k\tau)^{\infty})\geq0\notag\\
        &\implies \| |\phi_k'| \|^{t's(k)}_{X_{t(k)}}\cdot \|\exp(f_k)\|^{q}_{X_{t(k)}} > 1
    \end{align}
    Now noting that $s(k)t'\geq t$ and that $\| |\phi_k'| \|_{X_{t(k)}} < 1$ for $k$ large enough, we obtain
    \begin{align}
         \| |\phi_k'| \|^t_{X_{t(k)}}\cdot \|\exp(f_k)\|^{q}_{X_{t(k)}}\geq1 \implies (t,q)\not\in D. \qquad \lightning
    \end{align}
    
    The case of $t<0$ is similar. First note that $q<0$ is required in this case. Now, assuming that the limit is approaching from above, the reader should consider $q'<q<0$ and for large enough $k$ the existence of an $s(k)$ such that $\frac{q}{q'} \leq s(k) < 1$.
    \begin{equation}
        -\frac{t\log|\phi_k'(\pi((\tau k)^{\infty}))|}{q' \tilde{f}((k\tau)^{\infty})}\geq s(k)
    \end{equation}
    And, when approaching from below, the reader should consider $0>t'>t$ and the existence of an $s(k)\geq \frac{t}{t'}>1$ such that 
    \begin{equation}
        -\frac{t'\log|\phi_k'(\pi((\tau k)^{\infty}))|}{q \tilde{f}((k\tau)^{\infty})}\geq \frac{1}{s(k)}
    \end{equation}
    Both computations give contradictions, and hence $f_{t,q}$ is not cohomologous to a constant when $t\neq 0$.
    
    Now if $t=0$, $qF$ cannot be bounded as in that case $(0,q)\not\in D$. So $F$ is unbounded in this case. Without loss of generality we may assume that $F$ is strictly positive per Lemma \ref{lem:SpectrumIsTranslationOfFamilyInvariant}. If $(0,q)\in D$ then we must also have $q<0$. So using finite irreducibility of $\Phi$, There exists two points of finite period $w^{\infty}$ and $u^{\infty}$ such that $|w|,|u|\leq|I|+1$. Further as $F$ is unbounded, we can choose $w$ and $u$ such that
    \begin{equation}
        \lim_{n\to\infty}\frac{1}{n}\sum_{j=0}^{n-1}(q\tilde{f})(\sigma^j(w)) = C_w < 0
    \end{equation}
    and 
    \begin{equation}
        q\tilde{f}(u^{\infty}) < C_w\cdot(\vert I\vert+1) \implies \lim_{n\to\infty}\frac{1}{n}\sum_{j=0}^{n-1}(q\tilde{f})(\sigma^j(u^{\infty})) < C_w.
    \end{equation}
    Where the last inequality follows not only from the choice of $u$, but also from each term in the average being negative. One may take $w$ and $u$ to have first and last symbol in $E_{\tau}$ to achieve such a setting. Hence $q\tilde{f}$ is not cohomologous to a constant.\hfill $\lightning$ 
    
    Thus, in all cases $f_{t,q} = t\Xip+q\tilde{f}$ is not cohomologous to a constant and we are done. 
\end{proof}

\begin{remark}
    A truncated form of the above proof also applies to the case where $F$ is bounded as we arrive at $\displaystyle\lim_{k\to\infty} q\tilde{f}((k\tau)^{\infty}) = \infty$, a contradiction $\lightning$.
\end{remark}
With the cohomology established, we recall the following result:
\begin{theorem}  [See \cite{fanKhintchineExponentsLyapunov2009} Theorem 4.2] 
    Let $\Psi$ and $F$ be real-valued H\"older family of functions. If $t\tilde{\psi}+q\tilde{f}$ is not cohomologous to a constant function, then $P(t,q)$ is strictly convex and 
    \[
        H(t,q):= \left[\begin{matrix} \frac{\partial^2 P}{\partial t^2} & \frac{\partial^2 P}{\partial t\partial q} \\[8pt] \frac{\partial^2 P}{\partial t\partial q} & \frac{\partial^2 P}{\partial q^2}  \end{matrix}\right]
    \]
    is positive semi-definite.
\end{theorem}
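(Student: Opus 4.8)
The plan is to route everything through the variational principle and to identify the Hessian of $P$ with the asymptotic covariance matrix of the pair $(\tilde\psi,\tilde f)$ taken with respect to the equilibrium state. Convexity of $P$ itself is essentially free: by the variational principle (Theorem \ref{thm:VP}),
\[
    P(t,q) = \sup_{\mu\in\MM}\left\{h_{\mu}(\sigma)+t\int_{E_A^{\infty}}\tilde\psi\,d\mu+q\int_{E_A^{\infty}}\tilde f\,d\mu\right\},
\]
which displays $P(t,q)$ as a supremum of functions that are affine in $(t,q)$. Any supremum of affine functions is convex, and this already forces the Hessian $H(t,q)$, wherever it exists, to be positive semi-definite. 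So the semi-definiteness assertion is the soft half of the statement.

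To make this quantitative and to upgrade convexity to strict convexity, I would fix $(t,q)\in\text{int}(D)$, where $P$ is real analytic by \cite[Theorems 20.1.11, 20.2.3]{urbanskiVolumeFinerThermodynamic2022}, and compute the second derivative of $P$ along an arbitrary direction $\vec v = (a,b)$. Writing $g_0 = f_{t,q} = t\tilde\psi + q\tilde f$ and $\eta = a\tilde\psi + b\tilde f$, the one-parameter family $s\mapsto P(g_0 + s\eta)$ has first derivative $\int_{E_A^{\infty}}\eta\,d\tmu_{g_0+s\eta}$, exactly as in Lemma \ref{lem:DerivativesOfPressure}. The key input is that its second derivative at $s=0$ equals the asymptotic variance
\[
    \sigma^2_{\tmu_{t,q}}(\eta) := \lim_{n\to\infty}\frac{1}{n}\int_{E_A^{\infty}}\Big(S_n\eta-n\!\int_{E_A^{\infty}}\eta\,d\tmu_{t,q}\Big)^2 d\tmu_{t,q}.
\]
Expanding $\eta = a\tilde\psi + b\tilde f$ and using the bilinearity of the asymptotic variance, this quantity is precisely $\vec v^{\,T} H(t,q)\,\vec v = a^2\frac{\partial^2 P}{\partial t^2}+2ab\frac{\partial^2 P}{\partial t\partial q}+b^2\frac{\partial^2 P}{\partial q^2}$; that is, $H(t,q)$ is the asymptotic covariance matrix of $(\tilde\psi,\tilde f)$. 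Since a variance is non-negative, this recovers that $H$ is positive semi-definite directly.

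For strict convexity I would invoke the classical dichotomy that $\sigma^2_{\mu}(\eta)=0$ if and only if $\eta$ is cohomologous to a constant. Consequently $\vec v^{\,T}H(t,q)\,\vec v$ vanishes for a nonzero $\vec v=(a,b)$ exactly when $a\tilde\psi+b\tilde f$ is cohomologous to a constant, and the hypothesis that $t\tilde\psi+q\tilde f$ is not cohomologous to a constant is what excludes the degenerate direction, so that the variance is strictly positive and $H$ is positive definite; positive definiteness of the Hessian on $\text{int}(D)$ then yields strict convexity. In the regime of the main theorem this non-cohomology is furnished by Lemma \ref{lem:ND-Cohom}.

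I expect the main obstacle to be establishing the second-derivative-equals-variance identity rigorously in the countably infinite alphabet, non-compact setting, rather than the soft convexity bookkeeping. In the finite-alphabet case this is classical transfer-operator perturbation theory, but here one must secure enough integrability — beyond the first-moment condition $\int_{E_A^{\infty}}(|\tilde f|+|\Xip|)\,d\tmu_{t,q}<\infty$ of Lemma \ref{lem:DerivativesOfPressure}, a second-moment control on $\eta$ is needed — to differentiate $P(g_0+s\eta)$ twice under the integral sign and to guarantee that the variance limit exists and is finite. A secondary subtlety is keeping track of which directions are genuinely non-degenerate: one must ensure $\tilde\psi$ and $\tilde f$ are not cohomologically proportional, since otherwise a single zero-variance direction persists and the non-cohomology of $t\tilde\psi+q\tilde f$ alone would not cover it. The cleanest route is to work on $\text{int}(D)$, where analyticity of $P$ together with the spectral gap of the normalized transfer operator $\LL_{f_{t,q}}$ lets one express both derivatives through the leading eigenvalue and its perturbation, after which the cohomology-versus-variance equivalence finishes the proof.
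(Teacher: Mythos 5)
The first thing to note is that the paper never proves this statement: it is imported from Fan et al.\ \cite{fanKhintchineExponentsLyapunov2009} (their Theorem 4.2), and the only commentary the paper adds is the one-line remark that positive semi-definiteness follows from the strict convexity of $P$. So there is no internal proof to compare yours against; your proposal supplies the argument the paper outsources, and the route you choose --- identifying the Hessian with the asymptotic covariance matrix of $(\tilde{\psi},\tilde{f})$ under $\tmu_{t,q}$, then using the dichotomy that the asymptotic variance of a H\"older potential vanishes iff it is cohomologous to a constant --- is the standard one behind the cited result. One caveat on your ``soft half'': in the countable-alphabet setting the variational principle (Theorem \ref{thm:VP}) takes the supremum over the measure class $\{\mu\in\MM : \int_{E_A^{\infty}} f_{t,q}\,d\mu>-\infty\}$, which depends on $(t,q)$, so $P$ is not literally a supremum of a single fixed family of affine functions. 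Either restrict to measures integrating both $|\tilde{\psi}|$ and $|\tilde{f}|$ (and argue the supremum is unchanged), or --- cleaner --- obtain convexity from H\"older's inequality applied to the partition functions $Z_n(t,q)$, which sidesteps the issue entirely.

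The substantive issue is the one you describe as a ``secondary subtlety''; it is in fact the crux, and you are right to insist on it. Non-cohomology of the single combination $t\tilde{\psi}+q\tilde{f}$ yields positive asymptotic variance only along the ray through $(t,q)$. If some other nontrivial combination $a\tilde{\psi}+b\tilde{f}$ is cohomologous to a constant $c$, then $P(f_{t,q}+s(a\tilde{\psi}+b\tilde{f}))=P(f_{t,q})+sc$, so $P$ is affine along the corresponding lines and joint strict convexity fails; taking $\tilde{f}=\tilde{\psi}$ gives a concrete instance in which the hypothesis can hold while the conclusion does not. Hence no proof can derive the theorem's conclusion, read as strict convexity of $P$ in both variables, from the stated hypothesis: one needs either the stronger assumption that no nontrivial linear combination of $\tilde{\psi}$ and $\tilde{f}$ is cohomologous to a constant (which is what your positive-definiteness argument genuinely uses), or the weaker conclusion of strict convexity along the ray through $(t,q)$ together with positive semi-definiteness of $H$. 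This matters for the paper, since what is invoked downstream is $\frac{\partial^2 P}{\partial q^2}>0$ (e.g.\ in Proposition \ref{prop:Existence&Uniqueness}), i.e.\ positive variance in the direction $(0,1)$, which in the variance picture requires non-cohomology of $\tilde{f}$ itself --- again not a formal consequence of the stated hypothesis, so your directional bookkeeping is exactly the right discipline. The remaining burden you flag --- establishing the second-derivative-equals-variance identity with adequate second-moment integrability over the non-compact $E_A^{\infty}$ --- is real, but it is precisely the Gibbs-state perturbation theory that the countable-alphabet literature the paper relies on already provides.
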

\noindent The positive semi-definite property follows from the strict convexity of $P$.

\begin{lemma}[ND]\label{lem:PDBoundaryLimits} If $D$ is open we have the following
\begin{enumerate}
    \item   For any $(t_0,q_0)\in\partial D$ 
    \begin{equation}\label{eqn:BoundaryDPressure}
        \lim_{(t,q)\to(t_0,q_0)}P(t,q) =\infty. 
    \end{equation}
    \item For fixed $t\in\mathbb{R}$ and $F$ strictly positive
    \begin{equation}\label{eqn:BoundaryDPressureDerivative}
        \lim_{(t,q)\to (t,q_0)\in \partial D} \frac{\partial P}{\partial q}(t,q)= \infty.
    \end{equation}
\end{enumerate}
\end{lemma}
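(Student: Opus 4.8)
The plan is to establish (1) through a lower bound of the form $P(t,q)\ge\frac{1}{1+s}\log\bigl(c\,\Zt_1(t,q)\bigr)$ valid near the boundary, and then to obtain (2) as a consequence of (1) together with the convexity of $q\mapsto P(t,q)$. Part (1) uses only finite irreducibility, bounded distortion, and the openness of $D$; the cohomology and convexity facts enter only in part (2).

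For (1), I would first observe that $(t,q)\mapsto\Zt_1(t,q)=\sum_{i\in\NN}\|\exp f_i\|^q_{X_{t(i)}}\||\phi_i'|\|^t_{X_{t(i)}}$ is lower semicontinuous, being a countable sum of continuous nonnegative functions. Because $D$ is open and $(t_0,q_0)\in\partial D$, Proposition~\ref{prop:JustifyManhattanRegion} gives $\Zt_1(t_0,q_0)=\infty$, so lower semicontinuity forces $\Zt_1(t,q)\to\infty$ as $(t,q)\to(t_0,q_0)$. It therefore suffices to bound $P$ below by a fixed power of $\Zt_1$.

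The heart of (1) is this lower bound. Fix a bounded neighborhood $U$ of $(t_0,q_0)$ and set $s=\max_{\tau\in I}|\tau|$. Finite irreducibility lets me write $E=\bigcup_{\tau\in I}E_\tau$, where $E_\tau=\{k\in E:(k\tau)^\infty\in E_A^*\}$, since every letter admits a self-connector in $I$. For fixed $\tau$ and any $k_1,\dots,k_m\in E_\tau$ the word $k_1\tau k_2\tau\cdots k_m\tau$ is $A$-admissible, because $A_{k_j\tau_1}=A_{\tau_{|\tau|}k_j}=1$ follows from $(k_j\tau)^\infty\in E_A^*$. Summing the partition weights over all such words and factorizing each weight through the length-independent bounded distortion constants $K$ and $\text{Dist}(F)$ from Lemma~\ref{lem:BVErgodicSums}, I obtain
\[
    \Zt_{m(1+|\tau|)}(t,q)\ \ge\ c_0\,\bigl(d_\tau(t,q)\,\Zt^{E_\tau}_1(t,q)\bigr)^m,
\]
where $\Zt^{E_\tau}_1$ is the sum restricted to $E_\tau$, $c_0>0$ is a uniform distortion constant, and $d_\tau(t,q)>0$ is the $|\tau|$-fold connector contribution. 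Taking $\tfrac{1}{m(1+|\tau|)}\log(\cdot)$ and sending $m\to\infty$ gives $P(t,q)\ge\tfrac{1}{1+|\tau|}\log\!\bigl(d_\tau(t,q)\Zt^{E_\tau}_1(t,q)\bigr)$. Since $\Zt_1\le\sum_{\tau\in I}\Zt^{E_\tau}_1$ and $|I|<\infty$, some class carries $\Zt^{E_\tau}_1\ge|I|^{-1}\Zt_1$; maximizing over $\tau\in I$ and using that $\min_{\tau}d_\tau$ is bounded below on $U$ yields $P(t,q)\ge\tfrac{1}{1+s}\log\!\bigl(c\,\Zt_1(t,q)\bigr)$ for some $c>0$, which together with the previous paragraph proves (1). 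The main obstacle is exactly this uniform estimate: securing the factorization with a constant independent of both the word length $m$ and of $(t,q)\in U$, and guaranteeing combinatorially that a single connector class $E_\tau$ retains a fixed proportion of the diverging mass of $\Zt_1$.

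For (2), fix $t$ and note that strict positivity of $F$ makes each summand of $\Zt_1(t,\cdot)$ increasing in $q$, so $D\cap(\{t\}\times\RR)=(-\infty,q_0)$ and $q_0$ is approached from below. By Lemma~\ref{lem:ND-Cohom} and the convexity of the pressure (\cite{fanKhintchineExponentsLyapunov2009}), $q\mapsto P(t,q)$ is convex on $(-\infty,q_0)$, so $\partial P/\partial q$ is nondecreasing; and by part (1), $P(t,q)\to\infty$ as $q\uparrow q_0$. A finite-valued convex function that blows up at a finite right endpoint must have derivative tending to $+\infty$: were $\partial P/\partial q\le B$ near $q_0$, integrating would give $P(t,q)\le P(t,q_1)+B(q_0-q_1)<\infty$, a contradiction. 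Hence $\frac{\partial P}{\partial q}(t,q)\to\infty$, which is (2).
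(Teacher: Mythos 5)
Your proof is correct in substance and follows the same overall skeleton as the paper's: both arguments reduce part (1) to showing $\Zt_1(t,q)\to\infty$ at the boundary (your lower-semicontinuity observation is precisely the paper's Fatou-lemma step, Fatou for counting measure being the same thing) combined with a lower bound on $P$ by the logarithm of a first-level partition function, and both deduce part (2) from part (1) via convexity of $q\mapsto P(t,q)$ — the paper uses the secant-slope inequality $\frac{\partial P}{\partial q}(t,q)\geq\frac{P(t,q)-P(t,q')}{q-q'}$, your integration argument is equivalent, and your appeal to Lemma \ref{lem:ND-Cohom} is unnecessary since plain (not strict) convexity suffices. Where you genuinely diverge is the middle step. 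The paper obtains $-\log Q+\log Z_1(f_{t,q})\leq P(t,q)$ by citing bounded supermultiplicativity of $Z_n(f_{t,q})$ and $P(f_{t,q})=\inf_n\frac{1}{n}\log Z_n(f_{t,q})$ from \cite{urbanskiVolumeFinerThermodynamic2022}, then transfers to $\Zt_1$ via \cite[Proposition 19.8.9]{urbanskiVolumeFinerThermodynamic2022} and must control the distortion constant $V_{\beta}(t,q)$ near $(t_0,q_0)$ with a bounding-diamond argument. You instead reprove a supermultiplicativity-type bound from scratch by connector-chaining, which yields the weaker coefficient $\frac{1}{1+s}$ in front of the logarithm but is entirely sufficient for divergence, and it keeps the whole argument at the level of $\Zt_n$, avoiding the amalgamated partition functions and the $V_{\beta}$ uniformity issue altogether. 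The trade: the paper's route is shorter given the cited machinery; yours is self-contained and makes the role of finite irreducibility explicit.

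Two wrinkles in your chaining estimate, both patchable. First, factorizing the weight of $k_1\tau k_2\tau\cdots k_m\tau$ block-by-block produces a distortion constant \emph{per block}, so the correct form is $\Zt_{m(1+|\tau|)}(t,q)\geq c_0^{2m}\bigl(d_\tau(t,q)\,\Zt^{E_\tau}_1(t,q)\bigr)^m$ rather than a single prefactor $c_0$; this is harmless, since after taking $\frac{1}{m(1+|\tau|)}\log(\cdot)$ and letting $m\to\infty$ the constant merely survives inside the logarithm and is bounded below on $U$. Second, and more substantively, the paper's definition of finite irreducibility permits $\varepsilon\in I$, and for the class $E_{\varepsilon}=\{k\in E\;|\;A_{kk}=1\}$ your chaining claim fails: admissibility of $k_jk_{j+1}$ does not follow from $A_{k_jk_j}=A_{k_{j+1}k_{j+1}}=1$, because there is no connector letter to mediate the transition. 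Since the pigeonhole step may select exactly this class, you should first replace $I$ by a witness set of nonempty connectors — for instance, fix $a\in E$ and take $I'=\{\tau_1 a\tau_2\;|\;\tau_1,\tau_2\in I\}$, which is finite and again witnesses finite irreducibility — after which your argument goes through verbatim.
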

\begin{proof}
   Let $D\ni(t,q)\to(t_0,q_0)\in \partial D$ for some given $(t_0,q_0)$. Since $(t,q)\in D$, $P(t,q)<\infty$ and 
   \begin{equation}
        P(t,q) = P(f_{t,q}).
   \end{equation}
   From \cite[Lemma 17.2.5]{urbanskiVolumeFinerThermodynamic2022}, since $f_{t,q}$ is H\"older continuous on cylinders $\{Z_n(f_{t,q})\}_{n\in\NN}$ is submultiplicative and boundedly supermultiplicative, $Z_1(f_{t,q})^n\geq Z_n(f_{t,q})$ and there exists a $0<Q<\infty$ such that $Q^{-n}Z_1(f_{t,q})^n\leq Z_n(f_{t,q})$. Further, by \cite[Theorem 17.2.8]{urbanskiVolumeFinerThermodynamic2022}, as $f_{t,q}$ is acceptable, $P(f_{t,q}) = \displaystyle\inf_{n\in\NN}\frac{1}{n}\log Z_n(f_{t,q})$. Hence,
   \begin{equation}
        -\log Q + \log Z_1(f_{t,q}) \leq P(t,q)\leq \log Z_1(f_{t,q}).
   \end{equation}
   Since the family $t\log\Phi'+qF$ is H\"older for all $(t,q)\in \RR^2$, $v_\beta(t\log\Phi'+qF)<\infty$. By  \cite[Proposition 19.8.9]{urbanskiVolumeFinerThermodynamic2022} with $V_{\beta}(t,q) := \exp\left(\frac{v_{\beta}(t\log\Phi'+qF)}{1-e^{-\beta}}\right)$,
   \begin{equation}
       V_{\beta}(t,q)^{-1}\|\exp(S_w(t\log\Phi'+qF))\|_{X_{t(w)}} \leq \exp(\overline{S}_{|w|}f_{t,q}([w])) \implies \nu_{\beta}(t,q)^{-1}Z_1(t,q)\leq Z_1(f_{t,q}).
   \end{equation}
    Hence,
   \begin{equation}\label{ineq:ZSqueezeP}
        -\log Q - \log V_{\beta}(t,q) + \log Z_1(t,q) \leq P(t,q)\leq \log Z_1(f_{t,q}).
   \end{equation}
    Note that 
    \begin{equation}
        v_\beta(t\log\Phi'+qF)\leq |t|v_\beta(\log\Phi')+|q|v_\beta(F).
    \end{equation} 
    Thus, we may take a bounding diamond about a tail of the path from $(t,q)$ to the $(t_0,q_0)$ in question in the limit and bound $V_\beta(t,q)$ for $(t,q)$ sufficiently close to $(t_0,q_0)$ by using the corner of the diamond with the maximum distance from the origin. Thus, by Proposition \ref{prop:JustifyManhattanRegion}, $\Zt_1(t,q)\asymp Z_1(t,q)$, so it is enough to show that $\Zt_1(t,q)\to\infty$ as $(t,q)\to(t_0,q_0)\in \partial D$. Since $D$ is open, $\partial D\cap D = \emptyset$, and so we have $\Zt_1(t_0,q_0) = \infty$. Suppose now, by way of contradiction, that the limit is finite or does not exist. In either case there is a path along which the limit is finite.  Then, by Fatou's lemma
   \begin{align}
        \infty > M = \lim_{(t,q)\to(t_0,q_0)} \Zt_1(t, q) &\geq \sum_{i=1}^{\infty}\lim_{(t,q)\to(t_0,q_0)}\|\exp(f_i)\|^{q}\||\phi'|\|^t\notag\\
        &=\sum_{i=1}^{\infty}\|\exp(f_i)\|^{q_0}\||\phi'|\|^{t_0} = \infty \quad \lightning 
   \end{align}
    \noindent And so, by this contradiction it must be the case that (\ref{eqn:BoundaryDPressure}) holds. 

    Now for (\ref{eqn:BoundaryDPressureDerivative}), by convexity of $P(t,q)$ for $q>q'$ and by (\ref{eqn:BoundaryDPressure}) we have
    \begin{align}
        \frac{\partial P}{\partial q}(t,q)&\geq \frac{P(t,q)-P(t,q')}{q-q'} \Rightarrow\notag\\  &\lim_{(t,q)\to(t,q_0)\in \partial D}  \frac{\partial P}{\partial q}(t,q) \geq \lim_{(t,q)\to(t,q_0)\in \partial D} \frac{P(t,q)-P(t,q')}{q-q'} = \infty
    \end{align}
    as desired.
    \end{proof}

\subsection{Conformality of Measures}
Under the strong open set condition we can confirm the $t\log \Phi'+qF$-conformality of the measure $m_{t,q}= \tilde{m}_{t,q}\circ\pi^{-1}$ induced by the conformal measures $\tilde{m}_{t,q}$ on $E_A^{\infty}$ from our main theorem. We first recall the definition of a conformal measure for a CGDMS, it is provided for completeness.

\begin{definition}
    Let $\Phi=\{\phi_e\}_{e\in E}$ be a CGDMS and $F = \{f_e\}_{e\in E}$ be a H\"older family of functions. A Borel probability measure $m$ on $X$ is said to be \textbf{$F$-conformal} provided it is supported on the limit set $J$ and 
    \begin{enumerate}
        \item For every $e\in E$ and for every Borel set $B\subseteq \pi(E_A^{\infty})$
        \[
            m(\phi_e(B)) = \int_B \exp(f_e-P(F))dm.
        \]
        \item For all $d\neq e\in E$
        \[
            m(\phi_d(X_{t(d)})\cap\phi_e(X_{t(e)})) = 0.
        \]
    \end{enumerate}
\end{definition}

\noindent The following theorems confirm the conformality of the measure $m_{t,q}$ on $J$.
\begin{theorem}[\cite{urbanskiVolumeFinerThermodynamic2022} Theorem 19.7.2.(c)]
Let $\Phi=\{\phi_e\}_{e\in E}$ be a CGDMS satisfying the SOSC then for any ergodic $\sigma$-invariant Borel probability measure $\mu$ on $E_A^{\infty}$ such  that $\text{supp}(\mu) = E_A^{\infty}$ and for incomparable words $\om,\tau\in E_A^{*}$ we have that: 
\begin{equation}\label{eqn:TheImportantConformal-LikeProperty}
    \mu\circ\pi^{-1}(\phi_{\om}(X_{t(\om)})\cap\phi_{\tau}(X_{t(\tau)}))=0.
\end{equation}
\end{theorem}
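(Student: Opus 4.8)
The plan is to reduce the statement, via the open set condition, to a claim about codings of non-uniquely coded points, and then to settle that claim with a zero--one law driven by the SOSC. First I would record the elementary fact that sets with disjoint interiors satisfy $A\cap B\subseteq\partial A\cup\partial B$. Applying this to $A=\phi_{\om}(X_{t(\om)})$ and $B=\phi_{\tau}(X_{t(\tau)})$ --- after cancelling the maximal common prefix of $\om$ and $\tau$ using injectivity of the associated map, so that the two words begin with distinct letters --- the open set condition gives $\phi_{\om}(\text{Int}(X_{t(\om)}))\cap\phi_{\tau}(\text{Int}(X_{t(\tau)}))=\emptyset$, whence
\[
\phi_{\om}(X_{t(\om)})\cap\phi_{\tau}(X_{t(\tau)})\subseteq\bigcup_{e\in E}\phi_e(\partial X_{t(e)}).
\]
Since every point carrying two distinct codings lies in such an overlap for the length-$n$ prefixes at which its codings first disagree, and since $E_A^{*}$ is countable, it suffices to prove the stronger statement that $N:=\pi^{-1}(J_{>1})$, the set of codings of points with non-unique codings, is $\mu$-null; the assertion for the fixed incomparable pair $(\om,\tau)$ then follows from $\pi^{-1}(\phi_{\om}(X_{t(\om)})\cap\phi_{\tau}(X_{t(\tau)}))\subseteq N$.

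Next I would show that $N$ is shift-invariant modulo $\mu$. If $\sigma\rho\in N$, then $\pi(\sigma\rho)$ admits a coding $\eta\neq\sigma\rho$; because $\pi(\eta)=\pi(\sigma\rho)\in X_{t(\rho_1)}$ forces $i(\eta_1)=t(\rho_1)$, the concatenation $\rho_1\eta$ is admissible and codes $\pi(\rho)$, yielding a coding of $\pi(\rho)$ distinct from $\rho$, so $\rho\in N$. Thus $\sigma^{-1}N\subseteq N$, and $\sigma$-invariance of $\mu$ gives $\mu(N\setminus\sigma^{-1}N)=0$, so $N$ is invariant mod $\mu$. Ergodicity then pins $\mu(N)\in\{0,1\}$.

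It remains to exclude $\mu(N)=1$, and this is where the SOSC and full support enter. The SOSC furnishes $z\in J\cap\text{Int}(X_v)$; fixing $\delta>0$ with $B(z,2\delta)\subseteq\text{Int}(X_v)$, a coding $\eta$ of $z$, and (by the uniform contraction together with bounded distortion) an $n$ for which $\phi_{\eta\vert_n}(X_{t(\eta_n)})\subseteq B(z,\delta)$, full support gives $\mu([\eta\vert_n])>0$, and every $\rho\in[\eta\vert_n]$ has $\pi(\rho)\in\text{Int}(X_v)$, so this cylinder avoids the level-zero boundary set. The main obstacle is precisely that interiority of $\pi(\rho)$ in $X_v$ does not preclude $\pi(\rho)$ lying on the boundary of some \emph{other} cylinder image encountered further along the orbit, so controlling only the first coordinate is insufficient. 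To close this gap I would propagate the interiority condition along the shift: using that the images $\phi_w(X_{t(w)})$ shrink uniformly with bounded distortion, I would pull the ball $B(z,\delta)$ back through admissible words to manufacture a positive-measure set of codings whose entire forward orbit stays a definite distance inside the interiors (a recurrence argument, since $\mu$-a.e. orbit returns to $[\eta\vert_n]$ infinitely often), and hence consists of uniquely coded points. Such a set contradicts $\mu(N)=1$, forcing $\mu(N)=0$ and completing the proof.
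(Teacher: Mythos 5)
This statement is quoted by the paper from Urba\'nski--Roy--Munday (Theorem 19.7.2(c)) without proof, so your attempt has to be measured against the standard argument; it contains a genuine and fatal gap. The inclusion on which your whole architecture rests, $\pi^{-1}\left(\phi_{\omega}(X_{t(\omega)})\cap\phi_{\tau}(X_{t(\tau)})\right)\subseteq N:=\pi^{-1}(J_{>1})$, is false. Membership of $\pi(\rho)$ in $\phi_{\omega}(X_{t(\omega)})$ does not produce a coding of $\pi(\rho)$ beginning with $\omega$: the preimage $\phi_{\omega}^{-1}(\pi(\rho))$ need not belong to $J$ at all, and even when it does, concatenating $\omega$ with a coding of it need not be $A$-admissible, since in a \emph{Markov} system the matrix $A$ may forbid transitions that the graph permits. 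The same oversight invalidates your invariance step, where you infer admissibility of $\rho_1\eta$ from $i(\eta_1)=t(\rho_1)$: that is composability, not $A$-admissibility. Concretely, take $X=[0,1]$, $\phi_1(x)=x/2$, $\phi_2(x)=(x+1)/2$, with $A_{11}=A_{12}=A_{21}=1$ and $A_{22}=0$. This is a finitely irreducible CGDMS satisfying the SOSC; the overlap $\phi_1(X)\cap\phi_2(X)=\{1/2\}$ is nonempty, and $\pi^{-1}(\{1/2\})=\{21^{\infty}\}$ because the other candidate coding $12^{\infty}$ contains the forbidden word $22$. In fact every point of $J$ is uniquely coded here, so $N=\emptyset$: your program would establish $\mu(N)=0$ and conclude nothing whatsoever about the overlap. (The theorem is still true in this example, but for a different reason: $21^{\infty}$ is not periodic, so no ergodic invariant measure can charge it.)

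The correct proof, which your final paragraph half rediscovers, replaces $N$ by $Z:=\{\rho\in E_A^{\infty}\;|\;\pi(\rho)\in\bigcup_{v\in V}\partial X_v\}$ and establishes two facts. First, $\mu\left(\bigcup_{n\geq 0}\sigma^{-n}Z\right)=0$: each $\phi_w$ is an open map with $\phi_w(X_{t(w)})\subseteq X_{i(w)}$, so interiority propagates \emph{backwards} along orbits, i.e. $\pi(\sigma^m\rho)\in\text{Int}(X)$ forces $\pi(\sigma^j\rho)\in\text{Int}(X)$ for all $j\leq m$; your SOSC cylinder $[\eta\vert_n]$ with $\pi([\eta\vert_n])\subseteq\text{Int}(X_v)$ has positive measure by full support, Birkhoff's theorem puts $\mu$-a.e.\ orbit inside it at arbitrarily late times, and pulling interiority back yields that a.e.\ orbit never meets $\partial X$ --- no bounded distortion, no recurrence gymnastics, no ``definite distance'' are needed. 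Second --- and this is the step your proposal is missing entirely --- the overlap reduces to $Z$, not to $N$: if $\pi(\rho)\in\phi_{\omega}(X_{t(\omega)})\cap\phi_{\tau}(X_{t(\tau)})$ with $\omega,\tau$ incomparable, at least one of them, say $\omega$, is not a prefix of $\rho$; letting $k=|\rho\wedge\omega|$, injectivity of $\phi_{\rho\vert_k}$ gives $\pi(\sigma^k\rho)\in\phi_{\omega_{k+1}}(X)\cap\phi_{\rho_{k+1}}(X)$ with $\omega_{k+1}\neq\rho_{k+1}$. If $\pi(\sigma^{k+1}\rho)$ were interior, then $\pi(\sigma^k\rho)$ would lie in the open set $\phi_{\rho_{k+1}}(\text{Int}X)$, which by the open set condition is disjoint from $\phi_{\omega_{k+1}}(\text{Int}X)$ and hence, by regular closedness ($\phi_{\omega_{k+1}}(X)\subseteq\overline{\phi_{\omega_{k+1}}(\text{Int}X)}$), disjoint from $\phi_{\omega_{k+1}}(X)$ --- a contradiction. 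Thus $\rho\in\sigma^{-(k+1)}Z$, the preimage of the overlap lies in $\bigcup_{n\geq1}\sigma^{-n}Z$, and the theorem follows. Note that uniqueness of codings never enters the argument; that is precisely why routing the proof through $J_{>1}$ cannot work.
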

\begin{theorem}[\cite{urbanskiVolumeFinerThermodynamic2022} Theorem 19.8.14] Let $\Phi$ be a finitely irreducible CGDMS for which (\ref{eqn:TheImportantConformal-LikeProperty}) holds for any ergodic $\sigma$-invariant Borel probability measure $\mu$ on $E_A^{\infty}$ such that $\text{supp}(\mu) = E_A^{\infty}$. Then for any summable H\"older family of functions $F = \{f_e\}_{e\in E}$, the CGDMS $\Phi$ has a unique $F$-conformal measure $m_F$. Moreover
\[
    m_{F} = m_{f}\circ\pi^{-1},
\]
where $f$ is the amalgamated function induced by $F$ and $m_{f}$ is the eigenmeasure of the dual transfer operator $\LL^*_f$.
\end{theorem}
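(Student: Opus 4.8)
The plan is to build the candidate measure symbolically, where the transfer operator is available, and then transport it to $X$ through the coding map $\pi$, checking the two conformality axioms and uniqueness by reducing each to a statement about the eigenmeasure of $\LL^*_f$. Since $F$ is a summable H\"older family, its amalgamated function $f = \tilde{f}$ is summable and H\"older continuous on cylinders, so the machinery behind Theorem \ref{thm:Gibbs&EquilibriumStates} applies: the dual operator $\LL^*_f$ possesses a unique eigenmeasure $m_f$ with $\LL^*_f m_f = e^{P(f)} m_f$, and the associated $\sigma$-invariant Gibbs state $\mu_f$ is ergodic, equivalent to $m_f$, and fully supported (the Gibbs inequality \eqref{ineq:GSDef} forces every cylinder to have positive mass). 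Recalling that $P(f) = P(F)$ for a summable H\"older family, I set $m_F := m_f \circ \pi^{-1}$, a Borel probability measure supported on $J = \pi(E_A^{\infty})$.

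First I would verify conformality axiom (1). The key mechanism is the branch form of the eigenequation: testing $\int \LL_f g\, dm_f = e^{P(f)} \int g\, dm_f$ against $g = \mathbf{1}_{eC}$ for a Borel set $C \subseteq E_A^{\infty,e}$ collapses the transfer-operator sum to the single surviving branch $e$, yielding
\[
    m_f(eC) = \int_C \exp\!\big(\tilde{f}(e\om) - P(f)\big)\, dm_f(\om).
\]
By the definition of the amalgamated function, $\tilde{f}(e\om) = f_e(\pi(\om))$, and since $\phi_{e\om|_n} = \phi_e \circ \phi_{\om|_n}$ one has $\pi(e\om) = \phi_e(\pi(\om))$, hence $\pi(eC) = \phi_e(\pi(C))$. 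Pushing forward through $\pi$ and using $P(f) = P(F)$ turns the display into $m_F(\phi_e(B)) = \int_B \exp(f_e - P(F))\, dm_F$ for $B = \pi(C)$; a monotone-class and outer-regularity argument then extends the identity from cylinder images to all Borel $B \subseteq \pi(E_A^{\infty})$.

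Axiom (2), the vanishing of overlaps $m_F(\phi_d(X_{t(d)}) \cap \phi_e(X_{t(e)})) = 0$ for $d \neq e$, is exactly where the hypothesis enters. Because $\mu_f$ is ergodic, $\sigma$-invariant, and fully supported, the standing assumption (\ref{eqn:TheImportantConformal-LikeProperty}) applies to it and gives $\mu_f \circ \pi^{-1}$ of every such overlap equal to $0$. As $\mu_f$ and $m_f$ are equivalent, the same sets are $m_f$-null, hence $m_F$-null; in particular $m_F(J_{>1}) = 0$, so $m_F$ is the faithful image of $m_f$ off the overlaps. For uniqueness I would reverse the correspondence: any $F$-conformal measure $m'$ satisfies axiom (2), so it charges no overlaps and lifts unambiguously to $E_A^{\infty}$, while axiom (1) forces that lift to obey the eigenequation for $\LL^*_f$; uniqueness of the eigenmeasure for summable H\"older potentials on finitely irreducible shifts (the same input as Theorem \ref{thm:Gibbs&EquilibriumStates}) then identifies the lift with $m_f$, so $m' = m_F$.

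The main obstacle is the bookkeeping across the non-injective coding map. The conformality identity lives naturally on cylinders of $E_A^{\infty}$, whereas axiom (1) demands it on arbitrary Borel subsets of $J$, and the two descriptions disagree precisely on the overlap set $J_{>1}$. Everything therefore hinges on the single measure-theoretic fact $m_f(\pi^{-1}(J_{>1})) = 0$, which is the content extracted from (\ref{eqn:TheImportantConformal-LikeProperty}) via the equivalence $m_f \sim \mu_f$. Once the overlaps are shown to be null, $\pi$ becomes a measure isomorphism onto its image, and both axioms together with uniqueness descend directly from the symbolic eigenmeasure theory.
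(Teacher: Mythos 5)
The paper does not actually prove this statement: it is imported verbatim, with citation, as Theorem 19.8.14 of \cite{urbanskiVolumeFinerThermodynamic2022}, so there is no internal argument to compare against. Your reconstruction follows the same route as the cited source: push the eigenmeasure $m_f$ of $\LL^*_f$ forward through $\pi$, derive conformality axiom (1) from the branch form of the eigenequation $\LL^*_f m_f = e^{P(f)}m_f$, obtain axiom (2) and the nullity of $\pi^{-1}(J_{>1})$ by applying hypothesis (\ref{eqn:TheImportantConformal-LikeProperty}) to the fully supported, ergodic, shift-invariant Gibbs state $\mu_f$ equivalent to $m_f$, and get uniqueness by lifting any $F$-conformal measure back to an eigenmeasure of $\LL^*_f$. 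The one step you state too quickly is uniqueness: axiom (2) only gives vanishing overlaps for pairs of distinct single letters, whereas making the lift of an arbitrary $F$-conformal measure $m'$ well defined (i.e.\ Kolmogorov-consistent on cylinders) requires overlap-nullity for \emph{all} pairs of incomparable finite words, which must first be deduced by iterating axiom (1) inductively along word length; this extension is standard but it is a genuine step, not a restatement of axiom (2).
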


\section{Tightness and Zero-Temperature Style Limits}
There are a few issues that arise in the argument of the Lyapunov spectrum of the Gauss map in \cite[Proposition 6.1]{fanKhintchineExponentsLyapunov2009}. The errors concern the compactness of $\mathcal{M}$ and the upper semi-continuity of metric entropy, properties that are well known in the case of finite alphabet shifts of finite type (SFTs) and other expansive continuous self-maps of non-empty compact metrizable spaces. \cite[Chapter 5, Theorem 12.2.5]{urbanskiVolumeErgodicTheory2021} However, these properties require more care in the case of the continued fraction Gauss map and other systems that may be represented by countably infinite alphabet shifts. We will work under the assumptions that we have a finitely irreducible symbolic space and $F$ is bounded below. Note that the comparability condition between the Lyapunov and $F$ families from Lemma \ref{lem:Dopen} implies that $F$ is bounded below. In light of Lemma \ref{lem:SpectrumIsTranslationOfFamilyInvariant}, without loss of generality we will work as if $F$ is strictly positive. What we argue here will repair the non-fatal errors made in the derivation of the Lyapunov spectrum. (One may also adapt and consult \cite{jenkinsonZeroTemperatureLimits2005} and \cite{freireEquilibriumStatesZero2018} to repair these issues in the context of the Gauss map directly.) These arguments are also pertinent to our derivation of the $F$-exponent spectrum. Since our family $F$ is strictly positive, and since $\frac{\partial P}{\partial q}(t,\cdot) = \int_{E_A^{\infty}} \tilde{f}\; d\tmu_{t,\cdot}$ is continuous and strictly increasing, we know that 
\begin{equation}
    A = \left\{\frac{\partial P}{\partial q}(t,\cdot)\;\Big\vert\; (t,q)\in D\right\}
\end{equation}
is the continuous image of an interval. So, $A$ is an interval with 
\begin{equation}
    \inf(A) = \inf_{q}\left\{\int_{E_A^{\infty}}\tilde{f}\;d\tmu_{t,q}\right\} =: \xi_{\min}(t).
\end{equation}
The goal of this section is to show that $\xi_{\min}(t)$ is independent of $t$, that there exists $\mu\in\MM$ for which this $\xi_{\min}:= \xi_{\min}(t)$ is achieved, and that it results from a zero-temperature style limit of Gibbs States. For background on zero-temperature limits see \cite{jenkinsonZeroTemperatureLimits2005}.
\subsection{Tightness and Prohorov's Theorem}
We begin with one of our main results, the tightness result for the relevant Gibbs states.
\begin{customprop}{\ref{MainResult2}} [c.f. \cite{jenkinsonZeroTemperatureLimits2005} Lemma 2]Fix $0\leq t\leq h$ and let $q\in (-\infty,q_1)\subsetneq(-\infty,-1)$ such that $(0,q_1)\in D$. Then the families of Gibbs states $\{\{\tmu_{t,q}\}_{q\leq q_1}\}_{0\leq t\leq h}$ are tight, i.e. for all $\varepsilon>0$ there exists $K\subseteq E_A^{\infty}$ compact such that $\tmu_{t,q}(K)>1-\varepsilon$ for all $q<q_1$.
\end{customprop}
\begin{proof}
    Note that since $(0,q_1)\in D$ and since $P(t,q)$ is decreasing in $t$, we have that $(t,q_1)\in D$ for all $0\leq t\leq h$. Further, $(t,q)\in D$ for all $q\leq q_1$ since $P(t,q)$ is increasing in $q$. Next we fix a $t\in[0, h]$. Note that since $f_{t,q}$ is H\"older continuous on cylinders, it is also of summable variations by Lemma \ref{lem:BVErgodicSums}. Now let $\varepsilon>0$ and note that the set
    \begin{equation}\label{eqn:TheCompactK}
        K = \{x\in E_A^{\infty}\;|\; 1\leq x_k\leq a_k,\; \forall k\in\NN\}
    \end{equation}
    is compact for any $\{a_k\}_{k\in\NN}\in\NN^\infty$ by Tychonoff's Theorem. Setting $K$ aside for a moment we begin our analysis with the Gibbs property which we recall here in line with \cite[Theorem 2.2.7]{mauldinGraphDirectedMarkov2003} and \cite{freireEquilibriumStatesZero2018}. 
    \begin{align}\label{ineq:tqGibbs1}
        e^{-4V(f_{t,q})}\leq &\frac{\tmu_{t,q}([w])}{\exp(S_nf_{t,q}-|w|P(t,q))}\leq e^{4V(f_{t,q})}\nonumber\\
        &\Rightarrow \tmu_{t,q}([a]) \leq e^{4V(f_{t,q})}\exp(\sup\{f_{t,q}\vert_{[a]}\} -P(t,q)) \hspace{.25cm} \forall a\in E=\NN.
    \end{align}
    
    Now let $m\in\MM$ such that $I=\int_{E_A^{\infty}}f_{t,q_1}dm<\infty$. Such an $m$ exists since $E_A^{\infty}$ is finitely irreducible, and so, there exists $\tau\in E_A^*$ such that $\tau^{\infty}\in E_A^{\infty}$ and we may take $m\in \MM$ supported on $\OO(\tau^{\infty})$, the orbit of 
    $\tau^{\infty}$. By the variational principle (Theorem \ref{thm:VP}) and since $0>q_1>q$, $\frac{q}{q_1}>1$,
    \begin{align}\label{eqn:P1}
        P(t,q)-\frac{q}{q_1}\int_{E_A^{\infty}} f_{t,q_1}\;dm &= P\left(f_{t,q}-\frac{q}{q_1}\int_{E_A^{\infty}} f_{t,q_1}\;dm \right) \nonumber \\
        &\geq \int_{E_A^{\infty}}\left(f_{t,q}-\frac{q}{q_1}\int_{E_A^{\infty}} f_{t,q_1}\;dm\right)dm+h(m)\nonumber\\
        & = t\left(1-\frac{q}{q_1}\right)\int_{E_A^{\infty}}\Xip\;dm +h(m)\geq 0.
    \end{align}
    Hence applying (\ref{eqn:P1}) to the Gibbs property inequality of interest (\ref{ineq:tqGibbs1}) grants
    \begin{align}\label{ineq:ME1}
        \tmu_{t,q}([a])&\leq e^{4V(f_{t,q})}\exp\left(\sup\left\{\left(f_{t,q}-\frac{q}{q_1}I\right)\Bigg\vert_{[a]}\right\}\right) e^{-P(f_{t,q}-\frac{q}{q_1}I)}\nonumber\\
        &\leq e^{4V(f_{t,q})}\exp\left(\sup\left\{\left(f_{t,q}-\frac{q}{q_1}I\right)\Bigg\vert_{[a]}\right\}\right)\nonumber\\
        &= \exp\left(4V(f_{t,q})-\frac{q}{q_1}\int_{E_A^{\infty}} f_{t,q_1}\;dm+\sup\{f_{t,q}\vert_{[a]}\}\right)\nonumber\\
        &=\exp\left(\frac{q}{q_1}\left(4V\left(f_{\frac{q_1}{q}t,q_1}\right)-I+\sup\left\{f_{\frac{q_1}{q}t,q_1}\vert_{[a]}\right\}\right)\right)
    \end{align}
    since $kf_{t,q} = f_{kt,kq}$ for any $k\in \RR$. Note that $(q_1<-1)\Rightarrow \left(\frac{q_1}{q}<1<|q_1|\right)$. So by Definition (\ref{def:Variation}) and the triangle inequality, we have that 
    \begin{equation}\label{ineq:variation1}
        4V\left(f_{\frac{q_1}{q}t,q_1}\right)\leq \frac{4q_1}{q}tV(\Xip)+4|q_1|V(\tilde{f}) \leq 4|q_1||t|V(\Xip)+4|q_1|V(\tilde{f}).
    \end{equation}
    Combining (\ref{ineq:ME1}) and (\ref{ineq:variation1}) gives
    \begin{align}\label{ineq:ME2}
        \tmu_{t,q}([a])&\leq \exp\left(\frac{q}{q_1}\left(4|q_1|tV(\Xip)+4|q_1|V(\tilde{f})-I+\sup\left\{f_{\frac{q_1}{q}t,q_1}\vert_{[a]}\right\}\right)\right)\nonumber\\
        &\leq \exp\left(\frac{q}{q_1}\left(4|q_1|tV(\Xip)+4|q_1|V(\tilde{f})-I+\sup\left\{f_{0,q_1}\vert_{[a]}\right\}\right)\right).
    \end{align}
    Further, $f_{0,q_1}$ is summable, so by Definition \ref{def:summable} and since $q<q_1$, it must be the case that $\sup\{f_{t,q}\vert_{[a]}\}\leq \sup\{f_{0,q_1}\vert_{[a]}\} \to -\infty$ as $a\to\infty$. So, there exists an $N\in \NN$ independent of $q$ such that for all $a\geq N$ we have that 
    \begin{equation}
        \frac{q}{q_1}\left(4|q_1|tV(\Xip)+4|q_1|V(\tilde{f})-I+\sup\left\{f_{0,q_1}\vert_{[a]}\right\}\right)<0.
    \end{equation}
    Continuing with (\ref{ineq:ME2}) by taking $a\geq N$ and again noting that $\frac{q}{q_1}\geq1$,
    \begin{align}\label{ineq:ME3}
        \tmu_{t,q}([a])\leq \exp\left(4|q_1|tV(\Xip)+4|q_1|V(\tilde{f})-I+\sup\left\{f_{0,q_1}\vert_{[a]}\right\}\right),
    \end{align}
    and, as $(0,q_1)\in D$, we may find for each $k\in\NN$ an $a_k\geq N$ such that 
    \begin{align}\label{ineq:Sum1}
        \sum_{i = a_k+1}^{\infty} e^{\sup\{f_{0,q_1}\vert_{[i]}\}}<\frac{\varepsilon}{2^k}e^{\left(-4|q_1||t|V(\Xip)-4|q_1|V(\tilde{f})+I\right)}.
    \end{align}
    We now consider the compact $K$ from \eqref{eqn:TheCompactK} defined by the $a_k$'s. Define $\pi_k:E_A^{\infty}\to E$ by $\pi_k(\om) :=\om_k$. Then, 
    \begin{equation}
        \pi_k^{-1}(i):=\{\om\in E_A^{\infty}\;|\; \om_k = i\}
    \end{equation}
    And so,
    \begin{align} \label{ineq:ME4}
        \tmu_{t,q}(K) &= \tmu_{t,q}\left(E_A^{\infty}\setminus \bigcup_{k=1}^{\infty}\{\om\in E_A^{\infty}\;|\; \om_k > a_k\}\right) \nonumber \\
        &\geq 1-\sum_{k=1}^{\infty} \tmu_{t,q}(\{\om\in E_A^{\infty}\;|\; \om_k>a_k\}) \nonumber\\
        & = 1-\sum_{k=1}^{\infty}\sum_{i = a_k+1}^{\infty}\tmu_{t,q}(\pi_k^{-1}(i)) \nonumber \\
         & = 1-\sum_{k=1}^{\infty}\sum_{i = a_k+1}^{\infty}\tmu_{t,q}([i]).
    \end{align}
    Combining (\ref{ineq:ME4}), (\ref{ineq:Sum1}), and (\ref{ineq:ME3}), we obtain integers $a_k$ such that 
    \begin{equation}
        \sum_{i=a_k+1}^{\infty} \tmu_{t,q}([i])< \frac{\varepsilon}{2^k} \text{ for all } k\in \NN, q< q_1.
    \end{equation}
    Thus, $\tmu_{t,q}(K)>1-\epsilon$ and so the collection of Gibbs states $\{\tmu_{t,q}\}_{\substack{q\leq q_1\\0\leq t\leq h}}$ is tight as desired.
\end{proof}

The previous proposition together with the following theorem tells us that if $q_n\to -\infty$ as $n\to\infty$, then the sequence of Gibbs states $\{\mu_{t,q_n}\}_{n\in\NN}$ has an accumulation point $\mu_{\infty}$.

\begin{theorem}[Prohorov's Theorem, see \cite{billingsleyWeakConvergenceMetric1968} Theorem 6.1] If a family of probability measures $\Pi$ is tight, then it is relatively compact. 
\end{theorem}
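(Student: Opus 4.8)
The ambient setting relevant here is a Polish space (the shift space $E_A^{\infty}$ is a closed subspace of $\NN^{\NN}$, hence separable and completely metrizable), so I would prove the statement in that generality. The plan is first to reduce relative compactness to sequential relative compactness; this is legitimate because in Billingsley's framework relative compactness is understood sequentially, and equivalently because on a separable metric space the weak topology on Borel probability measures is itself metrizable (by, e.g., the L\'evy--Prohorov metric). It therefore suffices to show that an arbitrary sequence $\{\mu_n\}\subseteq\Pi$ admits a subsequence converging weakly to some Borel probability measure $\mu$, which need not lie in $\Pi$.

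To manufacture the limit I would pass to a compact ambient space. Since $S$ is separable metrizable it embeds homeomorphically into the Hilbert cube $[0,1]^{\NN}$, a compact metric space $\hat S$; push each $\mu_n$ forward to a Borel probability measure on $\hat S$. The set of probability measures on the compact metric space $\hat S$ is a weak-* closed subset (positivity and $L(\mathbf 1)=1$ are both weak-* closed conditions) of the unit ball of $C(\hat S)^{*}$, and $C(\hat S)$ is separable, so by Banach--Alaoglu this set is weak-* sequentially compact. Extract a subsequence $\mu_{n_j}$ converging weak-* to a probability measure $\nu$ on $\hat S$; this is exactly weak convergence of probability measures on the compact space $\hat S$.

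Now I would invoke tightness. For each $m$ choose a compact $K_m\subseteq S$ with $\inf_{\mu\in\Pi}\mu(K_m)>1-1/m$; each $K_m$ is also compact, hence closed, in $\hat S$. The closed-set half of the Portmanteau theorem gives
\[
    \nu(K_m)\;\ge\;\limsup_{j\to\infty}\mu_{n_j}(K_m)\;\ge\;1-\tfrac1m,
\]
so $\nu\bigl(\bigcup_m K_m\bigr)=1$; in particular $\nu$ is concentrated on $S$, and its restriction $\mu$ is a genuine Borel probability measure on $S$. The final step is to upgrade weak-* convergence on $\hat S$ to weak convergence on $S$: given $g\in C_b(S)$, one extends a truncation of $g$ to a continuous function on $\hat S$ and uses the estimate above to bound the contribution of $\int g\,d\mu_n$ outside $K_m$ uniformly in $n$, then lets $m\to\infty$ to conclude $\int_S g\,d\mu_{n_j}\to\int_S g\,d\mu$.

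The main obstacle is precisely the interplay of the two topologies in the last two steps. Banach--Alaoglu on the compactification a priori only produces a limit of total mass at most $1$, and without tightness mass can escape to $\hat S\setminus S$, leaving $\nu(S)<1$ and destroying both the probability-measure property of the limit and the equivalence between convergence against $C(\hat S)$ and against the larger class $C_b(S)$. Tightness is exactly the hypothesis that forbids this escape and simultaneously licenses the transfer of convergence. Verifying these two points carefully---that no mass is lost and that bounded continuous test functions on $S$ are adequately approximated after truncation---is where the genuine content lies; the remainder is soft functional analysis together with the standard Portmanteau equivalences.
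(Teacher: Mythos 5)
You have produced an argument for a statement the paper itself never proves: Prohorov's theorem is imported verbatim from Billingsley and used as a black box, its only role being to convert the tightness established in Proposition \ref{MainResult2} into the existence of the weak* accumulation points needed in Proposition \ref{MainResult3}. So there is no in-paper proof to compare against; judged on its own, your sketch is correct and is the standard compactification route. The reduction to sequential compactness is legitimate in Billingsley's framework (where relative compactness is defined sequentially), the Urysohn embedding of the Polish space into the Hilbert cube together with Banach--Alaoglu, Riesz representation, and separability of $C(\hat S)$ correctly yields a weak* convergent subsequence, the closed-set half of Portmanteau applied to the compact sets $K_m$ correctly pins all the mass of the limit $\nu$ inside the image of $S$, and the Tietze-extension/three-epsilon argument over the $K_m$ correctly upgrades convergence against $C(\hat S)$ to convergence against $C_b(S)$. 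You also correctly identify the two places where tightness is irreplaceable. This differs from the proof in the cited source: Billingsley constructs the limit measure directly on $S$, diagonalizing over a countable class of finite unions of balls centered in a countable dense subset of $\bigcup_m K_m$ and building a measure from the resulting limit set function by a Carath\'eodory-type outer-measure extension, with no compactification at all. Your route buys conceptual economy (soft functional analysis plus Portmanteau) at the price of some measurability bookkeeping you pass over quickly --- restricting $\nu$ from $\hat S$ back to a Borel measure on $S$ requires either that the image of $S$ be Borel in $\hat S$ (true here, since $S$ is Polish, so its image is a $G_\delta$) or the observation that $\nu\bigl(\bigcup_m K_m\bigr)=1$ makes the restriction well defined regardless; Billingsley's construction is longer but self-contained and never leaves the space. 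Either proof is adequate for the way the theorem is used in this paper.
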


\subsection{Minimizing \textit{F}-exponents}
\noindent Now we show that $\xi_{\min}$ is achieved by $\mu_{\infty}$ independent of the $t$ used, at least on the multifractal region of $\Phi$ with respect to $F$ where such an analysis matters for our purposes. We must also work with the comparability assumption of Lemma \ref{lem:Dopen}.
\begin{customprop}{\ref{MainResult3}}
      Given the comparability of $F$ and $\log\Phi'$ from Lemma \ref{lem:Dopen}. For any $0\leq t\leq h$ fixed and any strictly decreasing sequence $\{q_n\}_{n\in\NN}$ for which $q_n\to -\infty$ as $n\to\infty$, there is a weak* accumulation point $\mu_{\infty}$ of the sequence of Gibbs states $\{\tmu_{t,q_n}\}_{n\in\NN}$, and this measure is $\tilde{f}$-minimizing, i.e. $\int_{E_A^{\infty}}\tilde{f}d\mu_{\infty} = \xi_{\min}$.
\end{customprop}
\begin{proof} Our proof is similar to \cite[Theorem 1]{jenkinsonZeroTemperatureLimits2005}, in fact if $t=0$ then the argument is nearly identical. However, the case of $t>0$ requires more care. We wish to analyze $h(\tmu_{t,q})$ for $(t,q)\in D_0$, $t$ fixed. To approach it, we start by setting $p(q):= P(t,q)$. Then $p(q)$ is real analytic for $q<q_0(t)$ where $(t,q_0(t))\in\partial D$,
\[
    p'(q) = \int_{E_A^{\infty}} \tilde{f}\;d\tmu_{t,q},
\]
and $p$ is convex. So $q\mapsto p'(q)$ is non-decreasing and bounded below by $\inf \tilde{f} \geq K\geq 0$ so 
\[
    \lim_{q\to -\infty } p'(q) = \lim_{q\to-\infty} \int_{E_A^{\infty}}\tilde{f}\; d\tmu_{t,q}\geq K > -\infty.
\]
Thus, the limit exists and is finite. Now since $\tmu_{t,q}$ is an equilibrium state
\begin{align}\label{ineq:VPineq1}
    &\int_{E_A^{\infty}} f_{t,q}d\tmu_{t,q} +h(\tmu_{t,q}) \geq \int_{E_A^{\infty}} f_{t,q}d\mu_{\infty} +h(\mu_{\infty}) \nonumber \\
    &\Rightarrow t\int_{E_A^{\infty}} \Xip d\tmu_{t,q} + q\int_{E_A^{\infty}} \tilde{f}d\tmu_{t,q} + h(\tmu_{t,q}) \geq t\int_{E_A^{\infty}} \Xip d\mu_{\infty} + q\int_{E_A^{\infty}} \tilde{f}d\mu_{\infty}  +h(\mu_{\infty}).
\end{align}
Now since $(t,q)\in D_0$, we have that $t\geq 0$ and $\int_{E_A^{\infty}}\Xip d\tmu_{t,q}<0$. So, from these observations and (\ref{ineq:VPineq1}), we yield for $q$ large and negative
\begin{align}\label{ineq:VPineq2}
    &q\int_{E_A^{\infty}} \tilde{f}d\tmu_{t,q} + h(\tmu_{t,q}) \geq t\int_{E_A^{\infty}} \Xip d\mu_{\infty} + q\int_{E_A^{\infty}} \tilde{f}d\mu_{\infty}  +h(\mu_{\infty})\nonumber\\
    &\Rightarrow \int_{E_A^{\infty}} \tilde{f}d\tmu_{t,q} + \frac{h(\tmu_{t,q})}{q} \leq \frac{t}{q}\int_{E_A^{\infty}} \Xip d\mu_{\infty} + \int_{E_A^{\infty}} \tilde{f}d\mu_{\infty}  +\frac{h(\mu_{\infty})}{q}.
\end{align}
By the comparability of $F$ and $\log\Phi'$, we have that 
\begin{align} \label{ineq:FPhiIntcomp}
    \frac{f_i-\gamma}{-\alpha}\leq \log|\phi_i'| \implies -t\int_{E_A^{\infty}}\Xip d\tmu_{t,q} \leq \frac{t}{\alpha}\int_{E_A^{\infty}}(\tilde{f}-\gamma)d\tmu_{t,q}.
\end{align}
By applying the variational principle with (\ref{ineq:FPhiIntcomp}) we have that
\begin{align}\label{ineq:VPDefofk}
    0\leq h(\tmu_{t,q}) &= P(t,q)-t\int_{E_A^{\infty}}\Xip d\tmu_{t,q}-q\int_{E_A^{\infty}}\tilde{f}d \tmu_{t,q}\nonumber\\
    & \leq P(t,q)+\frac{t}{\alpha}\int_{E_A^{\infty}}(\tilde{f}-\gamma)d\tmu_{t,q}-q\int_{E_A^{\infty}}\tilde{f}d \tmu_{t,q}\nonumber\\
    &= P(t,q) + \left(\frac{t}{\alpha}-q\right)\int_{E_A^{\infty}}\tilde{f}d \tmu_{t,q}- \frac{t\gamma}{\alpha} := k(q).
\end{align}
So $0\leq h(\tmu_{t,q})\leq k(q)$ and differentiating $k(q)$ we yield
\begin{align}\label{deri:kofq}
    k'(q) = \frac{\partial P}{\partial q}(t,q)-\frac{\partial P}{\partial q}(t,q) +\left(\frac{t}{\alpha}-q\right)\frac{\partial^2 P}{\partial q^2}(t,q) =  \left(\frac{t}{\alpha}-q\right)\frac{\partial^2 P}{\partial q^2}(t,q).
\end{align}

So for $q$ large and negative $k'(q) > 0$, as $q\to-\infty$, $k(q)$ (moving in the negative direction) is decreasing and bounded below by $0$. So applying the squeeze theorem to $\frac{h(\tmu_{t,q})}{q}$ via (\ref{ineq:VPDefofk}) and taking the limit as $q\to -\infty$ in (\ref{ineq:VPineq2}) we yield
\begin{equation}\label{ineq:Minfty1}
    \lim_{q\to-\infty} \int_{E_A^{\infty}}fd\tmu_{t,q} \leq \int_{E_A^{\infty}}fd\mu_{\infty}.
\end{equation}
Now since $\tilde{f}$ is continuous and bounded below, $-\tilde{f}$ is continuous and bounded above, so by \cite[Lemma 1]{jenkinsonZeroTemperatureLimits2005} the map $\MM\ni\mu\mapsto\int_{E_A^{\infty}}\tilde{f} d\mu\in\RR$ is lower semi-continuous with respect to the weak* topology. So we also have 
\begin{equation}\label{ineq:Minfty2}
    \infty>\liminf_{q\to-\infty}\int_{E_A^{\infty}}\tilde{f}d\tmu_{t,q} = \lim_{q\to-\infty}\int_{E_A^{\infty}}\tilde{f}d\tmu_{t,q} \geq  \int_{E_A^{\infty}}\tilde{f} d\mu_{\infty}.
\end{equation}
Combining (\ref{ineq:Minfty1}) and (\ref{ineq:Minfty2}) yields that 
\[
    \lim_{q\to-\infty} \int_{E_A^{\infty}} \tilde{f} d\tmu_{t,q} =  \int_{E_A^{\infty}} \tilde{f} d\mu_{\infty}.
\]
So the limit exists and is finite. Now, we need only check that $\mu_{\infty}$ is $\tilde{f}$-minimizing. Suppose by way of contradiction that $\mu_{\infty}$ is not $\tilde{f}$-minimizing. Then there exists $\nu\in \MM$ that achieves a smaller $F$-exponent. In particular,
\begin{equation}\label{ineq:minimizing1}
    \int_{E_A^{\infty}}\tilde{f} d\nu - \int_{E_A^{\infty}}\tilde{f}d\mu_{\infty} = -\varepsilon<0.
\end{equation}
 Note that since $\tilde{f}$ is bounded below and integrable, for some $q<0$ we have that $0 > \int_{E_A^{\infty}}q\tilde{f}d\nu >-\infty$. Further, we may find $q<0$ such that $(0,q)\in D$, so,  $P(q\tilde{f}) <\infty$ as well. So, by the variational principle, and since $\tmu_{0,q}$ is an equilibrium state, $h(\nu)<\infty$. Define the following affine map
 \begin{equation}\label{eqn:AffineMap1}
        \ell_\nu(q) = h(\nu)+\int_{E_A^{\infty}}f_{t,q}d\nu.
 \end{equation}
 Then, as $q\to-\infty$, the map $(-\infty,q_0(t))\ni q\mapsto p'(q) = \int_{E_A^{\infty}}\tilde{f} \tmu_{t,q}\in\RR$ is a function which decreases to its limit. So,
 \begin{equation}\label{eqn:Minfty1}
        \int_{E_A^{\infty}}\tilde{f}d\mu_{\infty}\leq \int_{E_A^{\infty}}\tilde{f}d\tmu_{t,q} = p'(q) \text{ for all } q<q_0(t).
 \end{equation}
 Taking the derivative of (\ref{eqn:AffineMap1}) with respect to $q$ in conjunction with (\ref{ineq:minimizing1}) gives us
 \begin{equation}\label{ineq:AffineMap1}
    \ell_{\nu}'(q) = \frac{d}{dq}\left(t\int_{E_A^{\infty}}\Xip d\nu+q\int_{E_A^{\infty}}\tilde{f}d\nu\right) = \int_{E_A^{\infty}}\tilde{f}d\nu = -\varepsilon +\int_{E_A^{\infty}}\tilde{f}d\mu_{\infty}\leq -\varepsilon +\frac{\partial P}{\partial q}(t,q).
 \end{equation}
 However, (\ref{ineq:AffineMap1}) means that for large negative $q$ we have that $\ell_{\nu}(q)>P(t,q)$ which means
 \[
    h(\nu)+\int_{E_A^{\infty}}f_{t,q}d\nu>P(t,q), 
 \]
which contradicts the variational principal. \hfill $\lightning$\newline
Hence, $\mu_{\infty}$ must be $\tilde{f}$-minimizing. Hence, there exists $\xi_{\min}$ independent of $0\leq t\leq h$ such that $\lim_{q\to-\infty} \int_{E_A^{\infty}}\tilde{f}d\tmu_{t,q} = \xi_{\min}$ as desired.

\end{proof}

\begin{remark}
    The case where $F$ is bounded, is much simpler. In this case, $\tilde{f}$ is a bounded continuous function. Thus, $\displaystyle\lim_{q\to-\infty} \int_{E_A^{\infty}}\tilde{f}d\tmu_{t,q} = \int_{E_A^{\infty}}\tilde{f}d\tmu_{\infty}$ as $\tmu_{\infty}$ is a weak* accumulation point. The argument for $\tilde{f}$-minimizing is identical to the comparable $F$ case. 
\end{remark}

The above argument results in the following immediate corollary which will be helpful for the multifractal analysis.
\begin{corollary}\label{cor:qderivativelimit}
For any $0\leq t\leq h$ fixed, when $F$ and $\log\Phi'$ are comparable or bounded as in Lemma \ref{lem:Dopen} or Lemma \ref{lem:FBoundedDopen}, respectively we have that
      \[
      \lim_{q\to -\infty} \frac{\partial P}{\partial q}(t,q) = \xi_{\min}.
      \]
\end{corollary}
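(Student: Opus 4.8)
The plan is to observe that Corollary~\ref{cor:qderivativelimit} is essentially a repackaging of the limit identities already established in Proposition~\ref{MainResult3} and its accompanying remark. First I would recall that by Lemma~\ref{lem:DerivativesOfPressure}, for any $(t,q)\in D$ with the standing integrability hypothesis we have the identity
\[
    \frac{\partial P}{\partial q}(t,q) = \int_{E_A^{\infty}}\tilde{f}\;d\tmu_{t,q}.
\]
Thus the quantity whose limit we are computing is literally the $q$-dependent integral that was analyzed in Proposition~\ref{MainResult3}. The entire content of the corollary is therefore to identify the limit of this integral as $q\to-\infty$ with $\xi_{\min}$.

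Next I would split into the two cases corresponding to the two hypotheses. In the comparable case, Proposition~\ref{MainResult3} produces a weak* accumulation point $\mu_{\infty}$ of the Gibbs states $\{\tmu_{t,q_n}\}$ along any strictly decreasing sequence $q_n\to-\infty$ and shows both that the limit $\lim_{q\to-\infty}\int_{E_A^{\infty}}\tilde{f}\,d\tmu_{t,q}$ exists and equals $\int_{E_A^{\infty}}\tilde{f}\,d\mu_{\infty}$, and that $\mu_{\infty}$ is $\tilde{f}$-minimizing, i.e. $\int_{E_A^{\infty}}\tilde{f}\,d\mu_{\infty}=\xi_{\min}$. Chaining these two facts with the derivative identity above gives the claim immediately. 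In the bounded case, the remark following Proposition~\ref{MainResult3} records that the same conclusion holds (the argument being simpler since $\tilde{f}$ is then bounded and continuous, so the limit-of-integrals equals the integral against the accumulation point directly, and the $\tilde{f}$-minimizing argument is identical). So I would simply cite these and conclude.

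A small point worth addressing carefully is the passage from the sequential limit statement of Proposition~\ref{MainResult3} (along decreasing sequences $q_n\to-\infty$) to the genuine limit as the continuous parameter $q\to-\infty$ that appears in the corollary. This is handled by the monotonicity already in hand: since $p(q)=P(t,q)$ is convex in $q$ (strict convexity of $P$ is available once $f_{t,q}$ is not cohomologous to a constant, Lemma~\ref{lem:ND-Cohom}), its derivative $p'(q)=\frac{\partial P}{\partial q}(t,q)$ is monotone in $q$, so the full limit as $q\to-\infty$ exists and agrees with the limit along any sequence. I expect this monotonicity/sequential-to-continuous reconciliation to be the only genuine step requiring a word of justification; everything else is a direct substitution of previously proved identities. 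In fact, since Proposition~\ref{MainResult3} already phrases its conclusion as $\lim_{q\to-\infty}\int_{E_A^{\infty}}\tilde{f}\,d\tmu_{t,q}=\xi_{\min}$, the corollary reduces to a one-line application of Lemma~\ref{lem:DerivativesOfPressure}, and the proof can be stated in a single short paragraph.
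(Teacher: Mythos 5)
Your proof is correct and is essentially the paper's own: the paper presents Corollary \ref{cor:qderivativelimit} as an immediate consequence of Proposition \ref{MainResult3} (together with the remark covering the bounded case), combined with the identity $\frac{\partial P}{\partial q}(t,q)=\int_{E_A^{\infty}}\tilde{f}\,d\tmu_{t,q}$ from Lemma \ref{lem:DerivativesOfPressure}. Your sequential-to-continuous reconciliation is also already built into the paper's argument, since the proof of Proposition \ref{MainResult3} obtains the existence of the continuous limit $\lim_{q\to-\infty}p'(q)$ directly from the monotonicity of $p'$ (convexity of $q\mapsto P(t,q)$), so no additional step is needed.
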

\begin{remark}\label{rmk:0qSequenceIsfMinimizing}
    Without the comparability or bounded assumptions, we note that the argument above shows that the measure $\mu_{0,\infty} := \displaystyle\lim_{q\to-\infty}\tmu_{0,q}$ is $\tilde{f}$ minimizing.
\end{remark}

\section{Multifractal Analysis of \textit{F}-exponents}
Our multifractal analysis of CGDMSs concerns the decomposition of $J$ derived from $\Phi$ satisfying the SOSC, which are cofinitely regular and finitely irreducible. The decomposition is constructed by Birkhoff averages determined by a H\"older family of functions $F$ that are uniformly bounded below. We will retrieve the spectrum by applying the SOSC to show that the Hausdorff dimensions of sets determined by the projection of the decomposition at the symbolic level agree with the Hausdorff dimensions of the decomposition formed on the level set. We define $J_{>1}$ as the set of points in $J$ that have non-unique codings in the symbolic representation $E_A^{\infty}$. The multifractal decomposition of $E_A^{\infty}$ by Birkhoff averages with respect to the amalgamated function $\tilde{f}$ of $F$ is given by level sets
\begin{equation}
     E_A^{\infty}(\xi) = \left\{\om\in E_A^{\infty} \;\Big|\; \lim_{n\to\infty}\frac{1}{n}\sum_{j=0}^{n-1} \tilde{f}(\sigma^j(\om)) = \xi \right\}.
\end{equation}
We let $E_A^{\infty'}$ denote the exceptional set where the limit does not exist. The projected level sets $J(\xi):=\pi(E_A^{\infty}(\xi))$ and the projected exceptional set $J':=\pi(E_A^{\infty'})$ may not partition $J$. We call the collection $\{J(\xi)\}_{\xi\in \RR}\cup\{J'\}$ the projected multifractal decomposition of $E_A^{\infty}$ by Birkhoff averages with respect to $\tilde{f}$. By definition, $J(\xi)$ is the set of points in $J$ for which there exists a coding $\om$ which achieves the $F$-exponent $\xi$. Alternatively, consider $x=\pi(\om)\in J\setminus J_{>1}$. Notice that 
\begin{equation}
    \pi(\sigma(\om)) = \phi_{\om_1}^{-1}\left(\bigcap_{n=1}^{\infty}\phi_{\om\vert_n}(X_{t(\om\vert_n)})\right).
\end{equation}
And so, 
\begin{equation}
    \tilde{f}(\sigma^j(\om)) = f_{\sigma^j(\om)_1}(\pi(\sigma^{j+1}(\om))) = f_{\sigma^j(\om)_1}\circ \phi^{-1}_{\om\vert_{j+1}}(x).
\end{equation}
Thus, to form a proper multifractal decomposition on the limit set, we take 
\begin{equation}
    J_1(\xi) = \left\{x\in J\setminus J_{>1} \;\Big|\; \lim_{n\to\infty}\frac{1}{n}\sum_{j=0}^{n-1} f_{\sigma^j(\om)_1}\circ \phi^{-1}_{\om\vert_j}(x) = \xi \right\}
\end{equation}
and let $J_1'$ be the exceptional set which is the collection of points where these limit do not exist.
Since $x\in J\setminus J_{>1}$ implies that $x = \pi(\om)$ for some unique $\om\in E_A^{\infty}$, the use of $\om$ in the definition of $J_1(\xi)$ is not ambiguous. We call the non-empty sets in the collection $\{J_1(\xi)\}_{\xi\in\mathbb{R}}\cup\{J_1'\cup J_{>1}\}$ the multifractal decomposition of $J$ by Birkhoff averages  with respect to $F$. These sets are well-defined and partition $J$.
We can see that the measures $\mu_{t,q}$ for $(t,q)\in D$ are supported on a $J(\xi)$ for some $\xi$ since by Birkhoff's Ergodic Theorem, $\int_{E_A^{\infty}} \tilde{f}d\tmu_{t,q} = \xi$ for some $\xi$ and this integral is equal to the Birkhoff average for $\tmu_{t,q}$-a.e. $\om\in E_A^{\infty}$. And so,
\begin{equation}
    \tmu_{t,q}(E_A^{\infty}(\xi)) = 1 \implies \mu_{t,q}(J(\xi)) = \tmu_{t,q}\circ\pi^{-1}(\pi(E_A^{\infty}(\xi)))\geq \tmu_{t,q}(E_A^{\infty}(\xi)) = 1.
\end{equation}
Hence, 
\begin{equation}\label{eqn:support}
        \mu_{t,q}(J(\xi))=1.
\end{equation}
Notice that $\mu_{t,q}(J_{>1})=0$ since for a point $x\in J_{>1}$ with non-unique codings $\om$ and $\tau$ there exist an $n$ and $m$ such that $\om\vert_n$ and $\tau\vert_m$ are incomparable with
\begin{equation}
    x\in \phi_{\om\vert_n}(X_{t(\om\vert_n)})\cap\phi_{\tau\vert_m}(X_{t(\tau\vert_m)})).
\end{equation}
So, by the SOSC by way of application of \cite[Theorem 19.7.2(c)]{urbanskiVolumeFinerThermodynamic2022}, $J_{>1}$ is contained in a countable union of measure zero sets and, hence, is measure $0$. Thus, we observe that 
\begin{equation}\label{eqn:supportUnique}
        \mu_{t,q}(J_1(\xi))=1.
\end{equation}

\begin{remark}
We remind the reader here that by Lemma \ref{lem:SpectrumIsTranslationOfFamilyInvariant} translating the family $F$ by a constant results in a bijection between the sets $J_F(\xi)$ and $J_{F'}(\xi-\varepsilon+K)$. Hence, projected multifractal decomposition of $E_A^{\infty}$ by Birkhoff averages with respect to $\tilde{f}$ is isomorphic to the projected multifractal decomposition of $E_A^{\infty}$ by Birkhoff averages with respect to $\tilde{f}'$, the translated family. For this reason, we may always assume, without loss of generality, that $F$ is strictly positive.
\end{remark}

\begin{remark}
If one can express $F$ as the sum of two families $F = G+H$ where $G$ is H\"older and uniformly bounded below and $H$ is H\"older and uniformly bounded, our analysis also applies. This is essentially a restatement of Remark \ref{rmk:LyaDistortion}.
\end{remark}

We will use a version of the volume lemma in the proof of our main theorem. First, recall that for a metric space $X$ and a Borel probability measure $\mu$ on $X$, we have
\begin{equation}
        \HD(\mu):= \inf\{HD(Y)\;|\; \mu(Y)=1\}.
\end{equation}
\begin{theorem}[c.f. \cite{urbanskiVolumeFinerThermodynamic2022} Theorem 19.8.30]\label{thm:VolumeLemma} Let $\Phi$ be a finitely irreducible CGDMS satisfying the SOSC and $\mu$ be a $\sigma$-invariant Borel probability measure on $E_A^{\infty}$ such that $\chi(\mu)<\infty$, then
\[
    \HD(\mu\circ\pi^{-1}) = \frac{h(\mu)}{\chi(\mu)}.
\]
\end{theorem}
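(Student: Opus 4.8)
The plan is to show that the projected measure $\nu := \mu\circ\pi^{-1}$ on $J$ has constant local dimension $h(\mu)/\chi(\mu)$ at $\nu$-almost every point, and then to convert this pointwise information into the Hausdorff dimension of the measure via a Billingsley-type lemma. Since the measures of interest are the ergodic Gibbs states $\tmu_{t,q}$, I would carry out the argument for ergodic $\mu$, for which the Birkhoff and Shannon--McMillan--Breiman limits below are genuine constants.

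The two quantitative inputs are an entropy estimate and a geometric estimate. For the entropy, the Shannon--McMillan--Breiman theorem gives, for $\mu$-a.e.\ $\om$,
\[
    -\frac{1}{n}\log\mu([\om\vert_n]) \xrightarrow[n\to\infty]{} h(\mu).
\]
For the geometry, the bounded distortion property (the fourth defining property of a CGDMS) yields $\diam\big(\phi_{\om\vert_n}(X_{t(\om\vert_n)})\big)\asymp \|\phi'_{\om\vert_n}\|$, and since $\log\|\phi'_{\om\vert_n}\|$ agrees with the Birkhoff sum $\sum_{j=0}^{n-1}\Xip(\sigma^j(\om))$ up to a uniformly bounded error, the Birkhoff ergodic theorem applied to the potential $\Xip$ (integrable precisely because $\chi(\mu)<\infty$, noting $\Xip<0$) gives, for $\mu$-a.e.\ $\om$,
\[
    \frac{1}{n}\log\diam\big(\phi_{\om\vert_n}(X_{t(\om\vert_n)})\big)\xrightarrow[n\to\infty]{} \int_{E_A^{\infty}}\Xip\,d\mu = -\chi(\mu).
\]

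The heart of the argument, and the step I expect to be the main obstacle, is transferring these cylinder estimates to genuine metric balls $B(x,r)$ in $X$, where $x=\pi(\om)$. Here the SOSC is essential: it guarantees that $J$ meets $\mathrm{Int}(X)$, so that (again by bounded distortion) each cylinder image $\phi_{\om\vert_n}(X_{t(\om\vert_n)})$ contains a ball of radius comparable to its diameter lying inside $J$, while a bounded-multiplicity covering argument controls how many cylinder images of a given generation a fixed ball can meet. Choosing $n=n(r)$ so that $\diam\big(\phi_{\om\vert_n}(X_{t(\om\vert_n)})\big)\asymp r$, i.e.\ $n \approx \tfrac{\log r}{-\chi(\mu)}$, and sandwiching $\nu(B(x,r))$ between comparable cylinder measures $\mu([\om\vert_n])$, the two limits above combine to give
\[
    \lim_{r\to 0}\frac{\log\nu(B(x,r))}{\log r} = \frac{h(\mu)}{\chi(\mu)} \qquad \text{for } \nu\text{-a.e.\ } x.
\]

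Finally, I would invoke the standard fact that a Borel probability measure whose local dimension equals a constant $d$ almost everywhere satisfies $\HD(\nu)=\inf\{\HD(Y)\mid\nu(Y)=1\}=d$: the bound $\HD(\nu)\le d$ follows by covering a full-measure set on which $\nu(B(x,r))\gtrsim r^{d+\eps}$ (upper local dimension $\le d$), and the bound $\HD(\nu)\ge d$ follows from the mass distribution principle applied to $\nu(B(x,r))\lesssim r^{d-\eps}$ (lower local dimension $\ge d$). This yields $\HD(\mu\circ\pi^{-1})=h(\mu)/\chi(\mu)$, as claimed.
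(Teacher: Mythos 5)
You should know at the outset that the paper never actually proves Theorem \ref{thm:VolumeLemma}: it is imported (``c.f.'') from \cite{urbanskiVolumeFinerThermodynamic2022}, Theorem 19.8.30, and the paper's only original content is the remark immediately following it, which explains how to obtain the uniform comparability constant in the cited proof without a cone condition (replacing Lemma 19.7.16 of the source by Lemma 19.3.12). Your outline follows the same strategy as that cited proof --- Shannon--McMillan--Breiman and Birkhoff along cylinders, a transfer from cylinders to balls, then a Billingsley-type local dimension argument --- so in spirit you are reconstructing the reference rather than diverging from the paper. Your restriction to ergodic $\mu$ is not a defect but a necessity: the paper only ever applies the theorem to the ergodic Gibbs states $\tmu_{t,q}$, and for non-ergodic invariant $\mu$ the displayed formula is in general \emph{false} (the dimension of the projection is an essential supremum of the dimensions of the ergodic components, whereas $h(\mu)/\chi(\mu)$ is a ratio of averages; mixing two ergodic measures with ratios $1$ and $1/2$ gives a projection of dimension $1$ but a ratio of averages strictly between), so the statement as reproduced in the paper arguably ought to carry an ergodicity hypothesis.

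That said, two steps of your sketch have genuine gaps, and they sit exactly at the points where the countably infinite alphabet makes the volume lemma delicate. First, Shannon--McMillan--Breiman for the partition $\alpha=\{[e]\}_{e\in E}$ into first-level cylinders requires $H_{\mu}(\alpha)<\infty$, which is not automatic when $E$ is infinite, and neither is $h(\mu)<\infty$; you must first show that $\chi(\mu)<\infty$ forces $H_{\mu}(\alpha)<\infty$. This does follow, but only by using conformality: the OSC and bounded distortion imply the sets $\phi_e(\text{Int}(X_{t(e)}))$ are pairwise disjoint and each contains a ball of radius comparable to $\||\phi_e'|\|_{X_{t(e)}}$ inside a fixed bounded region, so $\sum_{e}\||\phi_e'|\|^d_{X_{t(e)}}<\infty$; ordering $E$ so these norms decrease gives $-\log\||\phi_{e_k}'|\|\geq \frac{1}{d}\log k - C$, hence $\chi(\mu)<\infty$ implies $\sum_k \mu([e_k])\log k<\infty$, which implies $H_{\mu}(\alpha)<\infty$ by the standard splitting into indices with $\mu([e_k])\leq k^{-2}$ and the rest. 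Without this, your first displayed limit is unjustified. Second, your multiplicity claim is wrong as stated: when $E$ is infinite, a ball $B(x,r)$ can meet \emph{infinitely many} cylinder images of a given generation. The correct device is the Moran (stopping-time) cover, truncating each $\om$ at the first $n$ with $\diam(\phi_{\om\vert_n}(X_{t(\om\vert_n)}))\leq r$, and the content of the transfer step is that this cover has multiplicity bounded independently of $x$ and $r$. Proving that requires each such image to contain a ball of radius comparable to $r$ inside $\phi_{\om\vert_n}(\text{Int}(X_{t(\om\vert_n)}))$ --- not ``inside $J$'', which is impossible since $J$ generally has empty interior --- with a \emph{uniform} comparability constant; producing that constant is precisely where the cited source invokes the cone condition and where the paper's remark substitutes \cite[Lemma 19.3.12]{urbanskiVolumeFinerThermodynamic2022}. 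Your sketch presupposes this geometric input rather than supplying it, so as written the hard inequality $\HD(\mu\circ\pi^{-1})\geq h(\mu)/\chi(\mu)$ is not established.
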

\begin{remark}
    One should note that the referenced source's argument for the volume lemma uses the cone condition by way of \cite[Lemma 19.7.16]{urbanskiVolumeFinerThermodynamic2022} to produce a uniform constant $L\geq 1$. We can still produce a uniform constant by applying \cite[Lemma 19.3.12]{urbanskiVolumeFinerThermodynamic2022} with the choice of constants $\kappa_1=1$ and $\kappa_2=\diam(X)+1$ and the derivation of this constant does not require a cone condition. 
\end{remark}

\subsection{Derivation of the Multifractal Spectrum}

We are now ready to address our main result which we recall here.

\begin{customtheorem}{\ref{thm:NDThm1}}
Let $\Phi = \{\phi_e\}_{e\in\NN}$ be a cofinitely regular finitely irreducible CGDMS satisfying the SOSC. Let $F$ be a strictly positive H\"older family of potentials that are either comparable to $\log\Phi'$ (see  Lemma \ref{lem:Dopen}) or bounded. Let $D$ be the Manhattan region of the associated pressure function $P(t,q)$, and $(t,q)\in D$. If the amalgamated functions $\tilde{f}$ and $\Xip$ of $F$ and $\log\Phi'$ respectively are such that 
\begin{equation}
    \int_{E_A^{\infty}}(|\tilde{f}|+|\Xip|)d\tmu_{t,q}<\infty,
\end{equation}
where $\tmu_{t,q}$ is the unique shift invariant Borel probability measure equivalent to the unique $t\Xip+q\tilde{f}$-conformal probability measure $\tilde{m}_{t,q}$ on $E_A^{\infty}$ such that $\LL^{*}_{f_{t,q}}\tilde{m}_{t,q} = e^{P(t,q)}\tilde{m}_{t,q}$ (defined in \cite[Chapter 17.6]{urbanskiVolumeFinerThermodynamic2022}), then:
\begin{enumerate}
    \item The system of equations 
\begin{equation}\label{eqn:Diffeq1}
    \begin{cases}P(t,q) = q\xi \\ \frac{\partial P}{\partial q}(t,q) = \xi \end{cases} 
\end{equation}
has a unique solution $(t(\xi),q(\xi))\in D_0 = \{(t,q)\in D\;|\; 0\leq t\leq h\}$ for $\xi\in (\xi_{min},\infty)$ and $h$ the Bowen parameter of $\Phi$.
    \item $t(\xi)$ and $q(\xi)$ are real analytic.
    \item $t(\xi) = \HD(J(\xi))$. 
    \item $t(\xi) = \HD (J_1(\xi))$.
\end{enumerate}
\end{customtheorem}

 $t(\xi)$ gives both the Hausdorff dimension of the $J(\xi)$ component of the projected multifractal decomposition of $E_A^{\infty}$ by Birkhoff averages with respect to $\tilde{f}$ and the Hausdorff dimension of the $J_1(\xi)$ component of the multifractal decomposition of $J$ by Birkhoff averages with respect to $F$.
 We call the function $t(\xi)$ the $F$-spectrum. We begin at the end by showing that if the solution $(t(\xi),q(\xi))\in D$ exists, then $t(\xi)$ gives the Hausdorff dimensions of both $J(\xi)$ and $J_1(\xi)$, and the solution is in $D_0$. Although the analysis of Fan et al.\;\cite{fanKhintchineExponentsLyapunov2009} may be adapted directly to our scenario, we improve upon the proof by replacing their lower estimate analysis via local Markov dimension with one that relies solely on the variational principle and the volume lemma. The upper estimate analysis is analogous to the Fan et al.\;\cite{fanKhintchineExponentsLyapunov2009} argument which we recreate here in our setting for completeness.

\begin{proposition}\label{prop:SpectrumIsT}
    Suppose $\Phi = \{\phi_e\}_{e\in\NN}$ and $F$ are as in Theorem \ref{thm:NDThm1}. If the system 
    \begin{align}\label{sys:MFSys}
    \begin{cases}P(t,q) = q\xi \\ \frac{\partial P}{\partial q}(t,q) = \xi \end{cases}
    \end{align}
    has a unique real analytic solution $(t(\xi),q(\xi))\in D$ for $\xi\in (\xi_{min},\infty)$, then $t(\xi)$ gives the Hausdorff dimension of the $J(\xi)$ component of the projected $\tilde{f}$-multifractal decomposition of $E_A^{\infty}$ by Birkhoff averages. Furthermore, $t(\xi)$ gives the Hausdorff dimension of the $J_1(\xi)$ component of the $F$-multifractal decomposition of $J$ by Birkhoff averages and $(t(\xi),q(\xi))\in D_0$. In the case that $F$ is bounded we replace $\infty$ above with $\xi_{\max} := \displaystyle\sup_{\om\in E_A^{\infty}}\{\xi(\om)\}$ so that $\xi\in(\xi_{\min},\xi_{\max})$.
\end{proposition}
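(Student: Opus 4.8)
The plan is to prove the two dimension identities by matching upper and lower bounds and then to deduce the containment $(t(\xi),q(\xi))\in D_0$ as a corollary. Write $(t,q)=(t(\xi),q(\xi))$ for the assumed solution and $\tmu:=\tmu_{t,q}$. Since every $x\in J_1(\xi)$ has a unique coding $\om\in E_A^\infty(\xi)$ by the displayed computation preceding the statement, we have $J_1(\xi)\subseteq J(\xi)$; hence it suffices to bound $\HD(J(\xi))$ from above and $\HD(J_1(\xi))$ from below, after which the sandwich $\HD(J_1(\xi))\le\HD(J(\xi))$ forces both quantities to equal $t(\xi)$.

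For the lower bound I would use the streamlined argument promised in the text, invoking only the variational principle and the volume lemma (Theorem \ref{thm:VolumeLemma}) rather than a local Markov dimension. Because $(t,q)\in D$ and the integrability hypothesis holds, $\tmu$ is the equilibrium state for $f_{t,q}=t\Xip+q\tilde{f}$, so
\[
P(t,q)=h(\tmu)+t\int_{E_A^\infty}\Xip\,d\tmu+q\int_{E_A^\infty}\tilde{f}\,d\tmu.
\]
By Lemma \ref{lem:DerivativesOfPressure} the two integrals are $\partial P/\partial t$ and $\partial P/\partial q$; the second equation of the system gives $\int\tilde{f}\,d\tmu=\xi$, and combining $\int\Xip\,d\tmu=-\chi(\tmu)$ with the first equation $P(t,q)=q\xi$ collapses the display to $h(\tmu)=t\,\chi(\tmu)$. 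The integrability hypothesis forces $0<\chi(\tmu)<\infty$, so the volume lemma yields $\HD(\tmu\circ\pi^{-1})=h(\tmu)/\chi(\tmu)=t(\xi)$. Since $\tmu(E_A^\infty(\xi))=1$ and therefore $\mu_{t,q}(J(\xi))=\mu_{t,q}(J_1(\xi))=1$ by \eqref{eqn:support} and \eqref{eqn:supportUnique}, we obtain $\HD(J_1(\xi))\ge t(\xi)$.

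The upper bound is the part I expect to be most delicate, and here I recreate the Fan et al.\ covering estimate in our setting. For $\delta>0$ and $n\in\NN$ set $G_n(\delta)=\{w\in E_A^n : |\tfrac1n S_w\tilde{f}-\xi|<\delta\}$, where $S_w\tilde{f}$ is well defined up to the bounded-distortion constant of Lemma \ref{lem:BVErgodicSums}. Every $\om\in E_A^\infty(\xi)$ eventually lies in $\bigcup_{w\in G_n(\delta)}[w]$, so the images $\{\phi_w(X_{t(w)})\}_{w\in G_n(\delta)}$ cover $\pi$ of the relevant tail set, with diameters tending to $0$ uniformly in $n$ by the common contraction ratio. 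Using $\diam(\phi_w(X_{t(w)}))^s\asymp\exp(sS_w\Xip)$ and the identity $sS_w\Xip=(sS_w\Xip+qS_w\tilde{f})-qS_w\tilde{f}$, membership in $G_n(\delta)$ bounds $\exp(-qS_w\tilde{f})\le\exp(-qn\xi+|q|n\delta)$ irrespective of the sign of $q$, giving
\[
\sum_{w\in G_n(\delta)}\diam(\phi_w(X_{t(w)}))^s\le C\,\exp(-qn\xi+|q|n\delta)\,Z_n(f_{s,q}).
\]
Since $Z_n(f_{s,q})$ grows at exponential rate $P(s,q)$ and $\partial P/\partial t<0$ gives $P(s,q)<P(t,q)=q\xi$ for any $s>t(\xi)$ with $(s,q)\in D$ (available because $D$ is open), the exponent equals $(|q|\delta-\eta)n$ with $\eta=q\xi-P(s,q)>0$; choosing $\delta<\eta/|q|$ sends the covering sum to $0$, whence $\mathcal{H}^s(J(\xi))=0$ and $\HD(J(\xi))\le s$ for every such $s$, so $\HD(J(\xi))\le t(\xi)$. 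The main obstacle is precisely the bookkeeping in this estimate: controlling $S_w\tilde{f}$ and $S_w\Xip$ on cylinders through the distortion constant, taming the countable alphabet via finiteness of $Z_n(f_{s,q})$ on $D$, and matching $\delta$ against the pressure gap $\eta$.

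Combining the two bounds with $J_1(\xi)\subseteq J(\xi)$ yields $t(\xi)=\HD(J(\xi))=\HD(J_1(\xi))$. Finally $t(\xi)=\HD(J(\xi))\ge 0$ trivially, while $t(\xi)=\HD(J(\xi))\le\HD(J)=h$ by Bowen's formula since $J(\xi)\subseteq J$; hence $0\le t(\xi)\le h$, i.e.\ $(t(\xi),q(\xi))\in D_0$. In the bounded-$F$ case the identical argument applies once $\infty$ is replaced by $\xi_{\max}$, the sole difference being that the admissible parameters $\xi$ range over the finite interval $(\xi_{\min},\xi_{\max})$.
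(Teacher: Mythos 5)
Your proposal is correct and follows essentially the same two-sided strategy as the paper: your lower bound (variational principle plus the volume lemma, Theorem \ref{thm:VolumeLemma}, applied to the equilibrium state $\tmu_{t(\xi),q(\xi)}$, which is exactly how the paper replaces the local Markov dimension argument of Fan et al.) is the paper's argument verbatim, and your upper bound is the same covering estimate over cylinders on which the Birkhoff average is $\delta$-close to $\xi$. The one genuine organizational difference is in the case analysis: your pressure-gap bookkeeping with $\eta=q\xi-P(s,q)$ and the condition $|q|\delta<\eta$ handles $q(\xi)>0$, $q(\xi)<0$, and $q(\xi)=0$ uniformly, whereas the paper chooses its $\varepsilon_0$ by dividing the pressure gap by $q(\xi)$ and is therefore forced to dispose of $q(\xi)=0$ by a separate appeal to Bowen's formula (there $P(t(\xi),0)=0$ gives $t(\xi)=h=\HD(J)\geq\HD(J(\xi))$ directly); your version is a mild streamlining of the same route. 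Two details to tighten: phrase the smallness condition as $|q|\delta<\eta$ rather than $\delta<\eta/|q|$, since the latter is undefined (though vacuously satisfiable) when $q(\xi)=0$; and justify the upper bound $Z_n(f_{s,q})\leq Ce^{nP(s,q)}$ explicitly via the Gibbs property of $\tmu_{s,q}$ (equivalently, convert the covering sum into $\sum_K\tmu_{s,q}(K)\leq 1$ as the paper does), because Fekete subadditivity alone only yields $Z_n\geq e^{nP}$, which points in the wrong direction.
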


\begin{proof}
    For the lower estimate we first recall that $\mu_{t(\xi),q(\xi)}(J(\xi))=1$ and begin by noting that for the solution to the system $(t(\xi),q(\xi))$, the associated Gibbs state $\mu_{t(\xi),q(\xi)}$ is an equilibrium state (Theorem \ref{thm:Gibbs&EquilibriumStates}), so the variational principle (Theorem \ref{thm:VP}) applies. Further since $\Phi$ is finitely irreducible and satisfies the SOSC the volume lemma (Theorem \ref{thm:VolumeLemma}) applies. Applying these results along with the definition of the characteristic Lyapunov exponent with respect to $\mu_{t(\xi),q(\xi)}$ (Definition \ref{def:Lyapunov}) and the definition of the Hausdorff dimension of measure given in the introduction of this section gives that
    \begin{align*}
        P(t(\xi),q(\xi)) &=q(\xi)\cdot \xi = h(\mu_{t(\xi),q(\xi)})+t(\xi)\frac{\partial P}{\partial t}(t(\xi),q(\xi))+q(\xi)\frac{\partial P}{\partial q}(t(\xi),q(\xi))\\
        \implies & q(\xi)\cdot \xi = h(\mu_{t(\xi),q(\xi)})-t(\xi)\chi(\mu_{t(\xi),q(\xi)})+q(\xi)\cdot \xi\\
        \implies & t(\xi) = \frac{h(\mu_{t(\xi),q(\xi)})}{\chi(\mu_{t(\xi),q(\xi)})} = \HD(\mu_{t(\xi),q(\xi)}) \leq \HD(J(\xi)).
    \end{align*}
    For the other inequality, we first consider the case of $q(\xi)\neq 0$. The case of $q(\xi)=0$ will be addressed at the end of the proof. We start by letting $(t,q)\in D$ and $\tmu_{t,q} := \mu_{f_{t,q}}$ be the corresponding Gibbs measure on $E_A^{\infty}$ where $\tmu_{t,q}=\mu_{t,q}\circ\pi^{-1}$ with $\mu_{t,q}$ on $J$. Then the Gibbs property grants that 
    \begin{align}\label{eqn:Gibbs1}
        \tmu_{t,q}([\om\vert_{n}])\asymp \exp(-nP(t,q))\cdot\|\phi'_{\om\vert_n}\|^t_{X_{t(\om\vert_n)}} \prod_{j=1}^{n} \exp(f_{\om_j}(\pi(\sigma^j(\om))))^q.
    \end{align}
    By applications of the bounded distortion property \cite[Lemmas 19.3.9 and 19.3.11]{urbanskiVolumeFinerThermodynamic2022} we have that 
    \begin{align} \label{eqn:Asymp1}
        \diam(\phi_{\om\vert_n}(X_{t(\om\vert_n)}))\asymp \|\phi_{\om\vert_n}'\|_{X_{t(\om\vert_n)}}.
    \end{align}
    Then combining (\ref{eqn:Gibbs1}) and (\ref{eqn:Asymp1}) one obtains
    \begin{align}\label{eqn:Gibbs2}
         \tmu_{t,q}([\om\vert_{n}]) \asymp \exp(-nP(t,q))\cdot\diam(\phi_{\om\vert_n}(X_{t(\om\vert_n)}))^t_{X_{t(\om\vert_n)}} \prod_{j=1}^{n} \exp(f_{\om_j}(\pi(\sigma^j(\om))))^q.
    \end{align}
    Now, for any $t>t(\xi)$, we consider an $\varepsilon_0>0$ such that
    \begin{equation} \label{ineq:q>0}
        0<\varepsilon_0<\frac{P(t(\xi),q(\xi))-P(t,q(\xi))}{q(\xi)} \hspace{.5cm} \text{ when } q(\xi)>0
    \end{equation}
    and 
    \begin{equation} \label{ineq:q<0}
        0<\varepsilon_0<\frac{P(t,q(\xi))-P(t(\xi),q(\xi))}{q(\xi)} \hspace{.5cm} \text{ when } q(\xi)<0.
    \end{equation}
    Such an $\varepsilon_0$ exists since $\frac{\partial P}{\partial t}<0$. Now for $n\geq 1$ we set
    \begin{align*}
        J_{\xi}^n(\varepsilon_0)&:= \pi\left(\left\{\om\in E_{A}^{\infty}\;\Bigg|\;\xi-\varepsilon_0 <\frac{1}{n}\sum_{j=1}^{n}\tilde{f}(\sigma^j(\om))<\xi+\varepsilon_0\right\}\right).
    \end{align*}
    Then $J(\xi)\subset \displaystyle \bigcup_{N=1}^{\infty}\bigcap_{n=N}^{\infty} J_{\xi}^n(\varepsilon_0)$. Now let $\II(n,\xi,\varepsilon_0)$ be the collection of $n^{th}$ level cylinders $[w_1\dots w_n]$ such that 
    \begin{equation}
        \xi -\varepsilon_0<\frac{1}{n}\sum_{j=1}^{n}\tilde{f}(\sigma^j(\om))<\xi+\varepsilon_0 \hspace{.5cm} \text{ when } \om\in [w_1\dots w_n].
    \end{equation}
    $\II(n,\xi,\varepsilon_0)$ is non-empty for $n$ large enough. To see this, note that $\tilde{f}$ is H\"older continuous on cylinders. Then by the triangle inequality for $\om,\tau\in [w_1\dots w_n]$,
    \begin{equation}
        \left|\frac{1}{n}\sum_{j=1}^{n}\tilde{f}(\sigma^j(\om))-\frac{1}{n}\sum_{j=1}^{n}\tilde{f}(\sigma^j(\tau))\right|\leq \frac{1}{n}\sum_{j=1}^{n}c\left(k^{n-j}\right)^\beta < \frac{c}{n}\cdot\frac{k^{\beta}}{1-k^{\beta}}.
    \end{equation}
    Notice that $J_{\xi}^n(\varepsilon_0) = \displaystyle\bigcup_{K\in \II(n,\xi,\varepsilon_0)}\pi(K)$ and so $\{\pi(K)\:|\: K\in \II(n,\xi,\varepsilon_0) \text{ for } n\geq 1\}$ covers $J_{\xi}^n(\varepsilon_0)$. We note the following multiplication by $1$ which we will analyze to estimate the Hausdorff Dimension of $J(\xi)$.
    \begin{equation}\label{eqn:HDestimate1}
         \diam(\pi(K))^t = \frac{e^{nP(t,q(\xi))}}{\displaystyle\prod_{j=1}^n \exp(f_{\om_j}(\pi(\sigma^j(\om))))^q}\cdot\frac{\diam(\pi(K))^t\displaystyle\prod_{j=1}^n \exp(f_{\om_j}(\pi(\sigma^j(\om))))^q}{e^{nP(t,q(\xi))}}.
    \end{equation}
    We also note that for $\om \in K\in \II(n,\xi,\varepsilon_0)$, if $q(\xi)>0$
    \begin{equation}\label{ineq:CoverIneq1}
        n(\xi-\varepsilon_0)<\sum_{j=1}^{n}\tilde{f}(\sigma^j(\om)) \implies \frac{1}{e^{nq(\xi)(\xi-\varepsilon_0)}}>\frac{1}{\displaystyle\prod_{j=1}^{n}\exp(\tilde{f}(\sigma^j(\om)))^{q(\xi)}},
    \end{equation}
    and if $q(\xi)<0$,
    \begin{equation}\label{ineq:CoverIneq2}
        n(\xi+\varepsilon_0)>\sum_{j=1}^{n}\tilde{f}(\sigma^j(\om)) \implies \frac{1}{e^{nq(\xi)(\xi+\varepsilon_0)}}>\frac{1}{\displaystyle\prod_{j=1}^{n}\exp(\tilde{f}(\sigma^j(\om)))^{q(\xi)}}.
    \end{equation}
    Now we have the tools to complete the dimension estimate. We first focus on the case of $q(\xi)>0$. By choice of $\varepsilon_0$, (\ref{ineq:q>0}), and since $(t(\xi),q(\xi))$ solves (\ref{eqn:Diffeq1}), we have that 
    \begin{align}\label{ineq:IsGeo1}
        &\varepsilon_0q(\xi)<P(t(\xi),q(\xi))-P(t,q(\xi))\nonumber\\
        &\Rightarrow P(t,q(\xi))+\varepsilon_0q(\xi)<P(t(\xi),q(\xi))\nonumber\\
        &\Rightarrow P(t,q(\xi))+\varepsilon_0q(\xi)-q(\xi)\xi<P(t(\xi),q(\xi))-q(\xi)\xi = 0.
    \end{align}
    And so, applying the Gibbs property, (\ref{eqn:Gibbs2}), along with (\ref{ineq:CoverIneq1}) and (\ref{ineq:IsGeo1}) to (\ref{eqn:HDestimate1}) we have that 
    \begin{equation} \sum_{n=1}^{\infty}\sum_{K\in\II(n,\xi,\varepsilon_o)}\diam(\pi(K))^t \leq C\sum_{n=1}^{\infty}e^{n(P(t,q(\xi))-q(\xi)(\xi-\varepsilon_0))}\sum_{K\in\II(n,\xi,\varepsilon_o)}\tmu_{t,q}(K)<\infty.
    \end{equation}
    Next, we focus on the case of $q(\xi)<0$. By choice of $\varepsilon_0$, (\ref{ineq:q<0}), and since $(t(\xi),q(\xi))$ solves (\ref{eqn:Diffeq1}), we have that 
    \begin{align}\label{ineq:IsGeo2}
        &\varepsilon_0q(\xi)>P(t,q(\xi))-P(t(\xi),q(\xi))\nonumber\\
        &\Rightarrow P(t,q(\xi))-\varepsilon_0q(\xi)<P(t(\xi),q(\xi))\nonumber\\
        &\Rightarrow P(t,q(\xi))-\varepsilon_0q(\xi)-q(\xi)\xi<P(t(\xi),q(\xi))-q(\xi)\xi = 0.
    \end{align}
    And so, applying the Gibbs property, (\ref{eqn:Gibbs2}), along with (\ref{ineq:CoverIneq2}) and (\ref{ineq:IsGeo2}) to (\ref{eqn:HDestimate1}) we have that 
    \begin{equation} \sum_{n=1}^{\infty}\sum_{K\in\II(n,\xi,\varepsilon_o)}\diam(\pi(K))^t \leq C\sum_{n=1}^{\infty}e^{n(P(t,q(\xi))-q(\xi)(\xi+\varepsilon_0))}\sum_{K\in\II(n,\xi,\varepsilon_o)}\tmu_{t,q}(K)<\infty.
    \end{equation}
        For all $K\in\II(n,\xi,\varepsilon_o)$, $\diam(\pi(K))$ does not exceed $s^n\max\{\diam(X_{t(v)})\;|\;v\in V\}$. So, for a given $\delta>0$, one may take $n$ large enough such that $\{\pi(K)\}_{K\in\II(n,\xi,\varepsilon_o)}$ forms a $\delta$-cover of $J$. Thus, the above estimates show for $q(\xi)\neq 0$ that $t(\xi)\leq HD(J(\xi))\leq t(\xi)$, which implies that $\HD(J(\xi))=t(\xi)$. And so, $0\leq t(\xi)\leq h$, where $h$ is the Bowen parameter of $\Phi$ so $(t(\xi),q(\xi))\in D_0$ when $q(\xi)\neq 0$. We must now consider the case of $q(\xi)=0$ to complete the proof of the claim. Notice that in the case of $q(\xi)=0$ the $F$-exponent is $\xi$, $\mu_{t(\xi),0}$-a.e. and since $(t(\xi),q(\xi))\in D$ we have
    \begin{equation}  
        0 = q(\xi)\xi = P(t(\xi),q(\xi))= P(t(\xi),0) = P(t(\xi)) = \lim_{n\to\infty}\frac{1}{n}Z_n(t(\xi)).
    \end{equation}
    Further, since $\Phi$ is a finitely irreducible CGDMS, by Bowen's Formula \cite[Theorem 19.6.4]{urbanskiVolumeFinerThermodynamic2022} and regularity of $\Phi$, $P(t(\xi)) = 0$ implies that
    \begin{equation}
        t(\xi) = h = HD(J) = \sup\{\HD(J_F)\;|\; F\subseteq E, \#F<\infty\}.
    \end{equation}
     In particular, $(t(\xi),0)\in D_0$ and $t(\xi) = h = HD(J)$. From our first estimation we already have that $t(\xi) = HD(\mu_{t(\xi),0})\leq HD(J(\xi))$. And since $J(\xi)\subseteq J$, we also have that $HD(J(\xi))\leq HD(J) = t(\xi)$. This completes the Hausdorff dimension analysis, and so it must be the case that $HD(J(\xi)) = t(\xi)$ for all possible $q(\xi)$ as desired. To show the equivalence of spectra between the projected decomposition and the decomposition on the limit set, we recall that since $J_1(\xi) = J(\xi)\setminus J_{>1}$ and $\mu_{t(\xi),q(\xi)}(J_1(\xi))=1$, the argument above actually shows that 
    \begin{equation}
        t(\xi) = \HD(\mu_{t(\xi),q(\xi)})\leq \HD(J_1(\xi))\leq \HD(J(\xi))= t(\xi)
    \end{equation}
    for all achievable $F$-exponents $\xi$, and so the spectrum function $t(\xi)$ for the projected multifractal decomposition of $E_A^{\infty}$ with respect to $\tilde{f}$ is identical to the spectrum function for the multifractal decomposition of $J$ with respect to $F$. 
    \end{proof}
    \noindent 
    The above proof does not require that the alphabet is infinite. This observation leads to the following corollary.
    \begin{corollary}
        If $\Phi$ is finite, irreducible, and satisfies the SOSC, and if the system (\ref{eqn:Diffeq1}) has unique solution $(t(\xi),q(\xi))\in D$ for $\xi\in(\xi_{\min},\xi_{\max})$, then $HD(J(\xi)) = HD(J_1(\xi))= t(\xi)$ and further $(t(\xi),q(\xi))\in D_0$.
    \end{corollary}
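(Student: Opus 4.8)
The plan is to observe that the corollary is nothing more than the specialization of Proposition \ref{prop:SpectrumIsT} to a finite alphabet, and that the proof of that proposition never used the infinitude of $E$. First I would verify that a finite, irreducible CGDMS $\Phi$ satisfying the SOSC meets every hypothesis the proof of Proposition \ref{prop:SpectrumIsT} actually consumes. Finiteness of $E$ together with irreducibility of the incidence matrix $A$ yields finite irreducibility: there are only finitely many pairs $(e,f)\in E\times E$, and irreducibility supplies for each such pair an admissible connecting word, so collecting one word per pair produces a finite set $I\subseteq E_A^*$ witnessing finite irreducibility. Since $E$ is finite and each $X_{t(e)}$ is compact, any Hölder family $F$ is automatically bounded and summable (and we may take it strictly positive without loss of generality by Lemma \ref{lem:SpectrumIsTranslationOfFamilyInvariant}), so its amalgamated function $\tilde f$ is bounded, summable, and Hölder continuous on cylinders, and the same holds for $\Xip$. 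Consequently both $\tilde f$ and $\Xip$ are bounded, so the integrability condition $\int_{E_A^\infty}(|\tilde f|+|\Xip|)\,d\tmu_{t,q}<\infty$ is satisfied trivially; the Gibbs state $\tmu_{t(\xi),q(\xi)}$ exists, coincides with the equilibrium state by Theorem \ref{thm:Gibbs&EquilibriumStates}, and assigns full measure to the level sets via \eqref{eqn:support} and \eqref{eqn:supportUnique}.

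With the hypotheses in hand, I would rerun the argument of Proposition \ref{prop:SpectrumIsT} essentially verbatim. The lower estimate $t(\xi)\le\HD(J(\xi))$ combines the variational principle (Theorem \ref{thm:VP}), the volume lemma (Theorem \ref{thm:VolumeLemma}), and the two equations of the system \eqref{eqn:Diffeq1} exactly as before; in the finite setting every $\sigma$-invariant measure has entropy bounded by $\log|E|$ and Lyapunov exponent $\chi(\mu)=-\int\Xip\,d\mu<\infty$ since $\Xip$ is bounded, so the volume lemma applies with no extra hypotheses. The matching upper estimate runs through the Gibbs property \eqref{eqn:Gibbs2}, the bounded distortion asymptotic \eqref{eqn:Asymp1}, and the covering of $J(\xi)$ by the cylinder images $\{\pi(K)\}_{K\in\II(n,\xi,\varepsilon_0)}$, splitting on the sign of $q(\xi)$; these estimates are combinatorial in the words and carry over unchanged. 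Because $F$ is bounded, I would invoke the clause of Proposition \ref{prop:SpectrumIsT} that replaces $\infty$ by $\xi_{\max}=\sup_{\om\in E_A^\infty}\xi(\om)$, so the conclusion $\HD(J(\xi))=\HD(J_1(\xi))=t(\xi)$ holds on the full admissible interval $\xi\in(\xi_{\min},\xi_{\max})$.

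The only branch warranting a separate word is $q(\xi)=0$, where the proof appeals to Bowen's formula and regularity of $\Phi$ to pass from $P(t(\xi))=0$ to $t(\xi)=h=\HD(J)$. Here I would note that a finite irreducible CGDMS is automatically (cofinitely) regular: $\theta=-\infty$, and the Lyapunov pressure $P(t)$ is finite and real analytic on all of $\RR$, strictly decreasing, with $P(t)\to+\infty$ as $t\to-\infty$ (since $\|\phi_w'\|<1$) and $P(t)\to-\infty$ as $t\to+\infty$, hence has a unique zero $h=\HD(J)$. Thus regularity is free, Bowen's formula applies, and the case closes with $(t(\xi),0)\in D_0$. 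I do not expect a genuine obstacle: the entire content of the corollary is the bookkeeping confirmation that the finite case is a degenerate instance in which all structural hypotheses of Proposition \ref{prop:SpectrumIsT} hold automatically, after which its proof transfers word for word, giving $(t(\xi),q(\xi))\in D_0$.
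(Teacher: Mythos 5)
Your proposal is correct and matches the paper's own reasoning: the paper justifies this corollary with the single observation that the proof of Proposition \ref{prop:SpectrumIsT} never uses infinitude of the alphabet, which is precisely the claim you verify in detail (finite $+$ irreducible $\Rightarrow$ finitely irreducible, automatic boundedness and summability of $F$ and $\Xip$ hence trivial integrability, and regularity with Bowen's formula handling the $q(\xi)=0$ branch). Your elaboration of these bookkeeping checks is sound and adds nothing contradictory to, and nothing missing from, the paper's argument.
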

    We now turn our attention to the system of equations in question, (\ref{eqn:Diffeq1}).
     \begin{proposition} \label{prop:Existence&Uniqueness}
    Suppose that $\Phi = \{\phi_e\}_{e\in\NN}$ is as in Theorem \ref{thm:NDThm1}. If $F$ is comparable to $\log\Phi'$, the system (\ref{eqn:Diffeq1}) has a unique real analytic solution $(t(\xi),q(\xi))\in D$ for $\xi\in(\xi_{\min},\infty)$. 
    \end{proposition}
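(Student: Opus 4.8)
The plan is to eliminate one variable at a time, reducing the $2\times 2$ system to two successive monotone one-dimensional problems, and then to recover global existence, uniqueness, and real-analyticity. Throughout I use that, under the comparability hypothesis and cofinite regularity, Lemma \ref{lem:Dopen} makes $D$ an open half-plane on which $P(t,q)$ is real-analytic, with $\frac{\partial P}{\partial t}<0$ and $\frac{\partial P}{\partial q}>0$ by Lemma \ref{lem:DerivativesOfPressure}. I would first record that $\frac{\partial^2 P}{\partial q^2}>0$ on $D$: since $F$ is comparable to $\log\Phi'$, we have $\tilde{f}=-\alpha\Xip+\tilde{h}$ with $\tilde{h}$ bounded and $\alpha>0$, so $\tilde{f}((k\tau)^{\infty})\to\infty$ along the alphabet exactly as in Lemma \ref{lem:ND-Cohom}, whence $\tilde{f}$ is not cohomologous to a constant and the asymptotic variance $\frac{\partial^2 P}{\partial q^2}=\sigma^2_{\tmu_{t,q}}(\tilde{f})$ is strictly positive. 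In particular $q\mapsto\frac{\partial P}{\partial q}(t,q)$ is strictly increasing and real-analytic.

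Step A (second equation). For each fixed $t\in[0,h]$ I would solve $\frac{\partial P}{\partial q}(t,q)=\xi$ for $q$. By the strict monotonicity just established, together with $\frac{\partial P}{\partial q}(t,q)\to\xi_{\min}$ as $q\to-\infty$ (Corollary \ref{cor:qderivativelimit}) and $\frac{\partial P}{\partial q}(t,q)\to\infty$ as $(t,q)\to\partial D$ (Lemma \ref{lem:PDBoundaryLimits}(2)), the map $q\mapsto\frac{\partial P}{\partial q}(t,q)$ is a real-analytic bijection from the admissible $q$-interval onto $(\xi_{\min},\infty)$. This yields a unique $q=Q(t,\xi)$, real-analytic in $(t,\xi)$ by the analytic implicit function theorem (applicable since $\frac{\partial^2 P}{\partial q^2}\neq0$).

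Step B (first equation). Substituting, set $\Psi(t,\xi):=P(t,Q(t,\xi))-\xi\,Q(t,\xi)$, so that a solution of the system corresponds to a zero of $\Psi(\cdot,\xi)$. Differentiating and using $\frac{\partial P}{\partial q}(t,Q)=\xi$ to cancel the terms carrying $\frac{\partial Q}{\partial t}$ gives $\frac{\partial\Psi}{\partial t}=\frac{\partial P}{\partial t}(t,Q(t,\xi))<0$, so $\Psi(\cdot,\xi)$ is strictly decreasing, which yields uniqueness at once. For existence I would evaluate at the upper endpoint $t=h$: writing $h_t(q)=P(t,q)-q\frac{\partial P}{\partial q}(t,q)$ one has $h_t'(q)=-q\frac{\partial^2 P}{\partial q^2}(t,q)$, so $h_t$ is maximized at $q=0$ with value $P(t,0)=P(t)$; since $\Phi$ is regular, $P(h)=0$, giving $\Psi(h,\xi)=h_h(Q(h,\xi))\le0$, with equality exactly when $\xi=\frac{\partial P}{\partial q}(h,0)$. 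It then remains to produce a value of $t$ at which $\Psi$ is strictly positive and invoke the intermediate value theorem; by monotonicity the resulting zero $t(\xi)$ is unique and, being a point where both equations hold, lies in $D$, so Proposition \ref{prop:SpectrumIsT} places it in $D_0$.

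The main obstacle is exactly this positivity at the lower end of the $t$-range. As $t$ decreases along the curve $q=Q(t,\xi)$, the point $(t,Q(t,\xi))$ cannot reach $\partial D$ (there $\frac{\partial P}{\partial q}\to\infty\neq\xi$), so one cannot simply invoke Lemma \ref{lem:PDBoundaryLimits}(1); instead I would control $Q(t,\xi)$ and $P(t,Q(t,\xi))$ via the comparability estimate $\Zt_1(s,q)\asymp\Zt_1(s-\alpha q,0)$ from the proof of Lemma \ref{lem:Dopen} together with cofinite regularity ($P(\theta)=\infty$), showing that $\Psi(t,\xi)\to+\infty$ as $t$ decreases to the lower edge $t_-(\xi)$ of its admissible range (where either $Q(t,\xi)\to-\infty$ with $\xi_{\min}(t_-)=\xi$, or the half-plane constraint forces $t-\alpha Q\to\theta^+$). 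Granting this, $\Psi(\cdot,\xi)$ runs continuously and strictly monotonically from $+\infty$ down through $0$, so the zero $t(\xi)$ exists and is unique for every $\xi\in(\xi_{\min},\infty)$. Finally, real-analyticity of $(t(\xi),q(\xi))$ follows from the analytic implicit function theorem applied to the full map $G(t,q,\xi)=\bigl(P(t,q)-q\xi,\ \frac{\partial P}{\partial q}(t,q)-\xi\bigr)$, whose Jacobian in $(t,q)$ at a solution is lower-triangular with determinant $\frac{\partial P}{\partial t}\cdot\frac{\partial^2 P}{\partial q^2}<0$, hence nonvanishing.
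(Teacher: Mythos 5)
Your reduction coincides with the paper's: solve $\frac{\partial P}{\partial q}(t,q)=\xi$ first (strict monotonicity from non-cohomology via Lemma \ref{lem:ND-Cohom}, the range of $\frac{\partial P}{\partial q}(t,\cdot)$ from Corollary \ref{cor:qderivativelimit} and Lemma \ref{lem:PDBoundaryLimits}) to get a real-analytic $q=Q(t,\xi)$, set $W(t,\xi)=P(t,Q(t,\xi))-\xi Q(t,\xi)$, use $\frac{\partial W}{\partial t}=\frac{\partial P}{\partial t}<0$ for uniqueness, check $W(h,\xi)\le 0$, and get analyticity of $(t(\xi),q(\xi))$ from the implicit function theorem with the triangular Jacobian. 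Your endpoint computation at $t=h$, via $h_t(q)=P(t,q)-q\frac{\partial P}{\partial q}(t,q)$ maximized at $q=0$ with value $P(h)=0$, is a cleaner equivalent of the paper's chord-slope convexity inequalities; up to this point the two arguments are essentially the same.

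The genuine gap is the existence half: you must produce some $t$ at which $W(t,\xi)>0$, and you explicitly defer this (``the main obstacle \dots Granting this \dots''). Your sketched route --- drive $t$ down to a lower edge $t_-(\xi)$ of the admissible range and show $W\to+\infty$ there --- is never carried out, and it is doubtful it can be with the tools you invoke: inside $0\le t\le h$ the curve $(t,Q(t,\xi))$ never meets $\partial D$ (as you yourself observe), so $t_-(\xi)$ would lie in $t<0$, where neither Corollary \ref{cor:qderivativelimit} nor the tightness results behind it (both established only for $0\le t\le h$) control $Q(t,\xi)$; moreover your case ``$Q(t,\xi)\to-\infty$ with $\xi_{\min}(t_-)=\xi$'' conflicts with Proposition \ref{MainResult3}, which makes $\xi_{\min}$ independent of $t$, so that alternative cannot be treated as a legitimate limiting regime. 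The paper instead closes existence at the concrete endpoint $t=0$, and this is precisely where comparability does its work: since $f_i\ge -\alpha\log|\phi_i'|+\gamma$ and $\||\phi_i'|\|_{X_{t(i)}}\to 0$, we have $\|\exp f_i\|_{X_{t(i)}}\to\infty$, so finiteness of $\Zt_1(0,q)=\sum_i \|\exp f_i\|^q_{X_{t(i)}}$ forces $Q(0,\xi)<0$, whence $-\xi Q(0,\xi)>0$; combining this with $P(0,Q(0,\xi))>0$ --- which the paper extracts from the blow-up of pressure at $\partial D$ (Lemma \ref{lem:PDBoundaryLimits}) along a sequence $\xi_n\to\infty$, and which can be supplemented for every $\xi$ by the variational-principle identity $W(0,\xi)=h(\tmu_{0,Q(0,\xi)})\ge 0$ used in Corollary \ref{cor:finitecase}, together with $\frac{\partial W}{\partial \xi}(0,\xi)=-Q(0,\xi)>0$ --- gives $W(0,\xi)>0$, and the intermediate value theorem on $[0,h]$ finishes. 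Without this (or some substitute) your argument establishes uniqueness and analyticity of a solution but not that one exists.
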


    \begin{proof}
        Let $t\in \RR$ be fixed. By Corollary \ref{cor:qderivativelimit} $\displaystyle\lim_{q\to-\infty}\frac{\partial P}{\partial q}(t,q) = \xi_{\min}$ and by Lemma \ref{lem:PDBoundaryLimits} $\displaystyle\lim_{q\to\infty}\frac{\partial P}{\partial q}(t,q) = \infty$. Note that $\frac{\partial P}{\partial q}(t,\cdot)>0$ is continuous and differentiable so for $\xi\in (\xi_{\min},\infty)$ there exists $q(t,\xi)\in (-\infty,q_0)$ given by the intermediate value theorem where $(t,q_0)\in \partial D$ such that 
        \begin{equation}
                \frac{\partial P}{\partial q}(t,q(t,\xi)) = \xi.
        \end{equation}
        This $q(t,\xi)$ is unique by monotonicity of $\frac{\partial P}{\partial q}$. Since $P(t,q)$ is analytic, the implicit $q(t,\xi)$ is also analytic with respect to $t$ and $\xi$. We set
        \begin{equation}
            W(t,\xi):= P(t,q(t,\xi))-\xi q(t,\xi).
        \end{equation}
        Then we have that 
        \begin{align}
            \frac{\partial W}{\partial t}(t,\xi) & = \frac{\partial P}{\partial t}(t,q(t,\xi))+\frac{\partial P}{\partial q}(t,q(t,\xi))\cdot\frac{\partial q}{\partial t}(t,\xi)-\xi\frac{\partial q}{\partial t}(t,\xi)\nonumber
            \\
            &= \frac{\partial P}{\partial t}(t,q(t,\xi))<0.
        \end{align}
        So $W(t,\xi)$ is strictly decreasing in $t$. Consider now a fixed $t$, then by definition of $q(t,\xi)$ we have that 
        \begin{equation} \label{eqn:ProblemChild1}
            \lim_{\xi\to\infty}\frac{\partial P}{\partial q}(t,q(t,\xi)) = \lim_{\xi\to\infty} \xi = \infty.
        \end{equation}
        Yet by the strict convexity of $P$, $\frac{\partial^2 P}{\partial q^2}>0$. So, there exists a sequence $\{\xi_n\}_{n\in\NN}\subset (\xi_{\min},\infty)$  with $\lim_{n\to\infty} \xi_n = \infty$ such that $\{q(t,\xi_n)\}_{n\in\NN}$ is a strictly increasing sequence bounded above by $q_0$. Hence, by Lemma \ref{lem:PDBoundaryLimits},
        \begin{equation}
            \lim_{n\to\infty} (t,q(t,\xi_n)) = (t,q_0)\in \partial D \implies  \lim_{n\to\infty} P(t,q(t,\xi_n)) = P(t,q_0) =\infty.
        \end{equation}
        Alternatively, if the above implication was false, then the limit of the sequence $\{(t,q_n)\}_{n\in\NN}$ would converge to some $(t,q')\in D$ with $q'<q_0$ and $\frac{\partial P}{\partial q}(t,q')= \infty$ which contradicts that $P(t,\cdot)$ is analytic on $D$. Thus, there must be an $n$ such that
        \begin{equation}
            P(t,q(t,\xi_n))>0.
        \end{equation}
        In particular, for $t=0$, there exists $\xi$ such that $P(0,q(0,\xi))>0$. $F$ is strictly positive so $\xi_{\min}>0$, so as $\xi\in (\xi_{\min},\infty)$, we have that $\xi>0$. Fix this $\xi$ and consider $W(t):= W(t,\xi)$, Then $W(t)$ is strictly decreasing. Note as well that since $(0,q(0,\xi))\in D$, we have that 
            \begin{align}\label{eqn:ProblemChild2}
            \Zt_1(0,q(0,\xi)) = \sum_{i=1}^{\infty}\|\exp(f_i)\|_{X_{t(i)}}^{q(0,\xi)}<\infty \implies q(0,\xi)<0,
        \end{align}
        and thus
        \begin{equation}\label{eqn:W0}
            W(0) = P(0,q(0,\xi))-\xi q(0,\xi)>0.
        \end{equation}
        Next, we consider the Bowen parameter $h$ of the CGDMS $\Phi$. By definition of the Bowen parameter $P(h,0)= P(h)\leq 0$. By strict convexity of $P(h,\cdot)$, if $q(h,\xi)>0$ is a solution to $P(h,q) = \xi q$,
        \begin{equation}\label{ineq:P<xiq1}
            \frac{P(h,q(h,\xi))-0}{q(h,\xi)-0} < \frac{\partial P}{\partial q}(h,q(h,\xi))=\xi \implies P(h,q(h,\xi)) < \xi q(h,\xi), \hspace{.5cm}
        \end{equation}
        and if $q(h,\xi)<0$ is a solution to $P(h,q(h,\xi)) = \xi q$
        \begin{equation}\label{ineq:P<xiq2}
            \frac{0-P(h,q(h,\xi))}{0-q(h,\xi)} > \frac{\partial P}{\partial q}(h,q(h,\xi))=\xi \implies P(h,q(h,\xi)) < \xi q(h,\xi). \hspace{.5cm}
        \end{equation}
        So, $q(h,\xi)\neq 0$ would not solve the equation $P(h,q) = \xi q$. Our only candidate then is $q(h,\xi) = 0$ and here,
        \begin{equation}
            W(h,\xi) = P(h,q(h,\xi)) = P(h,0) = P(h)= 0.
        \end{equation}
        Hence in all cases of the fixed $\xi$, $W(h)\leq 0$ where equality is achieved at $\xi=:\xi_0$. So by \eqref{eqn:W0}, the intermediate value theorem, and since $W'(t)< 0$, there exists a unique $t=t(\xi)\in (0,h]$ such that $W(t(\xi)) = 0$, i.e.
        \begin{equation}
            P(t(\xi),q(t(\xi),\xi)) = \xi q(t(\xi),\xi).
        \end{equation}
        Hence, $D_0\ni (t(\xi),q(\xi)):=(t(\xi),q(t(\xi),\xi))$ is a solution to the system of equations given. We now consider the map
        \begin{equation}
            F = \begin{bmatrix} F_1\\ F_2 \end{bmatrix} = \begin{bmatrix} P(t,q)-q\xi \\ \frac{\partial P}{\partial q}(t,q)-\xi \end{bmatrix}.
        \end{equation}
        Then 
        \begin{equation}
            \text{Jac}(F) =: \begin{bmatrix} \frac{\partial F_1}{\partial t} & \frac{\partial F_1}{\partial q}\\ \frac{\partial F_2}{\partial t} & \frac{\partial F_2}{\partial q} \end{bmatrix} = \begin{bmatrix} \frac{\partial P}{\partial t} & \frac{\partial P}{\partial q}-\xi\\ \frac{\partial^2 P}{\partial t\partial q} & \frac{\partial^2 P}{\partial q^2} \end{bmatrix}
        \end{equation}
        has determinant at $(t(\xi),q(\xi))$ given by
        \begin{align}
            \det(\text{Jac}(F))&\vert_{(t(\xi),q(\xi))}\nonumber \\&= \frac{\partial P}{\partial t}(t(\xi),q(\xi))\cdot  \frac{\partial^2 P}{\partial q^2}(t(\xi),q(\xi)) - \left(\frac{\partial P}{\partial q}(t(\xi),q(\xi))-\xi \right)\cdot \frac{\partial^2 P}{\partial t\partial q}(t(\xi),q(\xi))\nonumber\\
            &=\frac{\partial P}{\partial t}(t(\xi),q(\xi))\cdot  \frac{\partial^2 P}{\partial q^2}(t(\xi),q(\xi))<0.
        \end{align}
        Thus, by the implicit function theorem $t(\xi)$ and $q(\xi)$ are real analytic.
    \end{proof}

    \begin{corollary}\label{cor:boundedcase}
         Suppose that $\Phi = \{\phi_e\}_{e\in\NN}$ is as in Theorem \ref{thm:NDThm1}. If $F$ is bounded, then the system (\ref{eqn:Diffeq1}) has a unique real analytic solution $(t(\xi),q(\xi))\in D$ for $\xi\in(\xi_{\min},\xi_{\max})$.
    \end{corollary}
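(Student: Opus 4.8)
The plan is to run the proof of Proposition~\ref{prop:Existence&Uniqueness} essentially verbatim, changing only the two places where comparability was used and supplying a genuinely new argument for the lower anchor, which is where the bounded case differs. First I would set up the one-variable analysis. By Lemma~\ref{lem:FBoundedDopen} the region is the open half-plane $D=\{(t,q):t>\theta\}$, so for each fixed $t>\theta$ the map $q\mapsto\frac{\partial P}{\partial q}(t,q)=\int_{E_A^{\infty}}\tilde{f}\,d\tmu_{t,q}$ is defined on all of $\RR$, is continuous, and is strictly increasing (strict convexity in $q$ coming from Lemma~\ref{lem:ND-Cohom}). Corollary~\ref{cor:qderivativelimit} gives $\lim_{q\to-\infty}\frac{\partial P}{\partial q}(t,q)=\xi_{\min}$, and the maximizing analogue of Proposition~\ref{MainResult3} (valid since $\tilde{f}$ is now bounded, as in the remark following that proposition) gives $\lim_{q\to+\infty}\frac{\partial P}{\partial q}(t,q)=\xi_{\max}<\infty$. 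The intermediate value theorem then produces, for each $\xi\in(\xi_{\min},\xi_{\max})$, a unique $q(t,\xi)$ with $\frac{\partial P}{\partial q}(t,q(t,\xi))=\xi$, analytic in $(t,\xi)$. Setting $W(t,\xi):=P(t,q(t,\xi))-\xi q(t,\xi)$, the same cancellation as in Proposition~\ref{prop:Existence&Uniqueness} gives $\frac{\partial W}{\partial t}=\frac{\partial P}{\partial t}<0$, so $W(\cdot,\xi)$ is strictly decreasing; and the upper anchor is unchanged, since the tangent-line inequality for the convex function $P(h,\cdot)$ yields $W(h,\xi)\le P(h,0)=P(h)=0$, where $P(h)=0$ because cofinite regularity forces regularity.

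The one real obstacle is the lower anchor. In the comparable case one evaluates at $t=0$, but here $\theta\ge 0$ always holds in the infinite-alphabet setting (each $\||\phi_i'|\|\le s<1$, so $\sum_i\| |\phi_i'| \|^t$ diverges for $t\le 0$), hence $t=0$ need not lie in the projection of $D$ and that anchor is unavailable. Instead I would prove $\lim_{t\to\theta^+}W(t,\xi)=+\infty$ directly. Writing $W(t,\xi)=\inf_{q\in\RR}[\,P(t,q)-\xi q\,]$ (Legendre duality; the infimum is attained because $P(t,\cdot)$ is strictly convex), I would bound the integrand in two regimes. Fix invariant measures $\nu_-,\nu_+$ of finite entropy and finite Lyapunov exponent with $\int_{E_A^{\infty}}\tilde{f}\,d\nu_-<\xi<\int_{E_A^{\infty}}\tilde{f}\,d\nu_+$; these exist since $\xi$ is interior to $(\xi_{\min},\xi_{\max})$, and one may take Gibbs states $\tmu_{t_1,q_1}$ permitted by the integrability hypothesis of Theorem~\ref{thm:NDThm1}. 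The variational principle (Theorem~\ref{thm:VP}) gives, for every $q$, the affine lower bounds $P(t,q)-\xi q\ge h(\nu_\pm)-t\chi(\nu_\pm)+q\big(\int_{E_A^{\infty}}\tilde{f}\,d\nu_\pm-\xi\big)$, which grow to $+\infty$ as $q\to+\infty$ and $q\to-\infty$ respectively, while on a window $|q|\le Q$ the crude estimate $P(t,q)\ge P(t)+q\inf F$ (resp.\ $q\sup F$) gives $P(t,q)-\xi q\ge P(t)-C_0Q$. Choosing $Q\asymp P(t)$ and invoking cofinite regularity through $\lim_{t\to\theta^+}P(t)=+\infty$ (Lemma~\ref{lem:PDBoundaryLimits}), all three regime minima tend to $+\infty$, so $W(t,\xi)\to+\infty$.

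With both anchors in hand, strict monotonicity and continuity of $W(\cdot,\xi)$ on $(\theta,h]$ together with the intermediate value theorem yield a unique $t(\xi)\in(\theta,h]$ with $W(t(\xi),\xi)=0$, i.e.\ a unique solution $(t(\xi),q(\xi))=(t(\xi),q(t(\xi),\xi))\in D$ of the system; note $t(\xi)>\theta\ge 0$ forces the solution into $D_0$ as well. Real analyticity of $t(\xi)$ and $q(\xi)$ then follows exactly as in Proposition~\ref{prop:Existence&Uniqueness}: the Jacobian of the defining map $(t,q)\mapsto(P(t,q)-q\xi,\ \frac{\partial P}{\partial q}(t,q)-\xi)$ has determinant $\frac{\partial P}{\partial t}\cdot\frac{\partial^2 P}{\partial q^2}<0$ at the solution (positivity of $\frac{\partial^2 P}{\partial q^2}$ coming from Lemma~\ref{lem:ND-Cohom}), so the implicit function theorem applies. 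I expect the two-regime estimate for the lower anchor to be the step that requires care: it has no counterpart in the comparable case, it is exactly where cofinite regularity enters, and it replaces the now-unavailable evaluation at $t=0$.
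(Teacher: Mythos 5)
Your proposal is correct in its overall strategy, and it departs from the paper's proof at exactly the two places where the one-variable argument of Proposition \ref{prop:Existence&Uniqueness} meets the geometry of $D$; the departure at the lower anchor is a genuine improvement. The paper's proof of this corollary patches only (\ref{eqn:ProblemChild1}), via an openness/strict-convexity contradiction meant to force $(t,q(t,\xi))$ toward $\partial D$ as $\xi\to\xi_{\max}$, and then asserts that ``the rest of the proof is identical'' --- which silently includes the lower anchor at $t=0$, i.e.\ $W(0)>0$ obtained through (\ref{eqn:ProblemChild2}). As you observe, that anchor does not exist here: by Lemma \ref{lem:FBoundedDopen}, $D$ is the open half-plane $\{t>\theta\}$ with $\theta\geq 0$ for an infinite alphabet, so no point $(0,q)$ lies in $D$, and for bounded $F$ the series $\sum_i\|\exp f_i\|^q_{X_{t(i)}}$ diverges for every $q$, so (\ref{eqn:ProblemChild2}) has no analogue. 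Your replacement --- writing $W(t,\xi)=\inf_{q\in\RR}\left[P(t,q)-\xi q\right]$, bounding the infimum below by two affine variational-principle estimates anchored at $\nu_{\pm}$ outside a window $|q|\leq Q$ and by $P(t)-C_0Q$ inside it, then taking $Q\asymp P(t)$ and letting cofinite regularity drive $P(t)\to\infty$ as $t\to\theta^{+}$ --- is sound (all three regime minima diverge), and it is exactly the repair the bounded case needs; it also makes explicit where cofinite regularity enters. The upper anchor, the monotonicity of $W$ in $t$, and the implicit function theorem step coincide with the paper's. (One small point to add: $h>\theta$, which follows since $P(\theta)=\infty\neq 0=P(h)$, so your interval $(\theta,h]$ is nonempty.)

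The one step you should not wave through is $\lim_{q\to+\infty}\frac{\partial P}{\partial q}(t,q)=\xi_{\max}$, which you attribute to ``the maximizing analogue of Proposition \ref{MainResult3}, valid as in the remark following it.'' That remark still needs a weak* accumulation point, hence tightness, and Proposition \ref{MainResult2} is proved only for $q\to-\infty$; for $q\to+\infty$ tightness can genuinely fail for bounded $F$ (full shift over $\NN$ with $f_i=2-1/i$: the Gibbs states $\tmu_{t,q}$ push their mass to ever larger letters as $q\to+\infty$). So the analogue cannot be obtained by mirroring that argument. What you actually need is weaker and needs no tightness: by convexity, $\lim_{q\to+\infty}\frac{\partial P}{\partial q}(t,q)=\lim_{q\to+\infty}P(t,q)/q$, which by the variational principle (Theorem \ref{thm:VP}) equals $\sup_{\nu}\int_{E_A^{\infty}}\tilde{f}\,d\nu$ over invariant $\nu$ with finite entropy and Lyapunov exponent; identifying this supremum with $\xi_{\max}=\sup_{\om}\xi(\om)$ then requires a separate (standard) approximation --- close long orbit segments into periodic orbits using the finite-irreducibility words, with errors controlled by boundedness and H\"older continuity of $\tilde{f}$ --- and this also produces your measures $\nu_{+}$. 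This is a justificatory gap rather than a wrong approach: the statement is true and the fix is routine. For what it is worth, the paper's own proof needs the same fact and does not supply it either: its contradiction only shows that $\xi_{\max}$ is not attained by $\frac{\partial P}{\partial q}(t,\cdot)$, not that the range reaches up to $\xi_{\max}$, and its closing display, $\lim_{\xi\to\xi_{\max}}\frac{\partial P}{\partial q}(t,q(t,\xi))=\infty$, cannot hold as written since $\frac{\partial P}{\partial q}(t,q(t,\xi))=\xi\leq\xi_{\max}$. So on the top of the range your proof and the paper's are in the same boat; on the lower anchor, yours is the one that actually closes the argument.
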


    \begin{proof}
        The argument is nearly identical after replacing $\infty$ with $\xi_{\max}$ in the range of possible $F$-exponents, but we must address (\ref{eqn:ProblemChild1}). For a fixed $t$, if $(t,q(t,\xi_{\max}))\in D$, then by Lemma \ref{lem:FBoundedDopen}, $D$ is open and there exists $\varepsilon>0$ such that $(t,q(t,\xi_{\max})+\varepsilon)\in D$. Since $\frac{\partial^2 P}{\partial q^2}>0$ we have that
        \begin{equation}
            \xi_{\max}\geq\frac{\partial P}{\partial q}(t,q(t,\xi_{\max})+\varepsilon)> \frac{\partial P}{\partial q}(t,q(t,\xi_{\max})) = \xi_{\max},
        \end{equation}
        a contradiction. \hfill $\lightning$ \newline
        So, it must be the case that $(t,q(t,\xi_{\max}))\in \partial D$ and by applying applying Lemma \ref{lem:PDBoundaryLimits} 
        \begin{equation}
             \lim_{\xi\to \xi_{\max}}\frac{\partial P}{\partial q}(t,q(t,\xi)) = \lim_{(t,q)\to p\in \partial D}\frac{\partial P}{\partial q}(t,q(t,\xi)) = \infty. 
        \end{equation}
        The rest of the proof is identical.
    \end{proof}

    \begin{corollary}\label{cor:finitecase}
        If $\Phi$ is finite, irreducible, and satisfies the SOSC, then the system (\ref{eqn:Diffeq1}) has a unique real analytic solution $(t(\xi),q(\xi))\in D$ for $\xi\in(\xi_{\min},\xi_{\max})$.
    \end{corollary}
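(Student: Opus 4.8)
The plan is to treat this as the finite-alphabet specialization of Proposition \ref{prop:Existence&Uniqueness} and Corollary \ref{cor:boundedcase}, verifying that the standing hypotheses hold automatically and isolating the one structural change. First I would record the simplifications forced by $|E| < \infty$: irreducibility is equivalent to finite irreducibility, since there are only finitely many pairs $(e,f)$ and the connecting words may be collected into a finite set $I$; every H\"older family $F = \{f_e\}$ is automatically uniformly bounded, being a finite collection of continuous functions on the compact spaces $X_{t(e)}$; and the system is cofinitely regular with $\theta = -\infty$, whence $D = \RR^2$. By Lemma \ref{lem:SpectrumIsTranslationOfFamilyInvariant} we may assume $F$ strictly positive without loss of generality. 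This places us in the bounded regime of Corollary \ref{cor:boundedcase}, and the plan is to re-run that argument, flagging the only place where $D = \RR^2$ alters the reasoning.

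That place is the determination of the range of $\frac{\partial P}{\partial q}(t,\cdot)$. Since $D = \RR^2$ has empty boundary, the blow-up Lemma \ref{lem:PDBoundaryLimits} is no longer available to supply the upper end of the range, so I would replace it by the zero-temperature limits directly. If $\tilde{f}$ is cohomologous to a constant then $\xi_{\min} = \xi_{\max}$ and the interval $(\xi_{\min},\xi_{\max})$ is empty, making the statement vacuous; otherwise $\frac{\partial^2 P}{\partial q^2} > 0$, so for fixed $t$ the map $q \mapsto \frac{\partial P}{\partial q}(t,q) = \int_{E_A^{\infty}} \tilde{f}\,d\tmu_{t,q}$ is a strictly increasing continuous bijection from $\RR$ onto $(\xi_{\min},\xi_{\max})$. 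The left endpoint is Corollary \ref{cor:qderivativelimit}; the right endpoint $\lim_{q\to\infty} \int_{E_A^{\infty}}\tilde{f}\,d\tmu_{t,q} = \xi_{\max}$ follows from the symmetric, maximizing version of the zero-temperature argument, which in the finite case is underwritten by the weak-$*$ compactness of the space of $\sigma$-invariant probability measures and upper semicontinuity of entropy. This yields, for each $\xi \in (\xi_{\min},\xi_{\max})$, a unique $q(t,\xi)$ with $\frac{\partial P}{\partial q}(t,q(t,\xi)) = \xi$, analytic in $(t,\xi)$ by the implicit function theorem since $P$ is analytic on $\RR^2$.

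With $q(t,\xi)$ in hand the remainder is verbatim Proposition \ref{prop:Existence&Uniqueness}. Setting $W(t,\xi) := P(t,q(t,\xi)) - \xi q(t,\xi)$, one has $\frac{\partial W}{\partial t} = \frac{\partial P}{\partial t} < 0$, so $W$ strictly decreases in $t$. At $t = 0$ the variational principle (Theorem \ref{thm:VP}) gives $W(0,\xi) = h(\tmu_{0,q(0,\xi)}) > 0$, the entropy of the equilibrium state at the finite parameter $q(0,\xi)$, which is positive since a nondegenerate irreducible finite system has positive topological entropy; at $t = h$ the Bowen-parameter computation of Proposition \ref{prop:Existence&Uniqueness} gives $W(h,\xi) \leq 0$. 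The intermediate value theorem then produces a unique $t(\xi) \in (0,h]$ solving the system, and the Jacobian determinant equals $\frac{\partial P}{\partial t}\cdot\frac{\partial^2 P}{\partial q^2} < 0$ at the solution, so the implicit function theorem delivers real analyticity of $t(\xi)$ and $q(\xi)$, with $(t(\xi),q(\xi)) \in D_0$.

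The main obstacle is precisely the vanishing of $\partial D$: the arguments that located $\xi_{\max}$ by pushing to the Manhattan curve in the infinite-alphabet bounded case have no finite target here and must be rerouted through the maximizing zero-temperature limit. A related point to keep honest is that Lemma \ref{lem:ND-Cohom}, which supplies non-cohomologousness and hence $\frac{\partial^2 P}{\partial q^2} > 0$, is stated only for countably infinite systems; for finite alphabets I instead obtain the strict inequality for free by excluding the degenerate interval $\xi_{\min} = \xi_{\max}$ as vacuous. Everything else transfers without change, which is unsurprising since the compactness and semicontinuity properties that Section 3 labored to recover for countable alphabets are the classical ones for finite shifts of finite type.
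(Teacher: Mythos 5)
Your proposal is correct, and at the decisive step it coincides with the paper's own proof: the paper likewise observes that in the finite case one loses (\ref{eqn:ProblemChild2}) (since $D=\RR^2$, membership $(0,q(0,\xi))\in D$ no longer forces $q(0,\xi)<0$), and it repairs this exactly as you do, by invoking the variational principle (Theorem \ref{thm:VP}) to get $W(0,\xi)=P(0,q(0,\xi))-q(0,\xi)\xi=h_{\mu_{0,q(0,\xi)}}(\sigma)\geq 0$, after which $W(h,\xi)\leq 0$, strict monotonicity of $W$ in $t$, and the Jacobian computation run unchanged. Where you genuinely diverge is the supporting step that determines the range of $\frac{\partial P}{\partial q}(t,\cdot)$: the paper simply instructs the reader to ``make the changes required in Corollary \ref{cor:boundedcase}'', but that corollary's argument locates $q(t,\xi_{\max})$ on $\partial D$ and then applies Lemma \ref{lem:PDBoundaryLimits}, which is awkward here precisely because $\partial D=\emptyset$ when $D=\RR^2$; you instead reroute through the maximizing zero-temperature limit, $\lim_{q\to\infty}\int_{E_A^{\infty}}\tilde{f}\,d\tmu_{t,q}=\xi_{\max}$, using bounded entropy and compactness available for finite-alphabet SFTs. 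Your route is self-contained and arguably patches a soft spot in the paper's own citation chain. You also address something the paper passes over in silence: Lemma \ref{lem:ND-Cohom} is stated only for countably infinite alphabets, yet uniqueness of $q(t,\xi)$ and the inequalities (\ref{ineq:P<xiq1})--(\ref{ineq:P<xiq2}) need strict convexity in $q$; your disposal of the cohomologous-to-constant case as vacuous ($\xi_{\min}=\xi_{\max}$) is a legitimate and needed remark. (The paper's one addition you omit is the reference to \cite[Proposition 10]{ghenciuBowenFormulaShiftGenerated2015} for Bowen's formula in the finite setting, but that is only needed for regularity and the dimension statement, not for existence, uniqueness, and analyticity of the solution.)

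One imprecision to fix: you claim $h(\tmu_{0,q(0,\xi)})>0$ ``since a nondegenerate irreducible finite system has positive topological entropy.'' Positive topological entropy does not by itself give positive measure-theoretic entropy of a particular equilibrium state; that would require an extra argument (e.g., the Gibbs property together with full support, or Bernoullicity of Gibbs states on mixing SFTs). Fortunately only the weak inequality $h(\tmu_{0,q(0,\xi)})\geq 0$ is needed, and this is exactly what the paper uses: with $W(0,\xi)\geq 0$ and $W(h,\xi)\leq 0$ and $W$ strictly decreasing, the intermediate value theorem still produces a unique zero of $W$ in $[0,h]$, which is all the corollary asserts.
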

    \begin{proof}
        The proof of Proposition \ref{prop:Existence&Uniqueness} needs more care in this case. Even after making the changes required in Corollary \ref{cor:boundedcase} we do not have (\ref{eqn:ProblemChild2}). To address this we appeal to the variational principle, Theorem \ref{thm:VP}. Note that since $\mu_{0,q(0,\xi)}$ is an equilibrium state, the supremum is acheived and so we have that 
        \begin{align*}
            P(0,q(0,\xi)) &= h_{\mu_{0,q(0,\xi)}}(\sigma) +\int_{E_A^{\infty}}f_{0,q(0,\xi)}d\mu_{0,q(0,\xi)}\\
            &= h_{\mu_{0,q(0,\xi)}}(\sigma) + \int_{E_A^{\infty}} q(0,\xi)\cdot \tilde{f} d\mu_{0,q(0,\xi)}\\
            &= h_{\mu_{0,q(0,\xi)}}(\sigma) + q(0,\xi) \cdot \int_{E_A^{\infty}}  \tilde{f} d\mu_{0,q(0,\xi)}\\
            &= h_{\mu_{0,q(0,\xi)}}(\sigma) + q(0,\xi) \cdot \frac{\partial P}{\partial q}(0,q(0,\xi))\\
            &= h_{\mu_{0,q(0,\xi)}}(\sigma) + q(0,\xi) \cdot \xi\\
            &\implies  P(0,q(0,\xi))-q(0,\xi) \cdot \xi = h_{\mu_{0,q(0,\xi)}}(\sigma) \geq 0.
        \end{align*}
        Note that $W(t)$ is still decreasing in this case and that as $\Phi$ is finite, Bowen's formula holds by \cite[Proposition 10]{ghenciuBowenFormulaShiftGenerated2015} as well as (\ref{ineq:P<xiq1}) and (\ref{ineq:P<xiq2}). The proof is otherwise unaltered.  
    \end{proof}

    \begin{proof}[Proof of Theorem \ref{thm:NDThm1}]
        The proof follows immediately from Proposition \ref{prop:SpectrumIsT}, Proposition \ref{prop:Existence&Uniqueness}, Corollary \ref{cor:boundedcase}, and Corollary \ref{cor:finitecase}.
    \end{proof}

    \subsection{Shape of \textit{F}-Exponent Spectra}
    The main result of this section is to show that the $F$-spectrum for family of potentials $F$ uniformly bounded below has shape similar to that described in the analysis of the Khintchine exponent in \cite{fanKhintchineExponentsLyapunov2009}. In particular, it is neither strictly concave nor strictly convex for infinite alphabet systems. In the process, we will need to do some curve sketching for $q$ as well. We will reproduce the appropriate arguments from \cite{fanKhintchineExponentsLyapunov2009} with the slight alterations required and references to the comparable statements as needed. We begin by noting that \cite{fanKhintchineExponentsLyapunov2009} does much of the calculus for us since the derivations of the derivatives of $t$ and $q$ are derived from applying elementary calculus and the system of equations in (\ref{sys:MFSys}). For those interested in the finite case we recommend \cite{pesinMultifractalAnalysisBirkhoff2001}. 
    \begin{proposition}\cite[Propositions 4.15, 5.2]{fanKhintchineExponentsLyapunov2009} Let $t(\xi)$ be the spectrum function found via Theorem \ref{thm:NDThm1}. For $\xi\in(\xi_{\min},\infty)$ we have the following equalities: 
    \begin{align}\label{eqn:1stDerivativesOfT}
        t'(\xi) &= \frac{q(\xi)}{\frac{\partial P}{\partial t}(t(\xi),q(\xi))},
    \end{align}
    \begin{align}\label{eqn:2ndDerviativeOfT}
        t''(\xi) &= \frac{t'(\xi)\left(\frac{\partial^2 P}{\partial t^2}(t(\xi),q(\xi))\right)-q'(\xi)^2\left(\frac{\partial^2 P}{\partial q^2}(t(\xi),q(\xi))\right)}{-\frac{\partial P}{\partial t}(t(\xi),q(\xi))},
    \end{align}
    \begin{align}\label{eqn:1stDerivativeOfQ} 
        \text{ and }\hspace {1cm} q'(\xi) &= \frac{1-t'(\xi)\left(\frac{\partial^2 P}{\partial q\partial t}(t(\xi),q(\xi))\right)}{\frac{\partial^2 P}{\partial q^2}(t(\xi),q(\xi))}.
    \end{align}
    \end{proposition}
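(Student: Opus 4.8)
The plan is to treat the two equations of the system \eqref{sys:MFSys} as identities in $\xi$ and differentiate them implicitly. This is legitimate because Proposition \ref{prop:Existence&Uniqueness} guarantees that $(t(\xi),q(\xi))$ is a real-analytic solution on $(\xi_{\min},\infty)$, so all derivatives below exist and the chain rule applies. Throughout I evaluate the partial derivatives of $P$ at $(t(\xi),q(\xi))$ and suppress that argument.

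First I would establish \eqref{eqn:1stDerivativesOfT}. Differentiating the first equation $P(t(\xi),q(\xi)) = q(\xi)\xi$ by the chain rule gives $\frac{\partial P}{\partial t}t'(\xi) + \frac{\partial P}{\partial q}q'(\xi) = q'(\xi)\xi + q(\xi)$. Substituting the second equation $\frac{\partial P}{\partial q} = \xi$ cancels the term $\frac{\partial P}{\partial q}q'(\xi)$ against $q'(\xi)\xi$, leaving $\frac{\partial P}{\partial t}t'(\xi) = q(\xi)$, which is exactly \eqref{eqn:1stDerivativesOfT}; the quotient is well defined since $\frac{\partial P}{\partial t}<0$ by Lemma \ref{lem:DerivativesOfPressure}. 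For \eqref{eqn:1stDerivativeOfQ}, I would differentiate the second equation $\frac{\partial P}{\partial q}(t(\xi),q(\xi)) = \xi$ to obtain $\frac{\partial^2 P}{\partial q\partial t}t'(\xi) + \frac{\partial^2 P}{\partial q^2}q'(\xi) = 1$ and then solve for $q'(\xi)$; division by $\frac{\partial^2 P}{\partial q^2}$ is justified because $\frac{\partial^2 P}{\partial q^2}>0$, which follows from the strict convexity of $P$, itself a consequence of the non-cohomology result Lemma \ref{lem:ND-Cohom}.

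For the second-order formula \eqref{eqn:2ndDerviativeOfT}, I would differentiate the identity $\frac{\partial P}{\partial t}\,t'(\xi) = q(\xi)$ a second time. The product and chain rules yield $\bigl(\frac{\partial^2 P}{\partial t^2}t'(\xi) + \frac{\partial^2 P}{\partial t\partial q}q'(\xi)\bigr)t'(\xi) + \frac{\partial P}{\partial t}t''(\xi) = q'(\xi)$. The key simplifying step is to feed in the first-order relation for $q'$ just obtained, in the rearranged form $\frac{\partial^2 P}{\partial q\partial t}t'(\xi) = 1 - \frac{\partial^2 P}{\partial q^2}q'(\xi)$, so as to eliminate the mixed-partial term $\frac{\partial^2 P}{\partial t\partial q}q'(\xi)t'(\xi)$. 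After this substitution the isolated $q'(\xi)$ on the right-hand side cancels, and solving for $t''(\xi)$ and regrouping produces \eqref{eqn:2ndDerviativeOfT}.

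The entire argument is elementary calculus, so there is no genuine analytic obstacle; the only care required is the bookkeeping in the $t''$ computation needed to reach the stated compact form, together with verifying that the denominators $\frac{\partial P}{\partial t}$ and $\frac{\partial^2 P}{\partial q^2}$ are nonzero throughout $D_0$. Both facts are already supplied: the sign $\frac{\partial P}{\partial t}<0$ comes from Lemma \ref{lem:DerivativesOfPressure}, and the strict positivity $\frac{\partial^2 P}{\partial q^2}>0$ from the strict convexity guaranteed by Lemma \ref{lem:ND-Cohom}.
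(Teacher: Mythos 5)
Your overall strategy---implicit differentiation of the system \eqref{sys:MFSys}, with analyticity of $(t(\xi),q(\xi))$ from Proposition \ref{prop:Existence&Uniqueness} justifying the chain rule---is exactly the intended route: the paper itself gives no independent proof but defers to \cite{fanKhintchineExponentsLyapunov2009}, remarking that the formulas follow from elementary calculus applied to the system. Your derivations of \eqref{eqn:1stDerivativesOfT} and \eqref{eqn:1stDerivativeOfQ} are correct, and your justification of the nonvanishing denominators ($\frac{\partial P}{\partial t}<0$ from Lemma \ref{lem:DerivativesOfPressure}, $\frac{\partial^2 P}{\partial q^2}>0$ from strict convexity) matches the paper's own usage.

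However, the last step of your $t''$ computation does not land on \eqref{eqn:2ndDerviativeOfT} as printed. Carry out your own substitution to the end: differentiating $\frac{\partial P}{\partial t}\,t' = q$ gives
\[
\frac{\partial^2 P}{\partial t^2}(t')^2 + \frac{\partial^2 P}{\partial t\partial q}\,t'q' + \frac{\partial P}{\partial t}\,t'' = q',
\]
and inserting $\frac{\partial^2 P}{\partial q\partial t}\,t' = 1-\frac{\partial^2 P}{\partial q^2}\,q'$ turns the middle term into $q'-\frac{\partial^2 P}{\partial q^2}(q')^2$, so the isolated $q'$ cancels and one is left with
\[
t''(\xi) = \frac{\left(t'(\xi)\right)^2\frac{\partial^2 P}{\partial t^2}(t(\xi),q(\xi)) - \left(q'(\xi)\right)^2\frac{\partial^2 P}{\partial q^2}(t(\xi),q(\xi))}{-\frac{\partial P}{\partial t}(t(\xi),q(\xi))}.
\]
The numerator carries $(t'(\xi))^2$, not $t'(\xi)$ as in \eqref{eqn:2ndDerviativeOfT}; the two coincide only where $t'(\xi)\in\{0,1\}$, in particular at $\xi_0$ where $t'(\xi_0)=0$, which is the one place the paper later invokes the formula. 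That the printed expression cannot be the outcome of this computation is easy to test: for the smooth convex function $P(t,q)=e^{q-t}$ (which has all the sign properties used), the system \eqref{sys:MFSys} solves to $q(\xi)\equiv 1$, $t(\xi)=1-\log\xi$, so $t''(\xi)=1/\xi^2$, which agrees with the $(t')^2$ version but not with the printed formula, which would give $-1/\xi$. So your claim that ``regrouping produces \eqref{eqn:2ndDerviativeOfT}'' is an algebra-level gap: either you made a slip you did not display, or (more plausibly) the statement as transcribed from \cite{fanKhintchineExponentsLyapunov2009} contains a typo and your computation in fact proves the corrected identity. A complete proof must either reach the printed right-hand side exactly (impossible in general) or explicitly flag that the correct formula replaces $t'(\xi)$ by $(t'(\xi))^2$ in the first term of the numerator.
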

    The form of the $t$ derivatives above and the properties of the derivatives of $P$ tell us that we need to understand $\sgn(q(\xi))$ in order to discern $\sgn(t'(\xi))$, and then, combining some limiting properties of $t$ and $q$, conclude on $\sgn(t''(\xi))$. This description of signs will wrap up the argument of the main result in this section.
    \begin{proposition}\label{prop:sgnOfq} [c.f. \cite[Proposition 4.14]{fanKhintchineExponentsLyapunov2009}]
    For $\xi\in (\xi_{\min},\infty)$ and $t(\xi_0) = h$ where $h$ is the Bowen parameter of a finitely irreducible and cofinitely regular CGDMS $\Phi$ satisfying the SOSC, we have that 
    \begin{align*}
        &q(\xi) <0  \hspace{1cm}\text{ for } \xi< \xi_0,\\
        &q(\xi_0) = 0, \\
        &q(\xi) >0  \hspace{1cm}\text{ for } \xi>\xi_0.
    \end{align*}
    \end{proposition}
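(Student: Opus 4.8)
The plan is to reduce the sign of $q(\xi)$ to the sign of the spectrum's derivative $t'(\xi)$ and then exploit that $\xi_0$ is the unique global maximizer of $t$. From the derivative formula (\ref{eqn:1stDerivativesOfT}) we have $t'(\xi) = q(\xi)/\frac{\partial P}{\partial t}(t(\xi),q(\xi))$, and since $\frac{\partial P}{\partial t}<0$ by Lemma \ref{lem:DerivativesOfPressure}, it follows that $\sgn(t'(\xi)) = -\sgn(q(\xi))$. Thus it suffices to establish $t'(\xi)>0$ for $\xi<\xi_0$, that $t'(\xi_0)=0$, and $t'(\xi)<0$ for $\xi>\xi_0$.

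First I would locate the zeros of $q$. If $q(\xi)=0$ for some $\xi$, then the first equation of the system (\ref{sys:MFSys}) reads $P(t(\xi),0)=P(t(\xi))=0$; by regularity of $\Phi$ (cofinite regularity implies regularity) and Bowen's Formula, $h$ is the unique zero of the Lyapunov pressure, so $t(\xi)=h$. By Proposition \ref{prop:Existence&Uniqueness} the maximal value $t(\xi)=h$ is attained only at $\xi=\xi_0$, so $q$ can vanish only at $\xi_0$. Conversely, since $t(\xi_0)=h$, the analysis in Proposition \ref{prop:Existence&Uniqueness} forces $q(\xi_0)=q(h,\xi_0)=0$. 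This already yields the middle equality $q(\xi_0)=0$.

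Next I would use real analyticity together with the maximality of $\xi_0$. Because $q$, and hence $t'$, is real analytic on $(\xi_{\min},\infty)$ and vanishes only at $\xi_0$, the intermediate value theorem forbids a sign change on either of the intervals $(\xi_{\min},\xi_0)$ and $(\xi_0,\infty)$, so $t'$ has constant sign on each. To pin down those signs I would invoke $t(\xi)=\HD(J(\xi))\le \HD(J)=h=t(\xi_0)$ from Proposition \ref{prop:SpectrumIsT} and Bowen's Formula, with equality only at $\xi_0$; thus $\xi_0$ is a strict global maximum of $t$. If $t'$ were negative on $(\xi_{\min},\xi_0)$, then $t$ would be strictly decreasing there, forcing $t(\xi)>t(\xi_0)=h$ for $\xi<\xi_0$, contradicting maximality; hence $t'>0$ on $(\xi_{\min},\xi_0)$, i.e. $q(\xi)<0$. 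Symmetrically, $t'>0$ on $(\xi_0,\infty)$ would give $t(\xi)>h$ for $\xi>\xi_0$, again impossible, so $t'<0$ there and $q(\xi)>0$.

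The main obstacle is not the sign bookkeeping but ensuring that the two structural inputs are genuinely in force: that $\xi_0$ is an interior, strict global maximizer attained exactly once (so that the equation $q=0$ isolates $\xi_0$), and that $\frac{\partial P}{\partial t}<0$ holds throughout $D_0$ so the sign transfer between $t'$ and $q$ never degenerates. Both are delivered by Lemma \ref{lem:DerivativesOfPressure} and Proposition \ref{prop:Existence&Uniqueness}; once they are available, the conclusion is a short continuity-and-monotonicity deduction.
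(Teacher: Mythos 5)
Your proof is correct, but it takes a genuinely different route from the paper's. The paper argues entirely at the level of the pressure function: after identifying $q(\xi_0)=0$ exactly as you do (via regularity and Bowen's formula), it uses convexity of $q\mapsto P(h,q)$ to get $P(h,q)\geq \xi_0 q$ for all $q$, and then, for $\xi\neq\xi_0$ with the ``wrong'' sign of $q(\xi)$, combines this with strict monotonicity of $P$ in $t$ (so $P(t(\xi),q(\xi))>P(h,q(\xi))$ when $t(\xi)<h$) to contradict the first equation $P(t(\xi),q(\xi))=q(\xi)\xi$ of the system (\ref{sys:MFSys}). You instead transfer the question to the sign of $t'$ via the implicit-differentiation formula (\ref{eqn:1stDerivativesOfT}), isolate the unique zero of $q$ at $\xi_0$, and pin down the signs using continuity and the fact that $\xi_0$ is a strict global maximizer of $t$. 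This reverses the paper's logical order: there, the sign of $t'$ (Corollary \ref{prop:sgnOft'}) is a \emph{consequence} of this proposition, whereas you derive the sign of $t'$ first; that is not circular, since everything you invoke (the derivative formulas, Lemma \ref{lem:DerivativesOfPressure}, Propositions \ref{prop:SpectrumIsT} and \ref{prop:Existence&Uniqueness}) precedes the proposition. Two small remarks. First, your appeal to Proposition \ref{prop:Existence&Uniqueness} for ``$t(\xi)=h$ is attained only at $\xi=\xi_0$'' cites its statement for something only its proof delivers: the statement gives uniqueness of $(t(\xi),q(\xi))$ for each fixed $\xi$, not injectivity of $\xi\mapsto t(\xi)$ at the value $h$; the needed fact follows from the strict-convexity inequalities (\ref{ineq:P<xiq1})--(\ref{ineq:P<xiq2}) inside that proof, which force $q=0$ whenever $t=h$, and then the second equation of the system gives $\xi=\frac{\partial P}{\partial q}(h,0)=\xi_0$. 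Second, you do not actually need the dimension identification $t(\xi)=\HD(J(\xi))\leq\HD(J)$ to get the bound $t(\xi)\leq h$: Proposition \ref{prop:Existence&Uniqueness} already places $t(\xi)\in(0,h]$, which makes your argument lighter. The trade-off overall: the paper's proof is more elementary and self-contained (convexity plus monotonicity of pressure only), while yours gives a cleaner ``curve-sketching'' picture of the spectrum at the cost of heavier upstream machinery.
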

    \begin{proof} Note that the system
        \begin{align}
        \begin{cases}P(t,0) = 0 \\ \frac{\partial P}{\partial q}(t,0) = \xi \end{cases}
        \end{align}
        has a unique solution which must occur at the Bowen parameter $h$ by regularity of $\Phi$. And so, the equation $q(\xi_0) = 0$ holds. Next, from the convexity of $P$, we have that for all $q\geq 0$
        \begin{equation}
            \frac{P(h,q)}{q}\geq \frac{\partial P}{\partial q}(h,0) = \xi_0 \implies P(h,q)\geq \xi_0 q,
        \end{equation}
        and for all $q\leq 0$
        \begin{equation}
            \frac{P(h,q)}{q}\leq \frac{\partial P}{\partial q}(h,0) = \xi_0 \implies P(h,q)\geq \xi_0 q.
        \end{equation}
        Suppose now that $t(\xi)\in (0,h)$ then for $\xi>\xi_0$, if $q(\xi)\leq 0$, since $\frac{\partial P}{\partial t}<0$, we have that 
        \begin{equation}
            P(t(\xi),q(\xi))> P(h,q(\xi))\geq q(\xi)\xi_0> q(\xi)\xi. \qquad \lightning
        \end{equation}
        So it must be the case that $q(\xi)>0$ when $\xi>\xi_0$. \newline 
        Similarly for $\xi<\xi_0$, if $q(\xi)\geq 0$ we have that 
        \begin{equation}
            P(t(\xi),q(\xi))> P(h,q(\xi))\geq q(\xi)\xi_0 > q(\xi)\xi. \qquad \lightning
        \end{equation}
        So it must be the case that $q(\xi)<0$ whenever $\xi<\xi_0$, as desired.
    \end{proof}
    
    Combining Proposition \ref{prop:sgnOfq} with Lemma \ref{lem:DerivativesOfPressure} immediately yields the following
    \begin{corollary}\label{prop:sgnOft'} [c.f. \cite[Proposition 4.16]{fanKhintchineExponentsLyapunov2009}]
        For $\xi\in (\xi_{\min},\infty)$ and $t(\xi_0) = h$ where $h$ is the Bowen parameter of a finitely irreducible and cofinitely regular CGDMS $\Phi$ satisfying the SOSC,  we have that 
    \begin{align*}
        &t'(\xi) > 0  \hspace{1cm}\text{ for } \xi< \xi_0,\\
        &t'(\xi_0) = 0, \\
        &t'(\xi) <0  \hspace{1cm}\text{ for } \xi>\xi_0,
    \end{align*}
    and this holds for finite, irreducible $\Phi$ satisfying the SOSC by replacing $\infty$ with $\xi_{\max}$.
    \end{corollary}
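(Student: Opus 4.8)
The plan is to read the signs off directly from the closed form for $t'(\xi)$ recorded in (\ref{eqn:1stDerivativesOfT}), namely
\[
    t'(\xi) = \frac{q(\xi)}{\frac{\partial P}{\partial t}(t(\xi),q(\xi))},
\]
so that the sign of $t'(\xi)$ is governed entirely by the sign of the numerator $q(\xi)$ together with the sign of the denominator $\frac{\partial P}{\partial t}(t(\xi),q(\xi))$.

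First I would pin down the sign of the denominator. Since $(t(\xi),q(\xi))\in D$ by Proposition \ref{prop:Existence&Uniqueness} (or Corollary \ref{cor:boundedcase}/\ref{cor:finitecase} in the bounded or finite cases), and since the standing integrability hypothesis of Theorem \ref{thm:NDThm1} holds, Lemma \ref{lem:DerivativesOfPressure} applies at the solution point and gives
\[
    \frac{\partial P}{\partial t}(t(\xi),q(\xi)) = \int_{E_A^{\infty}}\Xip\;d\tmu_{t(\xi),q(\xi)} < 0.
\]
Thus the denominator is strictly negative for every $\xi\in(\xi_{\min},\infty)$, and the sign of $t'(\xi)$ is simply the opposite of the sign of $q(\xi)$.

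Next I would insert the trichotomy for $q(\xi)$ established in Proposition \ref{prop:sgnOfq}. For $\xi<\xi_0$ we have $q(\xi)<0$, so $t'(\xi)$ is a negative quantity divided by a negative quantity and hence is positive; at $\xi=\xi_0$ we have $q(\xi_0)=0$, so $t'(\xi_0)=0$; and for $\xi>\xi_0$ we have $q(\xi)>0$, so $t'(\xi)$ is a positive quantity divided by a negative quantity and hence is negative. This yields the three claimed inequalities directly.

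Finally, for the finite alphabet case I would observe that (\ref{eqn:1stDerivativesOfT}), the negativity of $\frac{\partial P}{\partial t}$, and Proposition \ref{prop:sgnOfq} all remain valid after replacing $\infty$ by $\xi_{\max}$ throughout (the sign argument of Proposition \ref{prop:sgnOfq} uses only convexity of $P$ and Bowen's formula, which holds for finite irreducible systems), so the identical sign analysis delivers the result on $(\xi_{\min},\xi_{\max})$. There is no genuine obstacle here: the substance is already carried by Proposition \ref{prop:sgnOfq} and Lemma \ref{lem:DerivativesOfPressure}, and the only point requiring a word of justification is that the hypotheses of Lemma \ref{lem:DerivativesOfPressure} are met along the solution curve $(t(\xi),q(\xi))$, which is exactly the integrability assumption built into Theorem \ref{thm:NDThm1}.
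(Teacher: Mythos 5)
Your proposal is correct and matches the paper's own reasoning exactly: the paper treats this corollary as an immediate consequence of combining Proposition \ref{prop:sgnOfq} (the sign trichotomy for $q(\xi)$) with Lemma \ref{lem:DerivativesOfPressure} (negativity of $\frac{\partial P}{\partial t}$), read through the formula $t'(\xi) = q(\xi)\big/\frac{\partial P}{\partial t}(t(\xi),q(\xi))$ in (\ref{eqn:1stDerivativesOfT}). Your write-up simply makes explicit the sign bookkeeping and the verification that the hypotheses of Lemma \ref{lem:DerivativesOfPressure} hold along the solution curve, which the paper leaves implicit.
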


    We turn our attention to the limiting properties of $t$ and $q$. 
   \begin{proposition}\label{prop:limofT} \textup{(ND, c.f. \cite[Proposition 4.16]{fanKhintchineExponentsLyapunov2009})}
        Let $\theta$ be the finiteness parameter of a finitely irreducible, cofinitely regular CGDMS $\Phi$ satisfying the SOSC, and suppose that $F$ and $\log\Phi'$ satisfy the comparability condition as in Lemma \ref{lem:Dopen}, then we have the following limiting behavior of $t$:
        \begin{align}
        \lim_{\xi\to\xi_{\min}}t(\xi)= \inf_{\xi_{\min}< \xi\leq \xi_0}\HD(J(\xi))  \hspace{1cm} \text{and} \hspace{1cm} \lim_{\xi\to\infty}t(\xi)=\theta
        \end{align}
        when the limits are defined.
    \end{proposition}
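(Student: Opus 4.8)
The plan is to handle the two limits separately: the left-hand limit at $\xi_{\min}$ is essentially a monotonicity statement, while the limit at infinity is where the genuine work lies. Throughout I would use $t(\xi)=\HD(J(\xi))$ from Theorem~\ref{thm:NDThm1} and the sign data for $t'$ from Corollary~\ref{prop:sgnOft'}. For the limit at $\xi_{\min}$, recall that $t$ is strictly increasing on $(\xi_{\min},\xi_0)$ by Corollary~\ref{prop:sgnOft'}; a bounded monotone function has a one-sided limit at the endpoint equal to its infimum, so
$$\lim_{\xi\to\xi_{\min}^+} t(\xi) = \inf_{\xi_{\min}<\xi\le\xi_0} t(\xi) = \inf_{\xi_{\min}<\xi\le\xi_0}\HD(J(\xi)),$$
which is the asserted identity.

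For $\lim_{\xi\to\infty} t(\xi)=\theta$, I first note that by Corollary~\ref{prop:sgnOft'} the spectrum $t$ is strictly decreasing on $(\xi_0,\infty)$ and bounded below by $0$, so $t_\infty:=\lim_{\xi\to\infty}t(\xi)$ exists and only its value must be pinned down. To get $t_\infty\ge\theta$, I would invoke the description of $D$ obtained in the proof of Lemma~\ref{lem:Dopen}: under the comparability hypothesis $D=\{(t,q):t-\alpha q>\theta\}$, with boundary line $t-\alpha q=\theta$. For $\xi>\xi_0$ Proposition~\ref{prop:sgnOfq} gives $q(\xi)>0$, and since $(t(\xi),q(\xi))\in D$ we obtain $t(\xi)>\theta+\alpha q(\xi)>\theta$; letting $\xi\to\infty$ yields $t_\infty\ge\theta$.

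For the reverse inequality I would fix any $t^*>\theta$ and use the auxiliary function $W(t^*,\xi)=P(t^*,q(t^*,\xi))-\xi\,q(t^*,\xi)$ from Proposition~\ref{prop:Existence&Uniqueness}, where $q(t^*,\xi)$ is determined by $\partial_q P(t^*,q(t^*,\xi))=\xi$; recall $W$ is strictly decreasing in its first argument (since $\partial_t W=\partial_t P<0$) and $W(t(\xi),\xi)=0$. Because $t^*>\theta$, the comparability description of $D$ gives a vertical boundary abscissa $q_0(t^*)=(t^*-\theta)/\alpha>0$, so I may fix $q_1\in(0,q_0(t^*))$ with $P(t^*,q_1)<\infty$. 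Setting $g(q):=P(t^*,q)$, which is convex in $q$ (indeed strictly, by Lemma~\ref{lem:ND-Cohom}), the supporting-line inequality $g(q_1)\ge g(q)+g'(q)(q_1-q)$ evaluated at $q=q(t^*,\xi)$, together with $g'(q(t^*,\xi))=\xi$, gives
$$W(t^*,\xi)=g(q)-q\,g'(q)\le g(q_1)-q_1 g'(q)=P(t^*,q_1)-q_1\xi\xrightarrow[\xi\to\infty]{}-\infty.$$
Hence for all large $\xi$ we have $W(t^*,\xi)<0=W(t(\xi),\xi)$, and monotonicity of $W$ in $t$ forces $t(\xi)<t^*$; thus $t_\infty\le t^*$. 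Letting $t^*\downarrow\theta$ gives $t_\infty\le\theta$, and combined with the lower bound, $t_\infty=\theta$.

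The main obstacle is the upper bound $t_\infty\le\theta$, and it is precisely here that the comparability hypothesis is indispensable: it both locates the positive abscissa $q_0(t^*)>0$ (guaranteeing a fixed strictly positive $q_1$ against which to test the supporting line) and, through Lemma~\ref{lem:PDBoundaryLimits}, underlies the boundary blow-up that makes the construction coherent. The convexity estimate is what converts the divergence $\xi\to\infty$ into the divergence $-W\to\infty$, and the delicate point to verify is exactly that $q_1$ may be taken strictly positive; this is possible if and only if $t^*>\theta$, which is the structural reason the limit lands precisely at $\theta$ rather than any larger value.
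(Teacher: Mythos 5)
Your proof is correct, and while your treatment of $\lim_{\xi\to\xi_{\min}}t(\xi)$ coincides with the paper's (monotonicity of $t$ on $(\xi_{\min},\xi_0)$ from Corollary \ref{prop:sgnOft'} together with $t(\xi)=\HD(J(\xi))$), your argument for $\lim_{\xi\to\infty}t(\xi)=\theta$ is genuinely different. The paper works with the decreasing inverse branch $\xi_2(t)$ and bounds it from below, writing $\xi_2(t)=\frac{\partial P}{\partial q}(t,q(t))\geq\frac{\partial P}{\partial q}(t,0)$, where the right-hand side diverges as $t\to\theta^{+}$ by cofinite regularity combined with comparability (the characteristic exponent of $\tmu_{t,0}$ blows up as $t\downarrow\theta$). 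You instead sandwich the limit $t_\infty$: the lower bound $t_\infty\geq\theta$ comes from $q(\xi)>0$ (Proposition \ref{prop:sgnOfq}) together with the explicit description of $D$ as $\{-\alpha q+t>\theta\}$ established in the proof of Lemma \ref{lem:Dopen}, and the upper bound comes from the supporting-line (Legendre-transform) estimate $W(t^{*},\xi)\leq P(t^{*},q_1)-q_1\xi\to-\infty$ for a fixed $q_1\in(0,q_0(t^{*}))$, combined with strict monotonicity of $W$ in $t$. Your route buys something real: the paper's inequality shows that $\xi_2$ diverges as $t$ approaches $\theta$ \emph{within its domain}, but by itself it does not rule out that the range of $t(\cdot)$ on $(\xi_0,\infty)$ stays bounded away from $\theta$ (i.e., that the domain of $\xi_2$ fails to reach down to $\theta$); your estimate $W(t^{*},\xi)\to-\infty$ is precisely the statement that every $t^{*}>\theta$ is eventually undercut, so your argument is self-contained where the paper's is terse. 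Conversely, the paper's computation exhibits the dynamical mechanism directly, namely the divergence of $\int_{E_A^{\infty}}\tilde{f}\,d\tmu_{t,0}$ as $t\downarrow\theta$. Two minor points: plain convexity of $q\mapsto P(t^{*},q)$ suffices for your supporting line, so the appeal to strict convexity via Lemma \ref{lem:ND-Cohom} is unnecessary; and the existence of $q(t^{*},\xi)$ for all large $\xi$ rests on the boundary blow-up of Lemma \ref{lem:PDBoundaryLimits}, which you correctly flag as an ingredient.
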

    \begin{proof}
        By Corollary \ref{prop:sgnOft'} and Proposition \ref{prop:Existence&Uniqueness} we obtain two analytic inverse funcitons $\xi_1(t)$ on $(\xi_{\min},\xi_0)$ which is increasing and $\xi_2(t)$ on $(\xi_0,\infty)$ which is decreasing. Since $\xi_1(t)$ is increasing and continuous, by application of Theorem \ref{thm:NDThm1} we have that 
        \[
        \lim_{\xi\to\xi_{\min}}t(\xi)= \inf_{\xi_{\min}< \xi\leq \xi_0}HD(J(\xi))
        \]
        Note that by (\ref{sys:MFSys}), we may consider the system as functions of $t$ instead of $\xi$.
        Now for $\xi_2(t)$, by the cofinite regularity of $\Phi$,
        \begin{equation}
            \xi_2(t)= \frac{P(t,q(t))}{q(t)} = \frac{\partial P}{\partial q}(t,q(t)) \geq \frac{\partial P}{\partial q}(t,0) \to \infty \text{ as } t\to \theta
        \end{equation}
        where the limiting behavior of $\frac{\partial P}{\partial q}$ follows from comparability of $F$ and $\log\Phi'$. Thus, $\lim_{\xi\to\infty} t(\xi) = \theta$ as desired.
    \end{proof}
    \begin{remark}\label{rmk:NonSepShapes}
        If instead we suppose that we have $I$, a witness to the finite irreducibility of $\Phi$ such that $f_i(X_{t(i)})\cap f_j(X_{t(j)}) = \emptyset$ for all $i,j\in E(I) := \{e\in E \;|\; e \text{ occurs in some } w\in I\}$ and that the $\sup_{i\in E(I)}(f_i(X_{t(i)}))<\inf (g_j(X_{t(j)})$ for all $g_j\in F\setminus \{f_i\in F\;|\; i\in E(I)\}$. Then $\mu_{\infty}$ will be supported on some finite alphabet irreducible subshift of $E_A^{\infty}$ by \cite[Theorem 2]{bissacotExistenceMaximizingMeasures2014}. The separation condition on the images of the functions in $F$ gives us that $\mu_{\infty}$ is supported on periodic orbits contained in that finite alphabet subshift. By comparability, there can only be finitely many such periodic points that support $\mu_{\infty}$. So, $HD(J(\xi_{\min}))=0$, and thus, $0$ is the left endpoint of the open interval defining the domain of $\xi_1(t)$.  Note that by (\ref{sys:MFSys}), considering the system as functions of $t$ instead of $\xi$, we have that
        \begin{equation}
            \xi_1(t)= \frac{P(t,q(t))}{q(t)} = \frac{\partial P}{\partial q}(t,q(t)).
        \end{equation}
        Since, from Proposition \ref{prop:sgnOfq}, $q(t)<0$, it must be the case that $P(t,q(t))<0$, and by (\ref{eqn:BoundaryDPressure}) there must exist a $q_0(t)>q(t)$ such that $P(t,q_0(t)) = 0$. Thus,
        \begin{align}
            \xi_1(t)= \frac{\partial P}{\partial q}(t,q(t))<\frac{\partial P}{\partial q}(t,q_0(t)).
        \end{align}
        Now by applying (\ref{ineq:ZSqueezeP}) and noticing that 
        \begin{equation}
            Z_1(0,q) = \sum_{i=1}^{\infty}\|\exp f_i\|^q_{X_{t(i)}},
        \end{equation}
        along with the strict positivity of $F$ and comparability to $\log\Phi'$, we have that $\lim_{i\to\infty} \|\exp f_i\|_{X_{t(i)}} = \infty $. So $Z_1(0,q)$ is strictly positive for any finite $q$. Thus, it must be the case that 
        \begin{equation}\label{eqn:q0lim}
        \displaystyle \lim_{t\to 0} q_0(t) = -\infty.
        \end{equation}
        So, by analyticity of pressure and from Remark \ref{rmk:0qSequenceIsfMinimizing}
        \begin{equation}
            \lim_{t\to 0}\frac{\partial P}{\partial q}(t,q_0(t)) = \lim_{q\to-\infty}\frac{\partial P}{\partial q}(0,q) = \xi_{\min}.
        \end{equation}
        Thus, we may conclude that 
        \begin{equation}
            \lim_{t\to 0} \xi_1(t) = \lim_{t\to 0} \frac{\partial P}{\partial q}(t,q(t)) = \xi_{\min} \implies \lim_{\xi\to\xi_{\min}}t(\xi) = 0.
        \end{equation}
    \end{remark}       
    \begin{remark}
        Without explicitly using comparability one may determine that, at the least, $\theta$ is not in the interior of the domain of $\xi_2$. If it were the case that $\theta\in\text{Int}(\dom(\xi_2))$, then by analycity of $\xi_2$, we have that $\xi_2(\theta)<\infty$. Further since $q(t)$ is analytic $q(\theta)<\infty$ and as $\xi_2$ is onto $(\xi_0,\xi_{\max})$ we also have that $0<q(\theta)$. Hence $\xi_2(\theta) = \frac{P(\theta,q(\theta))}{q(\theta)} = \infty$, which is a contradiction. \hfill $\lightning$\newline  In this case, we know that $\alpha  = \inf(\dom(\xi_2))\geq\theta$. 
    \end{remark}
    \begin{proposition}\label{prop:limofQ} [c.f. \cite[Proposition 5.1]{fanKhintchineExponentsLyapunov2009}]
        Let $q(\xi)$ be as derived in Theorem \ref{thm:NDThm1}. Suppose that $F$ and $\log\Phi'$ satisfy the comparability condition as in Lemma \ref{lem:Dopen}. Then we have the following limiting behavior of $q$: 
        \[
            \lim_{\xi\to\infty} q(\xi) = 0 \hspace{1cm} \text{and} \hspace{1cm} \lim_{\xi\to\xi_{\min}} q(\xi) = -\infty.
        \]
    \end{proposition}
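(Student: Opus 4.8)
The plan is to treat the two limits separately: $\xi\to\infty$ yields to a direct squeeze, while $\xi\to\xi_{\min}$ needs a compactness-and-contradiction argument built on the boundary behavior of pressure.

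For $\lim_{\xi\to\infty}q(\xi)=0$, I would invoke the explicit description of the Manhattan region obtained inside the proof of Lemma \ref{lem:Dopen}, namely that under comparability $(t,q)\in D$ if and only if $t-\alpha q>\theta$. Since $(t(\xi),q(\xi))\in D_0\subseteq D$, this reads $\alpha q(\xi)<t(\xi)-\theta$. For $\xi>\xi_0$ we have $q(\xi)>0$ by Proposition \ref{prop:sgnOfq} and $\alpha>0$, so $0<q(\xi)<(t(\xi)-\theta)/\alpha$. By Proposition \ref{prop:limofT} we have $t(\xi)\to\theta$ as $\xi\to\infty$, hence the upper bound tends to $0$ and the squeeze theorem forces $q(\xi)\to 0$.

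For $\lim_{\xi\to\xi_{\min}}q(\xi)=-\infty$, I would argue by contradiction, using the second equation of the system, $\frac{\partial P}{\partial q}(t(\xi),q(\xi))=\xi$, so that $\frac{\partial P}{\partial q}(t(\xi),q(\xi))\to\xi_{\min}$. Suppose $q(\xi)$ does not tend to $-\infty$; since $q(\xi)<0$ for $\xi<\xi_0$, there is a sequence $\xi_n\to\xi_{\min}$ with $q(\xi_n)$ bounded below. As $t(\xi_n)\in[0,h]$ and the $q(\xi_n)$ lie in a bounded interval, I pass to a subsequence along which $(t(\xi_n),q(\xi_n))\to(t^*,q^*)$ with $t^*\in[0,h]$ and $q^*$ finite, and then split on whether the limit point lies in $\text{int}(D)$ or on $\partial D$. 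In the interior case, $\frac{\partial P}{\partial q}$ is continuous (indeed real analytic) there, so $\frac{\partial P}{\partial q}(t^*,q^*)=\xi_{\min}$; but $q\mapsto\frac{\partial P}{\partial q}(t^*,q)$ is strictly increasing (strict convexity of $P(t^*,\cdot)$ via Lemma \ref{lem:ND-Cohom}, which guarantees $f_{t,q}$ is not cohomologous to a constant) with $\lim_{q\to-\infty}\frac{\partial P}{\partial q}(t^*,q)=\xi_{\min}$ by Corollary \ref{cor:qderivativelimit}, forcing $\frac{\partial P}{\partial q}(t^*,q^*)>\xi_{\min}$ for the finite value $q^*$, a contradiction.

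The main obstacle is the boundary case $(t^*,q^*)\in\partial D$, where continuity of the derivative is unavailable. Here I would exploit that $D=\{(t,q):t-\alpha q>\theta\}$ is a convex half-plane and that pressure blows up at the boundary. Choosing $q'=q^*-1$ gives $t^*-\alpha q'=\theta+\alpha>\theta$, so $(t^*,q')\in\text{int}(D)$ and $P(t(\xi_n),q')\to P(t^*,q')<\infty$. Convexity of $P(t(\xi_n),\cdot)$ on the segment joining $q'$ to $q(\xi_n)$ (which lies in $D$ for large $n$) then yields
\[
    \frac{\partial P}{\partial q}(t(\xi_n),q(\xi_n))\ge\frac{P(t(\xi_n),q(\xi_n))-P(t(\xi_n),q')}{q(\xi_n)-q'}.
\]
Since $P(t(\xi_n),q(\xi_n))\to\infty$ by Lemma \ref{lem:PDBoundaryLimits} (part 1, using that $D$ is open) while the denominator tends to $1$, the right-hand side tends to $+\infty$, contradicting $\frac{\partial P}{\partial q}(t(\xi_n),q(\xi_n))=\xi_n\to\xi_{\min}<\infty$. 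Both cases being impossible, $q(\xi)\to-\infty$, which completes the proof.
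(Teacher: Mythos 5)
Your proof is correct, and while your treatment of $\xi\to\infty$ coincides with the paper's (both squeeze $q(\xi)$ between $0$ and the Manhattan boundary curve, which under comparability is $q_0(t)=(t-\theta)/\alpha$, and both use Proposition \ref{prop:limofT} to send $t(\xi)\to\theta$), your argument for $\lim_{\xi\to\xi_{\min}}q(\xi)=-\infty$ is genuinely different. The paper disposes of that limit in one line by squeezing $q(\xi)$ against the zero-pressure curve $q_0(t)$ defined by $P(t,q_0(t))=0$ and citing (\ref{eqn:q0lim}), i.e.\ $\lim_{t\to 0}q_0(t)=-\infty$; but that equation lives in Remark \ref{rmk:NonSepShapes}, and to invoke it one needs $t(\xi)\to 0$ as $\xi\to\xi_{\min}$, which the paper establishes only under the additional separation hypotheses of that remark --- in general Proposition \ref{prop:limofT} yields $\lim_{\xi\to\xi_{\min}}t(\xi)=\inf_{\xi_{\min}<\xi\leq\xi_0}\HD(J(\xi))$, which need not vanish. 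Your compactness-and-contradiction scheme --- extracting a convergent subsequence $(t(\xi_n),q(\xi_n))\to(t^*,q^*)$ in $[0,h]\times[-M,0]$ and splitting on whether $(t^*,q^*)$ lies in $D$ or on $\partial D$ --- bypasses this dependence entirely: the interior case is excluded by strict monotonicity of $\partial P/\partial q(t^*,\cdot)$ (strict convexity via Lemma \ref{lem:ND-Cohom}) combined with Corollary \ref{cor:qderivativelimit}, and the boundary case by the secant inequality for the convex function $P(t(\xi_n),\cdot)$ together with the pressure blow-up of Lemma \ref{lem:PDBoundaryLimits}, where the half-plane description of $D$ guarantees that $(t^*,q^*-1)$ sits in the interior so the secant's lower endpoint stays bounded. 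What the paper's route buys is brevity, given that the zero-pressure asymptotics are already recorded; what yours buys is an argument that runs entirely under the hypotheses of Theorem \ref{thm:NDThm1} plus comparability, independent of the separation assumptions of Remark \ref{rmk:NonSepShapes}, and in doing so it actually closes the gap implicit in the paper's own two-line squeeze.
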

    \begin{proof} 
        For $\xi>\xi_0$ we have that $q(\xi)>0$. Since $q(\xi_0) = 0$ we retrieve that $0\leq q(\xi)<q_0(\xi)$ where $(t(\xi),q_0(\xi))\in \partial D$. And so, by Proposition \ref{prop:limofT} and since $(\theta,q_0(\theta)) = (\theta,0)$, we have that $\lim_{\xi\to\infty}q_0(\xi) = 0$. The result then follows by the squeeze theorem. \newline
        The fact that $\lim_{\xi\to\xi_{\min}} q(\xi) = -\infty$ also follows from the squeeze theorem by noting $-\infty<q(\xi)<q_0(\xi)$ and (\ref{eqn:q0lim}).
    \end{proof}
    We now have enough information to arrive at the destination of this section.
    \begin{theorem}\label{thm:tshape}
        The $F$-spectra derived via Theorem \ref{thm:NDThm1} for infinite CGDMSs where $F$ and $\log\Phi'$ are comparable as in Lemma \ref{lem:Dopen} are neither strictly concave nor strictly convex. Thus, there exists a $\gamma\in [\xi_0,\infty)$ such that $t''(\gamma)>0$ and $t''(\xi_0)<0$.  
    \end{theorem}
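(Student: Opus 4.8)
The plan is to reduce everything to the explicit derivative formulas (\ref{eqn:1stDerivativesOfT})--(\ref{eqn:1stDerivativeOfQ}) together with the sign data already assembled for $q(\xi)$, $t'(\xi)$, and the partials of $P$. The statement has two essentially independent halves: the local computation $t''(\xi_0)<0$, which rules out strict convexity, and the global existence of a point where $t''>0$, which rules out strict concavity. I would treat these separately.

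First I would compute $t''(\xi_0)$. By Proposition \ref{prop:sgnOfq} we have $q(\xi_0)=0$, so (\ref{eqn:1stDerivativesOfT}) gives $t'(\xi_0)=q(\xi_0)/\frac{\partial P}{\partial t}=0$. Substituting $t'(\xi_0)=0$ into (\ref{eqn:1stDerivativeOfQ}) yields
\[
    q'(\xi_0)=\frac{1}{\frac{\partial^2 P}{\partial q^2}(t(\xi_0),q(\xi_0))}>0,
\]
the positivity coming from strict convexity of $q\mapsto P(t,q)$, which is available because $f_{t,q}$ is not cohomologous to a constant (Lemma \ref{lem:ND-Cohom}). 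Then $t'(\xi_0)=0$ collapses (\ref{eqn:2ndDerviativeOfT}) to
\[
    t''(\xi_0)=\frac{-q'(\xi_0)^2\,\frac{\partial^2 P}{\partial q^2}(t(\xi_0),q(\xi_0))}{-\frac{\partial P}{\partial t}(t(\xi_0),q(\xi_0))},
\]
whose numerator is strictly negative (product of the strictly positive $q'(\xi_0)^2$ and $\frac{\partial^2 P}{\partial q^2}$) and whose denominator is strictly positive since $\frac{\partial P}{\partial t}<0$ by Lemma \ref{lem:DerivativesOfPressure}. Hence $t''(\xi_0)<0$.

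For the existence of $\gamma\in(\xi_0,\infty)$ with $t''(\gamma)>0$, I would argue by contradiction. Suppose $t''(\xi)\leq 0$ for every $\xi\in[\xi_0,\infty)$, so that $t$ is concave and $t'$ is non-increasing there. Since $t''(\xi_0)<0$, I may fix some $\xi_1>\xi_0$ with $t'(\xi_1)<0$ (consistent with Corollary \ref{prop:sgnOft'}); monotonicity of $t'$ then forces $t'(\xi)\leq t'(\xi_1)<0$ for all $\xi\geq\xi_1$, and integrating gives $t(\xi)\leq t(\xi_1)+t'(\xi_1)(\xi-\xi_1)\to-\infty$ as $\xi\to\infty$. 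This contradicts $t(\xi)\geq 0$, which is forced by $(t(\xi),q(\xi))\in D_0$ (equivalently, the finite limit $\lim_{\xi\to\infty}t(\xi)=\theta$ of Proposition \ref{prop:limofT}). Therefore $t''$ cannot remain non-positive, and since $t''(\xi_0)<0$ the resulting sign change produces the desired $\gamma$; combined with $t''(\xi_0)<0$ this shows $t$ is neither strictly concave nor strictly convex.

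The main obstacle is not the algebra at $\xi_0$ but isolating exactly which global input rules out concavity: the argument hinges on $t$ being bounded below (membership in $D_0$, or the sharper $t(\xi)\to\theta$) while a globally concave $t$ would keep $t'$ below a fixed negative constant and drive $t$ to $-\infty$. I would take care to invoke the comparability hypothesis precisely where Proposition \ref{prop:limofT} is used, and to justify the non-degeneracy $\frac{\partial^2 P}{\partial q^2}>0$ via Lemma \ref{lem:ND-Cohom}, since that single fact both defines $q'(\xi_0)$ and fixes its sign.
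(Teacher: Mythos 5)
Your proposal is correct, and while the first half coincides with the paper's argument, the second half takes a genuinely different route. For $t''(\xi_0)<0$ you do exactly what the paper does (only more explicitly): $q(\xi_0)=0$ forces $t'(\xi_0)=0$ via (\ref{eqn:1stDerivativesOfT}), then (\ref{eqn:1stDerivativeOfQ}) gives $q'(\xi_0)=1/\frac{\partial^2 P}{\partial q^2}\neq 0$, and (\ref{eqn:2ndDerviativeOfT}) collapses to a strictly negative quantity. For the existence of $\gamma$ with $t''(\gamma)>0$, however, the paper argues locally through $q$: since $q(\xi_0)=0$, $q>0$ on $(\xi_0,\infty)$ (Proposition \ref{prop:sgnOfq}), and $q(\xi)\to 0$ as $\xi\to\infty$ (Proposition \ref{prop:limofQ}), there is a point where $q'<0$, and at such a point the positive semi-definiteness of the Hessian $H(t,q)$ yields $t''>0$ via the computation cited from Fan et al. (the key being that $q'<0$ forces $t'\frac{\partial^2 P}{\partial t \partial q}>1$, whence $(t')^2\frac{\partial^2 P}{\partial t^2}\geq \bigl(t'\frac{\partial^2 P}{\partial t\partial q}\bigr)^2\big/\frac{\partial^2 P}{\partial q^2} > (q')^2\frac{\partial^2 P}{\partial q^2}$). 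Your argument is instead global and by contradiction: if $t''\leq 0$ on all of $[\xi_0,\infty)$, then $t'$ is non-increasing, stays below the negative value $t'(\xi_1)$, and drives $t(\xi)$ linearly to $-\infty$, contradicting $t(\xi)\geq 0$ (membership in $D_0$, or $t(\xi)\to\theta$). Your route is more elementary: it needs neither Proposition \ref{prop:limofQ} nor the Hessian computation, and it never uses the second-derivative formula away from $\xi_0$ — which incidentally sidesteps a typo in (\ref{eqn:2ndDerviativeOfT}), where the first term of the numerator should read $t'(\xi)^2\frac{\partial^2 P}{\partial t^2}$ rather than $t'(\xi)\frac{\partial^2 P}{\partial t^2}$ (immaterial at $\xi_0$ where $t'=0$, but essential for the paper's cited computation at $\gamma$). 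What the paper's approach buys in exchange is localization: it identifies the inflection as occurring wherever $q'(\xi)<0$, i.e.\ it ties the convexity change of the spectrum to the turning of $q(\xi)$, which is finer information than bare existence. Both routes use the same underlying nondegeneracy $\frac{\partial^2 P}{\partial q^2}>0$ from Lemma \ref{lem:ND-Cohom}, and both correctly exploit that in the comparable infinite-alphabet setting the domain of $\xi$ is unbounded above, so your use of $\xi\to\infty$ is legitimate exactly where the theorem is stated.
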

    \begin{proof}
        Notice that for $\xi= \xi_0$ we have that $t'(\xi_0) = 0$, and so from (\ref{eqn:2ndDerviativeOfT}) and (\ref{eqn:1stDerivativeOfQ}) $q'(\xi_0)\neq 0$ and so $t''(\xi_0)<0$.\newline 
        Now since $\lim_{\xi\to\infty}q(\xi)=0$, $q(\xi_0) = 0$, and $q(\xi)$ is not identically $0$, there exists $\gamma \in (\xi_0,\infty)$ such that $q'(\xi)<0$. Since $H(t,q)$ is positive semi-definite, the calculation that $t''(\beta)>0$ is identical to that of \cite[Proof of Theorem 1.2 (4), Page 103]{fanKhintchineExponentsLyapunov2009}.
    \end{proof}
    \section{Applications}
    We take care to give concrete examples that demonstrate Theorem \ref{thm:NDThm1} along with many of the results and remarks regarding the possible $F$-spectra shapes here as well. In this section, $\log$ denotes the natural logarithm unless otherwise stated. Our first application, however, is rather general, and will be applied to another example further along in this section.   
        \subsection{Lyapunov Spectrum}
    In this particular setting, we recall from the introduction that we have that $F = -\log\Phi'$. We let $P_L(t,q)$ be our two parameter Lyapunov pressure. We have $P_L(t,q) = P(t-q)$ where $P$ is the standard one parameter Lyapunov pressure which is similar to the discussion in   \cite[Section 6]{fanKhintchineExponentsLyapunov2009}. For completeness, we provide more details on the proof of \cite[Proposition 6.4]{fanKhintchineExponentsLyapunov2009} by mirroring the argument in the exposition in \cite[Chapter 17 Section 3]{falconerFractalGeometryMathematical2014} regarding the geometric interpretation of the Legendre transform. 

\begin{proposition}\label{prop:LyapSpecForm}
    For the Lyapunov spectrum of a CGDMS satisfying the hypotheses of Theorem \ref{thm:NDThm1}, we have that 
    \begin{equation}\label{eqn:LyapunovSpectrumINF}
        t(\xi) = \frac{1}{\xi}(P(-q(\xi))-q(\xi))  = \frac{1}{\xi}\inf_{q\in\RR}\{P(-q)-q\xi\},
    \end{equation}
    where $P(q):= P_L(0,-q)$.
\end{proposition}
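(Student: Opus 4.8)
The plan is to exploit the fact that for the Lyapunov family $F=-\log\Phi'$ the two-parameter pressure collapses to a single parameter. First I would record that $t\log\Phi'+qF=(t-q)\log\Phi'$, so that $P_L(t,q)=P(t-q)$ with $P(\cdot)$ the one-parameter Lyapunov pressure (this is consistent with the normalization $P(q):=P_L(0,-q)$). Substituting this into the system \eqref{sys:MFSys} of Theorem \ref{thm:NDThm1} and using $\frac{\partial}{\partial q}P(t-q)=-P'(t-q)$ (compatible with Lemma \ref{lem:DerivativesOfPressure}, since $\frac{\partial P_L}{\partial q}=-\int\Xip\,d\tmu_{t,q}>0$), the equations defining the solution $(t(\xi),q(\xi))$ become
\begin{equation}
    P(t-q)=q\xi \qquad\text{and}\qquad P'(t-q)=-\xi .
\end{equation}
I would set $s_0:=t(\xi)-q(\xi)$, which by strict convexity of $P$ is the unique point at which $P$ has slope $-\xi$.

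Next I would give the geometric reading of these two equations, mirroring Falconer's discussion of the Legendre transform. The second equation says the tangent line to the graph of $P$ at $s_0$ has slope $-\xi$; writing this line as $L(s)=P(s_0)-\xi(s-s_0)$ and using the first equation $P(s_0)=q(\xi)\xi$ together with $t(\xi)-s_0=q(\xi)$, one checks $L(t(\xi))=q(\xi)\xi-\xi q(\xi)=0$, so the supporting line of slope $-\xi$ meets the horizontal axis exactly at $t(\xi)$. Its vertical intercept is $L(0)=P(s_0)+\xi s_0=\xi\big(q(\xi)+s_0\big)=\xi\,t(\xi)$. The substitution $s=-q$ turns the one-parameter optimization into $\inf_{q\in\RR}\{P(-q)-q\xi\}=\inf_{s\in\RR}\{P(s)+s\xi\}$, a convex problem whose minimizer is precisely $s_0$ (where $P'(s)=-\xi$) and whose minimal value is $P(s_0)+s_0\xi=L(0)=\xi\,t(\xi)$. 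Dividing by $\xi$ then yields both the closed form, obtained by evaluating the transform at its unique minimizer, and the variational form in \eqref{eqn:LyapunovSpectrumINF}.

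The step needing the most care is justifying that this Legendre transform is genuinely attained in the interior and is realized by the system's solution. For this I would invoke strict convexity of $P$, which holds because $\log\Phi'$ is not cohomologous to a constant (Lemma \ref{lem:ND-Cohom} and the subsequent convexity theorem), guaranteeing $P''>0$ and hence a unique slope-$(-\xi)$ tangency; and I would use the boundary blow-up of pressure from Lemma \ref{lem:PDBoundaryLimits} together with cofinite regularity (so that $P(s)\to\infty$ as $s\downarrow\theta$ while $P(s)\to-\infty$ as $s\to\infty$) to make $s\mapsto P(s)+s\xi$ proper and coercive, so the infimum is a genuine minimum attained at an interior $s_0$. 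Finally, matching this unique minimizer against the unique solution of \eqref{sys:MFSys} furnished by Theorem \ref{thm:NDThm1} identifies $s_0=t(\xi)-q(\xi)$ and closes the argument. The only bookkeeping subtlety is that, because $P_L$ depends on $(t,q)$ only through $t-q$, the value $t(\xi)$ must be read off from the axis-intercept of the supporting line rather than from $s_0$ alone.
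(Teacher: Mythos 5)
Your proof is correct and takes essentially the same route as the paper: collapse $P_L(t,q)=P(t-q)$, substitute into the system \eqref{sys:MFSys}, and use strict convexity of the one-parameter pressure to identify the unique point of slope $-\xi$ with the minimizer of $q\mapsto P(-q)-q\xi$, exactly the Falconer-style Legendre-transform reading the paper itself invokes. Your explicit bookkeeping that the minimizer is $-s_0=q(\xi)-t(\xi)$ rather than the theorem's own $q(\xi)$ in fact makes precise the change of variables the paper performs silently when it rewrites the system as $P(-q)=(t+q)\xi$.
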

\begin{proof}
    Note that as discussed above $P_L(t,q) = P(t-q)$ in the Lyapunov spectrum case, and so 
    \[
        \begin{cases} P(t-q,0) = q\xi \\ \frac{\partial P}{\partial q}(t-q,0) = \xi \end{cases} \implies \begin{cases} P(-q,0) = (t+q)\xi \\ \frac{\partial P}{\partial q}(-q,0) = \xi \end{cases}.
    \]
    Now solving the first equation for $t$ and expressing $t$ and $q$ as functions of $\xi$ we have that
    \begin{equation}\label{eqn:Lya1}
        \begin{cases} t(\xi) = \frac{P(-q(\xi),0)}{\xi}-q(\xi) \\ \frac{\partial P}{\partial q}(-q(\xi),0) = \xi \end{cases}.
    \end{equation}
    Hence, we have the first equality in (\ref{eqn:LyapunovSpectrumINF}). To see the second equality, we first note that since $P(-q)$ is strictly convex, so is the function $P(-q)-q\xi$. Thus, the infimum is attained at a unique point $q:=q(\alpha)$. Now since $P(-q) = P_L(0,q)$ 
    \[
        \frac{\partial P_L}{\partial q}(0,q) = -\frac{dP}{dq}(-q).
    \]
    Differentiating $P(-q)-q\xi$ with respect to $q$ and solving for critical points yields
    \[
         -\frac{dP}{dq}(-q) = \xi \implies \frac{\partial P_L}{\partial q}(0,q) = \xi.
    \]
    Yet, per Theorem \ref{thm:NDThm1}, this only occurs when $q = q(\alpha) = q(\xi)$. Hence 
    \begin{equation}\label{eqn:LegendreTransform}
        \inf_{q\in\RR}\{P(-q)-q\xi\} = P(-q(\xi))-q(\xi)\xi.
    \end{equation}
    And so, factoring out $\frac{1}{\xi}$ from the first equation in (\ref{eqn:Lya1}) and applying (\ref{eqn:LegendreTransform}) yields the desired result.
\end{proof}

\subsection{Linearized Gauss Map and Subsystems}
The linearized Gauss map on the unit interval is a dynamical system $S:[0,1]\to [0,1]$ with inverse branches
\[
    \psi_n(x) = -\frac{1}{n(n+1)}x+\frac{1}{n}
\]
for $n\in \NN\setminus \{0\}$. The collection $\Psi = \{\psi_n\}_{n=1}^{\infty}$ gives a CGDMS which has finiteness parameter $\theta = \frac{1}{2}$. We can see that this is the case, as the Lyapunov pressure function is given by 
\[
    P_L(t) = \lim_{n\to\infty}\frac{1}{n}\log\sum_{\om\in E_A^{\infty}}\prod_{j=1}^{|\om|}(\om_j(\om_j+1))^{-t}= \log \sum_{n=1}^{\infty} \left(\frac{1}{n(n+1)}\right)^t,
\]
and for $Z_1(t)$ we have that, for $t>0$,
\[
        \zeta(2t) \geq Z_1(t)\geq \sum_{n=1}^{\infty} (n+1)^{-2t},
\]
where $\zeta$ is the Riemann zeta function. In this case, the invariant measure is the Lebesgue measure, so the calculation of characteristic exponents is more straight forward, however, $P_L(t)$ is not quickly calculable. For this reason, we leave a concrete example of an infinite CGDMS Lyapunov spectrum calculation to the next subsection of examples and we proceed here with two spectra on finite subsystems for $F$-exponents that highlight Remark \ref{rmk:NonSepShapes} and Proposition \ref{prop:limofT}. These examples are similar, but they highlight the possibility that $\xi_{\min}$, $\xi_{\max}$ or both may have associated measures supported on compact subshifts of $E_A^{\infty}$ that are more complex than just the orbit of a periodic point or a countable collection of such points.

\begin{figure}[h]
    \centering
    \includegraphics{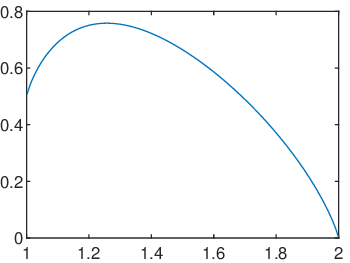}
    \caption{Graph of Numerical Solution of $t(\xi)$ for Example \ref{ex:LG3_Parity} via MATLAB}
    \label{fig:LG3_Parity}
\end{figure}

\begin{example} \label{ex:LG3_Parity}
    Let $\AL\subset E$ with $\AL=\{1,2,3\}$. We take $F = \{f_1,f_2,f_3\}$ with $f_1=f_3 = \log(e)=1$ and $f_2 = 2\log(e)=2$, where $e=2.7182...$ is Euler's number and not a symbol of the alphabet in this case. This is a strictly positive family that is a translation of the ``parity family", a collection of functions which distinguish odd symbols with $0$ from even symbols with $1$. By Lemma \ref{lem:SpectrumIsTranslationOfFamilyInvariant}, we have that the spectrum resulting from $F$ is just a translation of that of the parity family, and so, the two decompositions are isomorphic. In particular, the resulting partitions of the limit set are identical. Since the amalgamated function values only depend on the first coordinate, we have that 
    \[
        P(t,q) = \log Z_1(t,q) = \log\left(\frac{e^q}{2^t}+\frac{e^{2q}}{6^t}+\frac{e^{q}}{12^t}\right),
    \]
    and
    \begin{equation}\label{eqn:exampleDP1}
        \frac{\partial P}{\partial q}(t,q)=  \frac{(2^{-t}+12^{-t})e^q+2\cdot 6^{-t}e^{2q}}{(2^{-t}+12^{-t})e^q+6^{-t}e^{2q}} = \xi
    \end{equation}
    with
    \[
        \xi_{\min} = \lim_{q\to-\infty} \frac{\partial P}{\partial q}(t,q) = 1 \hspace{1cm} \text{and} \hspace{1cm}  \xi_{\max} = \lim_{q\to\infty} \frac{\partial P}{\partial q}(t,q) = 2.
    \]
    Solving (\ref{eqn:exampleDP1}) for $q$ yields
    \[
        q(t,\xi) = \log\left(\frac{(\xi-1)(2^{-t}+12^{-t})}{6^{-t}(2-\xi)}\right).
    \]
    The function $t(\xi)$ can be approximated by splitting the interval $(\xi_{\min},\xi_{\max})$ into $1000$ uniform sub-intervals and then solving the equation $P(t,q(t,\xi))-\xi\cdot q(t,\xi)$ at endpoints. The graph of this approximation of $t(\xi)$ is Figure \ref{fig:LG3_Parity}.
    
\end{example}

    \begin{figure}[h]
    \centering
    \includegraphics{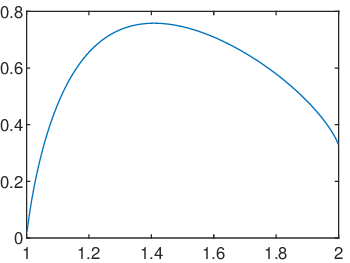}
    \caption{Graph of Numerical Solution of $t(\xi)$ for Example \ref{ex:LG3_23} via MATLAB}
    \label{fig:LG3_23}
    \end{figure}

\begin{example}\label{ex:LG3_23}
    Similar to the previous example, let $\AL\subset E$ with $\AL=\{1,2,3\}$ and instead take $F = \{f_1,f_2,f_3\}$ with $f_2=f_3 = 2\log(e)=2$ and $f_1 = \log(e)=1$,  where again $e=2.7182...$ is Euler's number and not a symbol of the alphabet.  Since the amalgamated function values only depend on the first coordinate, we have that 
    \[
        P(t,q) = \log Z_1(t,q) = \log\left(\frac{e^q}{2^t}+\frac{e^{2q}}{6^t}+\frac{e^{2q}}{12^t}\right),
    \]
    and
    \begin{equation}\label{eqn:exampleDP_2}
        \frac{\partial P}{\partial q}(t,q)=  \frac{2^{-t}e^q+2\cdot(12^{-t}+ 6^{-t})e^{2q}}{2^{-t}e^{q}+(6^{-t}+12^{-t})e^{2q}} = \xi
    \end{equation}
    with
    \[
        \xi_{\min} = \lim_{q\to-\infty} \frac{\partial P}{\partial q}(t,q) = 1 \hspace{1cm} \text{and} \hspace{1cm}  \xi_{\max} = \lim_{q\to\infty} \frac{\partial P}{\partial q}(t,q) = 2.
    \]
    Solving (\ref{eqn:exampleDP_2}) for $q$ yields
    \[
        q(t,\xi) = \log\left(\frac{(\xi-1)\cdot 2^{-t}}{(6^{-t}+12^{-t})(2-\xi)}\right).
    \]
    The function $t(\xi)$ can be approximated by splitting the interval $(\xi_{\min},\xi_{\max})$ into $1000$ uniform sub-intervals and then solving the equation $P(t,q(t,\xi))-\xi\cdot q(t,\xi)$ at endpoints. The graph of this approximation of $t(\xi)$ is Figure \ref{fig:LG3_23}.
    
\end{example}

\subsection{L\"uroth Expansions}
Our multifractal analysis also generalizes that of \cite{barreiraFrequencyDigitsLuroth2009} concerning the standard L\"uroth Expansion. The standard L\"uroth expansion was introduced in 1883 by L\"uroth \cite{lurothUeberEindeutigeEntwickelung1883} and has a collection of inverse branches that when reflected with respect to the line $y=1/2$ become the inverse branches of the linearlized Gauss map. In general, take a strictly decreasing sequence $P=\{a_k\}_{k=0}^{\infty}$ with $a_0 = 1$ and $\lim_{k\to\infty} a_k = 0$. Then a CGDMS $\Phi = \{\phi_n\}_{n=1}^{\infty}$ may be defined by taking inverse branches of the dynamical system $S_P:[0,1]\to[0,1]$ which is defined by the branches
\[
        \phi^{-1}_n(x) = \frac{x}{a_{n-1}-a_n}-\frac{a_n}{a_{n-1}-a_{n}} \text{ with } \phi^{-1}_n:[a_{n},a_{n-1}]\to[0,1] \text{ onto }.
\]
\begin{figure}[h]
    \centering
    \includegraphics{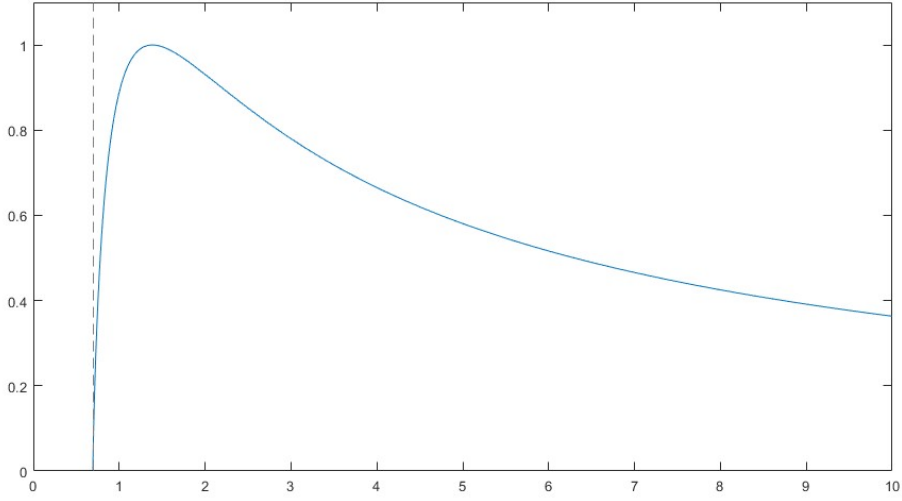}
    \caption{Graph of $t(\xi)$ for Example \ref{ex:Lu} for $\xi\in(\log 2,10))$}
    \label{fig:Lu0To10}
    \end{figure}
\begin{example}\label{ex:Lu}
    Let $\{\frac{1}{2^n}\}_{n=0}^{\infty}$ define a L\"uroth expansion. Then the geometric potential is given by $\Xip\circ\pi^{-1}(x) = -\om(x)_1\log (2)$ for any $x\in \pi(J\setminus J_{>1})$ and has Lebesgue integral given by 
    \[
        \sum_{n=1}^{\infty}\frac{-n\log2}{2^n} = -2\log 2 \approx 1.38629.
    \]
    Thus, Theorem \ref{thm:NDThm1} applies for any $F$ for which $\tilde{f}\circ\pi^{-1}$ is integrable with respect to Lebesgue measure and comparable to $\log\Phi'$. Notice that for the positive Lyapunov family $F = -\log \Phi'$ in this example we have that  
    \[
        Z_1(t,q) = \sum_{n=1}^{\infty} 2^{-n(t-q)}
    \]
    and $Z_1(t,q)$ is finite for $t-q>0$, which implies that $\theta = 0$ here. We also have that 
    \[
        P_L(t,q) = \lim_{n\to\infty}\frac{1}{n}\log\sum_{\om\in E_A^n}\prod_{j=1}^{|\om|}2^{\om_j(q-t)}  = \lim_{n\to\infty} \frac{1}{n} \log\left(\frac{2^{q-t}}{1-2^{q-t}}\right)^n = \log(2^{q-t})-\log(1-2^{q-t}) 
    \]
    and that 
    \[
        \frac{\partial P_L}{\partial q}(t,q) = \frac{2^t\log 2}{2^t-2^q}.
    \]
    Since $\xi_0 = 2\log 2$, we find that the Bowen parameter $h = t(\xi_0) =  1$ since this choice of $h$ satisfies the system of equations in Theorem $\ref{thm:NDThm1}$. From Corollary \ref{cor:qderivativelimit} we have that $\xi_{min} = \log 2$ as well. Now let $T(q) = P_L(0,q)-q\xi = P(-q)-q\xi$. Then,
    \[
        T'(q) = 0 \implies \frac{\log 2}{1-2^q}-\xi = 0 \implies q = \log_2\left(1-\frac{\log 2}{\xi}\right),
    \]
    and
    \[
        T''(q) = \left(\frac{\log 2}{1-2^q}-\xi\right)' = \frac{2^q\log^2(2)}{(1-2^q)^2}>0.
    \]
    Hence by Proposition \ref{prop:LyapSpecForm} and the $2^{nd}$ derivative test from calculus, we have that 
    \begin{align}
        t(\xi) = \frac{1}{\xi}\inf_{q\in\RR}\{P(-q)-q\xi\} &= \frac{1}{\xi}T\left(\log_2\left(1-\frac{\log 2}{\xi}\right) \right) \notag \\&= \frac{1}{\xi}\left(\log\left(1-\frac{\log(2)}{\xi}\right)-\log\left(\frac{\log(2)}{\xi}\right)\right)-\log_2\left(1-\frac{\log 2}{\xi}\right)\\
        &= \frac{1}{\xi}\log\left(\frac{\xi}{\log(2)}-1\right)-\log_2\left(1-\frac{\log 2}{\xi}\right).
    \end{align}

    \noindent The graph of $t(\xi)$ is shown in Figure \ref{fig:Lu0To10} and in the introduction of the paper in Figure \ref{fig:Lu0To101}

\end{example}

\section*{Acknowledgements}

I would like to thank my PhD Advisor Mariusz Urba\'nski for his consistent feedback and criticism and Pieter Allaart for his comments. Their feedback has undoubtedly strengthened the overall scope of the paper and its results. I would also like to thank Johannes Jaerisch and Vasilis Chousioinis for allotting time for me to talk through the results of the paper with them individually and for their feedback on potential future directions of this project. A special thanks to Jason Atnip as well for their expeditious feedback on an initial draft of the paper. This paper is a portion of the authors upcoming PhD Dissertation.

\bibliographystyle{plain}
\bibliography{NDD}

\begin{thebibliography}{10}

\bibitem{barreiraFrequencyDigitsLuroth2009}
L.~Barreira and G.~Iommi.
\newblock Frequency of digits in the {{Lüroth}} expansion.
\newblock {\em J. Number Theory}, 129(6):1479--1490, June 2009.

\bibitem{besicovitchSumDigitsReal1935}
A.~S. Besicovitch.
\newblock On the sum of digits of real numbers represented in the dyadic system.
\newblock {\em Math. Ann.}, 110(1):321--330, December 1935.

\bibitem{billingsleyWeakConvergenceMetric1968}
P.~Billingsley.
\newblock {\em Weak {{Convergence}} in {{Metric Spaces}}}.
\newblock John Wiley \& Sons, Ltd, New York, 1968.

\bibitem{birkhoffProofErgodicTheorem1931}
G.~D. Birkhoff.
\newblock Proof of the {{Ergodic Theorem}}.
\newblock {\em Proc. Natl. Acad. Sci. USA}, 17(12):656--660, 1931.

\bibitem{bissacotExistenceMaximizingMeasures2014}
Rodrigo Bissacot and Ricardo Dos Santos~Freire.
\newblock On the existence of maximizing measures for irreducible countable {{Markov}} shifts: A dynamical proof.
\newblock {\em Ergodic Theory Dynam. Systems,}, 34(4):1103--1115, August 2014.

\bibitem{falconerFractalGeometryMathematical2014}
K.~Falconer.
\newblock {\em Fractal {{Geometry}}: {{Mathematical Foundations}} and {{Applications}}}.
\newblock John Wiley \& Sons, Ltd, New York, 2014.

\bibitem{fanFrequencyPartialQuotients2010}
A.-H. Fan, L.-M. Liao, and J.-H. Ma.
\newblock On the frequency of partial quotients of regular continued fractions.
\newblock {\em Math. Proc. Cambridge Philos. Soc.}, 148(1):179--192, January 2010.

\bibitem{fanKhintchineExponentsLyapunov2009}
A.-H. Fan, L.-M. Liao, B.-W. Wang, and J.~Wu.
\newblock On {{Khintchine}} exponents and {{Lyapunov}} exponents of continued fractions.
\newblock {\em Ergodic Theory Dynam. Systems}, 29(1):73--109, February 2009.

\bibitem{feketeUeberVerteilungWurzeln1923}
M.~Fekete.
\newblock {{\"U}ber die Verteilung der Wurzeln bei gewissen algebraischen Gleichungen mit ganzzahligen Koeffizienten}.
\newblock {\em Math. Z.}, 17(1):228--249, December 1923.

\bibitem{freireEquilibriumStatesZero2018}
R.~Freire and V.~Vargas.
\newblock Equilibrium states and zero temperature limit on topologically transitive countable {{Markov}} shifts.
\newblock {\em Trans. Amer. Math. Soc.}, 370(12):8451--8465, July 2018.

\bibitem{ghenciuBowenFormulaShiftGenerated2015}
A.~E. Ghenciu and M.~Roy.
\newblock Bowen’s {{Formula}} for {{Shift-Generated Finite Conformal Constructions}}.
\newblock {\em Real Anal. Exchange}, 40(1):99--112, January 2015.

\bibitem{hanusThermodynamicFormalismMultifractal2002}
P.~Hanus, R.~D. Mauldin, and M.~Urbański.
\newblock Thermodynamic {{Formalism}} and {{Multifractal Analysis}} of {{Conformal Infinite Iterated Function Systems}}.
\newblock {\em Acta Math. Hungar.}, 96(1):27--98, July 2002.

\bibitem{iommiMultifractalAnalysisBirkhoff2015}
G.~Iommi and T.~Jordan.
\newblock Multifractal analysis of {{Birkhoff}} averages for countable {{Markov}} maps.
\newblock {\em Ergodic Theory Dynam. Systems}, 35(8):2559--2586, December 2015.

\bibitem{jenkinsonZeroTemperatureLimits2005}
O.~Jenkinson, R.~D. Mauldin, and M.~Urbański.
\newblock Zero {{Temperature Limits}} of {{Gibbs-Equilibrium States}} for {{Countable Alphabet Subshifts}} of {{Finite Type}}.
\newblock {\em J. Stat. Phys.}, 119(3):765--776, May 2005.

\bibitem{johanssonMultifractalAnalysisNonuniformly2010}
A.~Johansson, T.~M. Jordan, A.~Öberg, and M.~Pollicott.
\newblock Multifractal analysis of non-uniformly hyperbolic systems.
\newblock {\em Israel J. Math.}, 177(1):125--144, June 2010.

\bibitem{katokIntroductionModernTheory1995}
A.~Katok and B.~Hasselblatt.
\newblock {\em Introduction to the {{Modern Theory}} of {{Dynamical Systems}}}.
\newblock Encyclopedia of {{Mathematics}} and Its {{Applications}}. Cambridge University Press, Cambridge, 1995.

\bibitem{kessebohmerMultifractalAnalysisSternBrocot2007}
M.~Kesseböhmer and B.~O. Stratmann.
\newblock A multifractal analysis for {{Stern-Brocot}} intervals, continued fractions and {{Diophantine}} growth rates.
\newblock {\em J. Reine Angew. Math.}, 2007(605):133--163, April 2007.

\bibitem{lurothUeberEindeutigeEntwickelung1883}
J.~Lüroth.
\newblock {Ueber eine eindeutige Entwickelung von Zahlen in eine unendliche Reihe}.
\newblock {\em Math. Ann.}, 21:411--423, 1883.

\bibitem{mauldinGraphDirectedMarkov2003}
R.~D. Mauldin and M.~Urbanski.
\newblock {\em Graph {{Directed Markov Systems}}: {{Geometry}} and {{Dynamics}} of {{Limit Sets}}}.
\newblock Cambridge {{Tracts}} in {{Mathematics}}. Cambridge University Press, Cambridge, 2003.

\bibitem{mauldinDimensionsMeasuresInfinite1996}
R.~D. Mauldin and M.~Urbański.
\newblock Dimensions and {{Measures}} in {{Infinite Iterated Function Systems}}.
\newblock {\em Proc. Lond. Math. Soc. (3)}, s3-73(1):105--154, July 1996.

\bibitem{pesinMultifractalAnalysisBirkhoff2001}
Y.~Pesin and H.~Weiss.
\newblock The multifractal analysis of {{Birkhoff}} averages and large deviations.
\newblock {\em Global analysis of dynamical systems}, 419431, June 2001.

\bibitem{pollicottWeilPeterssonMetrics2016}
M.~Pollicott and R.~Sharp.
\newblock Weil–{{Petersson}} metrics, {{Manhattan}} curves and {{Hausdorff}} dimension.
\newblock {\em Math. Z.}, 282(3):1007--1016, April 2016.

\bibitem{rushMultifractalAnalysisMarkov2023}
T.~Rush.
\newblock Multifractal analysis for {{Markov}} interval maps with countably many branches.
\newblock {\em Nonlinearity}, 36(4):2038, February 2023.

\bibitem{urbanskiVolumeErgodicTheory2021}
M.~Urba{\'n}ski, M.~Roy, and S.~Munday.
\newblock {\em Volume 1 {{Ergodic Theory}} -- {{Finite}} and {{Infinite}}, {{Thermodynamic Formalism}}, {{Symbolic Dynamics}} and {{Distance Expanding Maps}}}.
\newblock De Gruyter, Boston, November 2021.

\bibitem{urbanskiVolumeFinerThermodynamic2022}
M.~Urba{\'n}ski, M.~Roy, and S.~Munday.
\newblock {\em Volume 2 {{Finer Thermodynamic Formalism}} -- {{Distance Expanding Maps}} and {{Countable State Subshifts}} of {{Finite Type}}, {{Conformal GDMSs}}, {{Lasota-Yorke Maps}} and {{Fractal Geometry}}}.
\newblock De Gruyter, Boston, June 2022.

\end{thebibliography}
\end{document}